\newtheorem{thm}{Theorem}[section]
\newtheorem{rmk}[thm]{Remark}
\newtheorem{prop}[thm]{Proposition}
\newtheorem{cor}{Corollary}[thm]
\newtheorem{lema}[thm]{Lemma}
\newtheorem{defi}[thm]{Definition}
\title[Stability of pencils of plane sextics]{Stability of pencils of plane sextics and Halphen pencils of index two}
\author{Aline Zanardini}
\address{Department of Mathematics, University of Pennsylvania, USA}
\email{alinez@math.upenn.edu}
\date{\today}
\begin{document}

\begin{abstract}
We study the stability of pencils of plane sextics in the sense of geometric invariant theory. In particular, we obtain a complete and geometric description of the stability of Halphen pencils of index two.
\end{abstract}

\maketitle

\tableofcontents

\section{Introduction}

In this paper we study the stability of pencils of plane curves of degree six under projective equivalence in the sense of geometric invariant theory (GIT). Building in the results we obtained in \cite{azconstr} and \cite{azstabd}, we completely describe the stability of Halphen pencils of index two -- classical geometric objects first introduced by Halphen in 1882 \cite{halphen}. These are pencils of plane curves of degree six with exactly nine base points (possibly infinitely near) of multiplicity two (Definition \ref{defHalphen}).

More generally, a Halphen pencil (of index $m$) corresponds to a rational surface $Y$ that admits a fibration $f: Y \to \mathbb{P}^1$ with generic fiber a smooth curve of genus one (see e.g. \cite{dc}*{Chapter V, \S 6}). These surfaces are called rational elliptic surfaces (of index $m$), and the correspondence is given by the blow-up of $\mathbb{P}^2$ at the nine base points of the pencil. They have exactly one multiple fiber of multiplicity $m$, and $f$ does not have a section if and only if $m>1$. In this paper we are interested in the case $m=2$.

We follow an approach similar to that of Miranda in \cite{stab}, and we provide a complete and geometric characterization of the stability of Halphen pencils of index two in terms of the types of singular fibers appearing in the associated rational elliptic surfaces (of index two). In \cite{stab}, Miranda describes the stability of pencils of plane cubics up to projective equivalence, which leads to a compactification of the moduli space of rational elliptic surfaces with section. Such compactification agrees with the one obtained by the same author in \cite{mirW}, where the surfaces are described by their Weierstrass models.  When the existence of a global section is not assumed, there is no analogue for the Weierstrass model and even the dimensions of the parameter spaces involved are much higher, which makes describing the stability conditions a much harder problem. The results we obtained in \cite{azconstr} and \cite{azstabd}  have helped us to overcome such difficulties. 
 
The main results of this paper are given by Theorems \ref{m1}, \ref{m2} and \ref{m3} below, where $\mathcal{P}$ denotes a Halphen pencil of index two and $Y$ denotes the associated rational elliptic surface. We write the pencil $\mathcal{P}$ as $\lambda B + \mu (2C)=0$, where $C$ is the unique cubic through the nine base points of $\mathcal{P}$ and the curve $B$ corresponds to some (non-multiple) fiber, which we denote by $F$.

\begin{thm}
When $C$ is smooth the pencil $\mathcal{P}$ is stable if and only if one of the following statements hold
\begin{enumerate}[(i)]
\item all fibers of $Y$ are reduced 
\item $Y$ contains at most one non-reduced fiber $F$ of type $I_n^*$ or $IV^*$ 
\item there exists exactly one (non-multiple) fiber $F$ in $Y$ such $lct(Y,F)\leq 1/4$ and $B$ is semistable 
\item $Y$ contains two fibers of type $I_0^*$ and there is no one-parameter subgroup $\lambda$ that destabilizes the two corresponding curves simultaneously.
\end{enumerate}
\label{m1}
\end{thm}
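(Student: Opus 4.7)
The plan is to apply the Hilbert--Mumford numerical criterion to the pencil $\mathcal{P}$ viewed as a point in the Grassmannian $\mathrm{Gr}(2, H^0(\mathbb{P}^2, \mathcal{O}(6)))$: stability is equivalent to $\mu(\mathcal{P}, \lambda) < 0$ for every non-trivial one-parameter subgroup $\lambda$ of $\mathrm{SL}_3$. Since $C$ is smooth, I would first place $C$ in a canonical Hesse normal form and use this to reduce the 1-PS we must test to diagonal ones in coordinates adapted to $C$, in the same spirit as Miranda's treatment of pencils of plane cubics in \cite{stab}. The weight on the Plücker line of $\mathcal{P}$ then decomposes as the sum of the weight contributed by $2C$ and the lowest weight attained by any sextic of the form $\alpha B + \beta(2C)$ in the pencil.

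With this setup, the results of \cite{azconstr} and \cite{azstabd} translate the numerical conditions on $B$ (and on its combinations with $2C$) into singularity statements, which via the blow-up construction $Y \to \mathbb{P}^2$ correspond to the types of singular fibers of the associated rational elliptic surface. I would then handle the ``if'' direction by checking, case by case, that $\mu(\mathcal{P}, \lambda) < 0$ for every admissible $\lambda$: in (i), reduced fibers force $B$ to have singularities mild enough that $B$ itself is semistable and the contribution of $2C$ is strictly negative; in (ii), the unique non-reduced fiber of type $I_n^*$ or $IV^*$ is exactly on the borderline where $2C$ still compensates; in (iii), the inequality $\mathrm{lct}(Y,F) \le 1/4$ is the precise numerical translation of the weight bound when combined with the semistability of $B$; in (iv), the explicit hypothesis rules out any common destabilizing $\lambda$.

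For the ``only if'' direction, I would argue contrapositively: if none of (i)--(iv) holds, I exhibit a destabilizing 1-PS. Either there is a non-reduced fiber of type worse than $I_n^*$ or $IV^*$ (whose singularities force a negative weight computation on $B$ that $2C$ cannot compensate), or there are multiple non-reduced fibers of the allowed types that admit a simultaneously destabilizing $\lambda$, or $B$ itself is unstable in the sense of \cite{azstabd} even while $\mathrm{lct}(Y,F) > 1/4$.

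The main obstacle I expect is case (iv). When $Y$ carries two fibers of type $I_0^*$, the corresponding sextics in the pencil each admit their own family of destabilizing 1-PS, and the subtle issue is to determine when these two families share a common element. The analysis requires a careful study of the compatibility of the two destabilizing weight filtrations in a single coordinate system adapted to the smooth cubic $C$, and it is precisely this compatibility criterion that the statement of (iv) records. Enumerating the possible relative configurations of the two $I_0^*$ fibers and ruling out all but the stated exception is, I expect, the technical heart of the proof.
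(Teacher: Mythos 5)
Your general framework (Hilbert--Mumford on the Grassmannian, Pl\"{u}cker weights, following Miranda) matches the paper's, but there are two genuine problems. First, the proposed reduction to diagonal one-parameter subgroups ``in coordinates adapted to $C$'' via the Hesse normal form is not valid: the numerical criterion requires testing every one-parameter subgroup, and diagonalizing an arbitrary $\lambda$ forces a coordinate system in which $C$ will generally \emph{not} be in Hesse form. One must either quantify over all coordinate systems (as the paper does in Theorems \ref{unstable2} and \ref{notstable2}) or control the coordinates some other way; fixing coordinates by $C$ first and then testing only diagonal $\lambda$ in those coordinates tests too few subgroups and could certify stability falsely. Second, the actual content of the equivalence is missing. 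The paper does not prove Theorem \ref{m1} by a direct weight computation on $\mathcal{P}$; it reduces the stability of the pencil to the stability of its individual members using the general results of \cite{azstabd} (a pencil with at most one strictly semistable member and all others stable is stable; a pencil with two semistable members is non-stable iff they admit a simultaneous destabilizing $\lambda$; a non-stable pencil must contain a member $B$ with $lct(\mathbb{P}^2,B)\leq 1/2$ because $lct(\mathbb{P}^2,2C)=1/2$), and then converts these conditions into fiber types via the log canonical threshold dictionary and the explicit classification, from \cite{azconstr}, of the plane sextics realizing fibers of type $II^*$, $III^*$, $IV^*$ and $I_n^*$. Your sketch gestures at this translation (``the results of \cite{azconstr} and \cite{azstabd} translate the numerical conditions into singularity statements'') but supplies none of it, and the case-by-case claims in your ``if'' direction are assertions rather than arguments.

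Relatedly, you have misplaced the difficulty: case (iv) is in fact the easy one --- the curves corresponding to $I_0^*$ fibers have log canonical threshold at least $1/2$, hence are semistable, and the claim follows at once from the two-semistable-members criterion of \cite{azstabd} (this is Theorem \ref{twoi0star} in the paper). The technical heart is the enumeration of which unstable sextics $B$ can occur as $\pi(F)$ for $F$ of type $II^*$, $III^*$ or $IV^*$, together with the verification, via the explicit equations and the Pl\"{u}cker vanishing conditions of Sections \ref{sc}--\ref{gd}, of whether $(B,2C)$ admits a common destabilizing subgroup. Without that analysis neither the ``only if'' direction nor the sharp form of conditions (ii) and (iii) can be established.
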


\begin{thm}
When $C$ is singular the pencil $\mathcal{P}$ is stable if and only if  one of the following statements hold
\begin{enumerate}[(i)]
\item all fibers of $Y$ are reduced 
\item $\mathcal{P}$ contains at worst two strictly semistable curves and there is no one-parameter subgroup $\lambda$ that destabilizes these two curves simultaneously
\item $Y$ contains a fiber of type $IV^*$ and $B$ is unstable
\end{enumerate}
\label{m2}
\end{thm}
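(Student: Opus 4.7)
The plan is to apply the Hilbert--Mumford numerical criterion to the pencil $\mathcal{P}=\langle B,2C\rangle$, viewed as a point in the Grassmannian $\operatorname{Gr}(2,H^0(\mathbb{P}^2,\mathcal{O}(6)))$, exactly as in \cite{azconstr} and \cite{azstabd}. For a non-trivial one-parameter subgroup $\lambda$ of $\operatorname{SL}_3$, the weight $\mu(\mathcal{P},\lambda)$ equals the sum of the two lowest pivot weights of $\mathcal{P}$ in a $\lambda$-diagonalizing basis of $H^0(\mathbb{P}^2,\mathcal{O}(6))$. Because $2C$ is a fixed member and $C$ is singular, $2C$ is a rather degenerate sextic whose weight polytope sits far from the origin; one pivot of $\mathcal{P}$ will typically come from $2C$ and the other from $B$, and the proof splits along these lines.

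For the forward direction I would argue by contraposition, assuming that none of (i), (ii), (iii) holds and exhibiting a destabilizing $\lambda$. If $Y$ contains a non-reduced fiber $F\neq 2C$ not covered by (ii) or (iii), the classification of (semi)stable sextics in \cite{azstabd} furnishes a $\lambda$ giving $\mu(F,\lambda)\le 0$ and $\mu(2C,\lambda)\le 0$ simultaneously; such a $\lambda$ forces $\mu(\mathcal{P},\lambda)\le 0$. The existence of three or more strictly semistable members, or of a pair of such members admitting a common destabilizing $\lambda$, is handled similarly by extracting two pivots of negative total weight. Finally, the $IV^*$ case in (iii) is precisely the configuration in which a $\lambda$ destabilizing $B$ alone cannot be promoted to a destabilizer of the pencil; ruling out the other $IV^*$ configurations is what forces the clause ``$B$ is unstable'' in (iii).

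For the converse, case (i) follows immediately from \cite{azstabd}: every member of $\mathcal{P}$ is a (semi)stable sextic and a generic member is stable, so $\mu(\mathcal{P},\lambda)>0$ for every $\lambda$. In case (ii) one observes that the hypothesis that no single $\lambda$ destabilizes both strictly semistable curves means that for each $\lambda$ at least one of the two pivots of $\mathcal{P}$ lands in the strictly stable region of its weight polytope, so the sum of the two pivot weights is positive. Case (iii) is the subtle one: here $B$ is itself unstable as a sextic, yet one must show $\mathcal{P}$ is stable. The geometric content is that the presence of the $IV^*$ fiber rigidifies the joint configuration of $B$ and the singular cubic $C$, so that any $\lambda$ with $\mu(B,\lambda)<0$ necessarily produces a large positive $\mu(2C,\lambda)$, and the two contributions combine to give $\mu(\mathcal{P},\lambda)>0$.

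The main obstacle I anticipate is the sufficiency in case (iii). Unlike (i) and (ii), one has to accept that a genuinely unstable sextic lies in a stable pencil and rule out destabilization by the one-parameter subgroups geometrically adapted to $B$. I expect the argument to proceed by placing $C$ and the $IV^*$ fiber in an explicit normal form, computing the weight polytopes of $2C$ and of $B$ in those coordinates, and checking that no $\lambda\in\operatorname{SL}_3$ can push both polytopes into the non-positive half-space at once; the interaction with the particular singularity type of $C$ is what makes this bookkeeping delicate. A secondary technical difficulty will be the verification in case (ii) that the combinatorial condition ``no simultaneous destabilizing $\lambda$'' really captures all potential obstructions, a step that in Miranda's pencils of cubics \cite{stab} is essentially automatic but in the sextic setting requires a careful enumeration of the semistable polytopes classified in \cite{azstabd}.
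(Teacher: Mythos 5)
Your outline matches the paper's overall strategy --- Hilbert--Mumford weights computed through the Pl\"{u}cker embedding, with the pencil's weight splitting as the sum of a ``generic'' and a ``special'' pivot --- and you correctly single out case (iii) as the delicate point and anticipate the right method for it (normal forms for the $IV^*$ configurations and a check that no $\lambda$ can make both pivots have the destabilizing sign). But as it stands the proposal has two genuine gaps. First, the entire forward implication (stable $\Rightarrow$ (i), (ii) or (iii)) is asserted rather than argued. Saying that ``the classification of (semi)stable sextics furnishes a $\lambda$ with $\mu(F,\lambda)\le 0$ and $\mu(2C,\lambda)\le 0$ simultaneously'' begs the question: a $\lambda$ that destabilizes $B$ and $2C$ separately need not destabilize the pencil, because the two pivot weights depend on the \emph{joint} vanishing pattern of the two forms (equivalently, on the base points and the tangencies between $B$ and $C$), not on their individual orbits. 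The paper has to prove non-stability configuration by configuration: it first classifies which plane sextics can realize each non-reduced Kodaira type (\cite{azconstr}, Theorems 5.15--5.16), and then verifies the explicit Pl\"{u}cker vanishing conditions of Theorems \ref{notstable2}, \ref{gennotstable} and \ref{nsreg} for each resulting pair $(B,2C)$ --- e.g.\ the seven unstable $III^*$ models, each of which must be shown to make $\mathcal{P}$ non-stable when $C$ is singular. None of this is replaceable by a one-line appeal to the classification of semistable sextics.

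Second, your converse for (i) rests on the unproved claim that when all fibers of $Y$ are reduced every member of $\mathcal{P}$ is semistable with generic member stable. The paper obtains this from log canonical thresholds: $lct(\mathbb{P}^2,2C)=1/2$, $lct(\mathbb{P}^2,B)>1/2$ for $B$ coming from a reduced fiber (\cite{azconstr}), and the comparison between $lct$ and GIT stability of plane sextics from \cite{hacking} and \cite{kimlee}; the necessary condition ``not stable $\Rightarrow$ non-reduced fiber'' (Theorem \ref{Hnotstable}) is then an application of \cite{azstabd}*{Theorem 1.1} with $\alpha=1/2$, not a pointwise semistability count. This lct machinery is the engine behind both directions of the theorem and is absent from your plan; the same comment applies to case (ii), where the reduction to ``two strictly semistable members, all others stable'' and the appeal to \cite{azstabd}*{Theorem 1.6} need the lct input to identify which members are strictly semistable. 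Finally, in (iii) the mechanism is not that ``$\mu(B,\lambda)<0$ forces a large positive $\mu(2C,\lambda)$'': the paper shows that the unstable sextics $B$ compatible with a $IV^*$ fiber (a double line plus a nodal cubic plus a line, two double lines and a conic, or a curve containing a triple line) violate the necessary intersection-theoretic conditions for non-stability of the pencil recorded in Lemmas \ref{nstripleline} and \ref{nsdoubleline}, which are statements about where $L$ meets $C$ and the residual curve at the base points. Your plan would become a proof only after supplying these classification and base-locus inputs.
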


\begin{thm}
The pencil $\mathcal{P}$ is semistable if and only if every curve in $\mathcal{P}$ is semistable or $Y$ does not contain a fiber $F$ of type $II^*$.
\label{m3}
\end{thm}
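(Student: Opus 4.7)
The plan is to apply the Hilbert--Mumford numerical criterion to the pencil $\mathcal{P}$ viewed as a point in the Grassmannian $\mathrm{Gr}(2, V)$ (or its Pl\"ucker embedding), where $V = H^0(\mathbb{P}^2, \mathcal{O}_{\mathbb{P}^2}(6))$. For a one-parameter subgroup $\rho: \mathbb{G}_m \to SL(3)$ diagonalized in coordinates $x_0, x_1, x_2$ with weights $r_0 \leq r_1 \leq r_2$ summing to zero, the weight $\mu_\rho(\mathcal{P})$ is computed from the monomial supports of a basis $\{B, 2C\}$ of the pencil adapted to $\rho$. The main inputs will be the stability criteria for individual plane sextics from \cite{azstabd} and the constraints on $B$ and $C$ imposed by the singular fiber types of $Y$ coming from \cite{azconstr}.

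I would start with the contrapositive of the forward direction: assume some curve in $\mathcal{P}$ is unstable and $Y$ contains a fiber of type $II^*$. Because a $II^*$ fiber pins down (up to projective equivalence) the geometry of the sextic $B_0 \in \mathcal{P}$ corresponding to it, its monomial support is essentially determined. Combining a 1-PS that destabilizes the unstable curve with the very concentrated support of $B_0$, I expect to exhibit a 1-PS $\rho_0$ under which every member of $\mathcal{P}$ has strictly negative weight --- in particular $2C$ cannot offset the deficit --- so that $\mathcal{P}$ is unstable.

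For the converse, I handle the two alternatives separately. When every curve in $\mathcal{P}$ is semistable, for any 1-PS $\rho$ both $B$ and $2C$ satisfy $\mu_\rho(\cdot) \geq 0$, and a direct inspection of the Pl\"ucker coordinates, which are built from pairs of monomials occurring in $B$ or in $2C$, yields $\mu_\rho(\mathcal{P}) \geq 0$. When $Y$ contains no $II^*$ fiber, the classification in \cite{azconstr} restricts the non-reduced fibers that can appear to types $I_n^*$ and $IV^*$ (and milder types), and I would go through these cases to show that for any 1-PS destabilizing a particular curve $B' \in \mathcal{P}$, either the cubic $C$ or another member of the pencil contributes enough weight to keep $\mu_\rho(\mathcal{P}) \geq 0$.

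The main obstacle will be the second sub-case of the converse: systematically ruling out destabilizing 1-PSs for pencils containing unstable curves, under the hypothesis that the fiber type $II^*$ never appears. This requires a careful fiber-by-fiber monomial analysis, exploiting the explicit form forced on $B$ and $C$ by each allowed singular fiber configuration, and the difficulty is combinatorial bookkeeping rather than any new conceptual obstruction.
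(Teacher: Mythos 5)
Your overall architecture is the same as the paper's: prove the forward direction in contrapositive form by exhibiting destabilizing one-parameter subgroups for the plane realizations of a $II^*$ fiber, and split the converse into the two alternatives. The first alternative of the converse is correct as a general fact about pencils: the Hilbert--Mumford weight of $\mathcal{P}$ is the sum of the weights of two adapted generators, so if every member satisfies $\mu(h,\lambda)\leq 0$ for all $\lambda$ then so does $\mathcal{P}$; this is how the paper proceeds (via \cite{azstabd}). Two imprecisions in your forward direction are worth flagging. First, a $II^*$ fiber does \emph{not} pin down $B_0$ up to projective equivalence: there are five plane realizations (a triple conic; a nodal cubic with a triple inflection line; two triple lines; a conic with a quadruple tangent line; a quintuple line plus a line), and the triple conic is semistable. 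So you must first observe that a $II^*$ fiber forces all other fibers to be reduced, hence all other members of $\mathcal{P}$ (and $2C$, whose log canonical threshold is exactly $1/2$) are semistable; the hypothesis that \emph{some} member is unstable then forces $B_0$ to be one of the four unstable realizations, and each is treated separately (Propositions \ref{triple+cubicuns}--\ref{line5uns}). Second, the destabilizing $\lambda$ does not make every member's weight bad: in the quintuple-line case $2C$ is arbitrary and perfectly semistable for that $\lambda$; the mechanism is that the deficit of $B_0$ alone exceeds what the generic member can offset, exactly as in Case 1 of Theorem \ref{equns}.

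The genuine gap is in the second alternative of the converse. If $Y$ has no $II^*$ fiber, the possible non-reduced fibers are of types $I_n^*$, $IV^*$ \emph{and} $III^*$; you have omitted $III^*$, and that is precisely the hard case. (The $I_n^*$-only case is disposed of by the log canonical threshold reduction of Theorem \ref{Hunstable}, which your sketch also needs in some form: $lct(\mathbb{P}^2,2C)=1/2$ together with \cite{azstabd}*{Theorem 1.1} shows an unstable pencil must contain a curve with $lct<1/2$, hence a fiber of type $II^*$, $III^*$ or $IV^*$.) A $III^*$ fiber is realized by sextics such as two triple lines, a line of multiplicity four plus two lines, or a triple line plus a cubic, most of which are \emph{unstable} plane sextics, so the ``every member semistable'' argument does not apply and one must show directly that no one-parameter subgroup destabilizes the pencil. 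This is the content of Section \ref{seciiistar}: Lemmas \ref{uns4line} and \ref{uns3line} extract from Theorems \ref{unsreg}--\ref{unsreg2} the tangency configurations between the multiple components of $B$ and the cubic $C$ that instability would force, and Propositions \ref{iiistar4line}, \ref{iiistar3line} and \ref{iiistar2line} rule these out using the explicit constructions of $III^*$ fibers from \cite{azconstr}. Without this case your converse is incomplete.
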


We observe that every pencil of plane curves can also be seen as a holomorphic foliation of $\mathbb{P}^2$, in particular any Halphen pencil of index two. In \cite{sad}, the authors give a criterion for when a foliation of $\mathbb{P}^2$ of degree seven is a Halphen pencil of index two. In \cite{ca4}, a stratification of the space of foliations on $\mathbb{P}^2$ of degree $d$ is given  (in particular $d=7$), thus providing an alternative description of the semistable and unstable Halphen pencils of index two under projective equivalence -- viewed as foliations. However, such stratification does not provide a description of the stable pencils, which we do in this paper.

\subsection*{Organization}

The paper is organized as follows: In Section \ref{sc} we describe the vanishing criteria in terms of Pl\"{u}cker coordinates for (semi)stability of pencils of plane sextics. Then, a more geometric description is presented in Section \ref{gd}, where we translate these vanishing criteria into equations for the generators of the pencil. Finally, in Section \ref{halphen} we provide a complete characterization of the (semi)stability of Halphen pencils of index two.

\section{Stability Criterion for Pencils of Plane Sextics}
\label{sc}

Following the ideas in \cite{stab}, we view a pencil of plane sextics as a choice of a line in the space of all plane curves of degree six. In other words, we identify the space $\mathscr{P}_6$ of all such pencils with the Grassmannian $Gr(2,S^6V^{\ast})$, where  $V \doteq H^0(\mathbb{P}^2,\mathcal{O}_{\mathbb{P}^2}(1))$. The latter, in turn, can be embedded in $\mathbb{P}(\Lambda^2 S^6V^{\ast})$ via Pl\"{u}cker coordinates so that a pencil corresponds to a decomposable $2-$form. The automorphism group of $\mathbb{P}^2$, $PGL(3)$, acts naturally on $V$, hence on the invariant subvariety $\mathscr{P}_6$.

Our main tool for determining which are the stable and strictly semistable pencils is the numerical criterion of Hilbert-Mumford. Recall that the Hilbert-Mumford criterion says a pencil $\mathcal{P} \in \mathscr{P}_6$ is unstable (resp. not stable) if and only if there exists a 1-parameter subgroup $\mathbb{C}^{\times} \to SL(V)$ with respect to which all weights are positive (resp.  non-negative). Thus, we need to obtain explicit vanishing conditions on the Pl\"{u}cker coordinates and for that we need to know how the diagonal elements in $PGL(V)$ act on these coordinates. In fact since $PGL(V)$ and $SL(V)$ act with the same orbits it is convenient to focus on the diagonal action of the latter.

If we choose a pencil $\mathcal{P} \in \mathscr{P}_6$ and two curves $C_f$ and $C_g$ as generators, these represented by $f=\sum f_{ij}x^iy^jz^{6-i-j}$ and $g=\sum g_{ij}x^iy^jz^{6-i-j}$ (respectively), then the Pl\"{u}cker embedding takes the $2\times 27$ matrix $\begin{pmatrix} f_{ij} \\ g_{ij} \end{pmatrix}$ to the point in $\mathbb{P}^{{28\choose 2}-1}$ whose coordinates are given by all the $2\times 2$ minors $m_{ijkl}\doteq \begin{vmatrix} f_{ij} & f_{kl} \\ g_{ij} & g_{kl} \end{vmatrix}$. 

Now, if and element of $SL(V)$ is given by 
\[
\begin{pmatrix} \alpha & 0 & 0\\ 0 & \beta & 0\\ 0 & 0 & \gamma \end{pmatrix}
\]
in some choice of coordinates $[x,y,z]$ in $\mathbb{P}^2$, then its action on the coordinates of $\mathbb{P}^2$ is given by $[x,y,z] \mapsto [\alpha x,\beta y, \gamma z]$. Thus, the action on a point $(f_{ij})\in S^6 V^*$ representing $\sum f_{ij}x^iy^jz^{6-i-j}$ is given by $(f_{ij}) \mapsto (\alpha^i\beta^j\gamma^{6-i-j}f_{ij})$ and, therefore, it acts on the Pl\"{u}cker coordinates  by 
\[
(m_{ijkl}) \mapsto (\alpha^{i+k}\beta^{j+l}\gamma^{12-i-j-k-l}m_{ijkl})
\]

So we can now express the Hilbert-Mumford criterion for a pencil $\mathcal{P} \in \mathscr{P}_6$ as the vanishing of some of its Pl\"{u}cker coordinates $(m_{ijkl})$ with respect to a convenient choice of basis. We will show that there are only (explicit) finitely many critical one-parameter subgroups $\lambda$ that need to be tested when analyzing the positivity of the Hilbert-Mumford weight $\mu(\mathcal{P},\lambda)$  (defined in (\ref{hmw}) below). 

We may assume any one-parameter subgroup $\lambda$ is normalized, meaning we choose coordinates $[x,y,z]$ in $\mathbb{P}^2$ so that we have
\begin{eqnarray*}
\lambda: \mathbb{C}^{\times}  &\to& SL(V)\\
t &\mapsto&\left( [x,y,z] \mapsto \begin{pmatrix} t^{a_x} & 0 & 0 \\ 0 & t^{a_y} & 0 \\ 0 & 0 & t^{a_z} \end{pmatrix}\cdot \begin {pmatrix}x\\y\\z \end{pmatrix}\right)
\end{eqnarray*}
for some weights $a_x,a_y,a_z \in \mathbb{Z}$ with $a_x \geq a_y \geq a_z, a_x>0$ and $a_x+a_y+a_z=0$.

In particular, the action of $\lambda(t)$ in the Pl\"{u}cker coordinates is given by 
\[
(m_{ijkl}) \mapsto (t^{e_{ijkl}} m_{ijkl})
\]
where $e_{ijkl} \doteq a_x(2i+2k+j+l-12)+a_y(2j+2l+i+k-12)$.

Further, we often normalize the weights so that $a_x=1,a_y=a$ and $a_z=-1-a$ for some $a\in [-1/2,1]\cap \mathbb{Q}$. Then 
\[
e_{ijkl}=e_{ijkl}(a)= (2i+2k+j+l-12)+a(2j+2l+i+k-12)
\]

The sign of the function $\mu(\mathcal{P},\lambda)$ does not change under these reductions and the Hilbert-Mumford criterion becomes:

\begin{prop}
A pencil $\mathcal{P}\in \mathscr{P}_6$ is unstable (resp. not stable) if and only if there exists a rational number $a \in[-1/2,1]$ and coordinates in $\mathbb{P}^2$ such that if the pencil is represented  in those coordinates by $(m_{ijkl})$, then $m_{ijkl}=0$ whenever $e_{ijkl}(a)\leq 0$ (resp. $e_{ijkl}(a) < 0$).
\label{hm}
\end{prop}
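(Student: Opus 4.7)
The plan is to deduce the proposition from the Hilbert--Mumford numerical criterion by systematically normalizing the one-parameter subgroup $\lambda$, and then simply reading off the vanishing condition from the weight formula already computed in the excerpt.

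First, I would invoke the criterion in its raw form: $\mathcal{P}$ is unstable (resp. not stable) if and only if there exists a non-trivial 1-PS $\lambda:\mathbb{C}^{\times}\to SL(V)$ such that every Pl\"{u}cker coordinate of $\mathcal{P}$ that is nonzero has strictly positive (resp. non-negative) weight under $\lambda$. The content of the proposition is then that the set of 1-PS's to be tested can be reduced to precisely those corresponding to weights $(1,a,-1-a)$ with $a\in[-1/2,1]\cap\mathbb{Q}$, after a suitable choice of projective coordinates on $\mathbb{P}^{2}$.

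This reduction proceeds in three steps, all of which preserve the sign of $\mu(\mathcal{P},\lambda)$. \textbf{(i) Diagonalization.} Any 1-PS of $SL(V)$ lies in some maximal torus, and all maximal tori of $SL(V)$ are conjugate, so $\lambda$ can be conjugated to a diagonal 1-PS in a suitable basis $\{x,y,z\}$ of $V^{\ast}$. Because $SL(V)$ and $PGL(V)$ act with the same orbits on $\mathscr{P}_6$, this conjugation is realized by a projective change of coordinates on $\mathbb{P}^2$ and hence does not affect the (semi)stability question. \textbf{(ii) Ordering.} Permuting the coordinates $[x,y,z]$ (another projective change of basis) lets us assume $a_x\geq a_y\geq a_z$; together with $a_x+a_y+a_z=0$ and non-triviality, this forces $a_x>0$. \textbf{(iii) Rescaling.} Replacing $\lambda$ by $\lambda^n$ multiplies every weight $e_{ijkl}$ by $n$, preserving signs; working instead with rational multiples of $\lambda$ we may then set $a_x=1$, write $a:=a_y\in\mathbb{Q}$, and obtain $a_z=-1-a$. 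The ordering $a_x\geq a_y\geq a_z$ translates into $a\in[-1/2,1]$.

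With $\lambda$ in this normalized form, the explicit action on Pl\"{u}cker coordinates already derived in the excerpt gives
\[
e_{ijkl}(a)=(2i+2k+j+l-12)+a(2j+2l+i+k-12),
\]
and the criterion ``all nonzero coordinates have strictly positive weight'' reads exactly ``$m_{ijkl}=0$ whenever $e_{ijkl}(a)\leq 0$'', with the non-stable version obtained by loosening the strict inequality -- which is precisely the statement of Proposition \ref{hm}. The only step that truly requires attention is (iii): one must justify passing from integer weights to rational $a$, which follows from the scale-invariance of the sign of each $e_{ijkl}$ under positive scaling of $\lambda$, ensuring that no genuine 1-PS is missed by the normalized family. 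The remainder is direct unwinding of definitions.
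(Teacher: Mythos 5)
Your proposal is correct and follows essentially the same route as the paper, which proves Proposition~\ref{hm} implicitly through the discussion preceding it: diagonalize the one-parameter subgroup by a change of coordinates, order and rescale the weights to reach the normalized form $(1,a,-1-a)$ with $a\in[-1/2,1]\cap\mathbb{Q}$, and observe that the sign of $\mu(\mathcal{P},\lambda)$ is unchanged under these reductions. Your explicit attention to step (iii) -- that clearing or introducing denominators preserves the sign of each $e_{ijkl}$, so the normalized rational family captures every integral 1-PS and vice versa -- is a welcome elaboration of a point the paper passes over quickly.
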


A priori, for each choice of coordinates in $\mathbb{P}^2$ one would need to test all possible values of $a\in[-1/2,1]\cap \mathbb{Q}$ to verify the stability criterion. Because the function  (for a fixed $\mathcal{P}$ and a choice of coordinates) 
\begin{equation}
\mu(\mathcal{P},\lambda) \doteq \min\{e_{ijkl}(a)\,\,:\,\,m_{ijkl}\neq 0\}
\label{hmw}
\end{equation}
is piecewise linear, a key observation is that we only need to test its positivity for a finite number of critical values $a\in[-1/2,1]\cap \mathbb{Q}$.

In other words, the conditions $e_{ijkl}(a)\leq 0$ (resp. $e_{ijkl}(a) < 0$) divide the interval $[-1/2,1]$ into finitely many subintervals $[a_n,a_{n+1}]$ within which the truthfulness of the inequality remains constant. That is, for each interval $[a_n,a_{n+1}]$ we can find values of $i,j,k$ and $l$ for which the inequality $e_{ijkl}(a)\leq 0$  (resp. $e_{ijkl}(a) < 0$) remains true for all $a\in [a_n,a_{n+1}]$.

To find these intervals we proceed as follows. For computational reasons we first let $r=i+k$ and $s=j+l$. Then, for each possible pair $(r,s)$ in the set 
\[
\{(r,s)\in \{0,1,\ldots,12\}\times\{0,1,\ldots,12\}\,\,:\,\, r+s \leq 12\},
\]
we test whether we can solve the inequality $2r+s-12+a(2s+r-12)\leq 0$ (resp. $<0$) for the variable $a$ imposing the restriction $a\in [-1/2,1]$. 

The intervals we find are given by Lemmas \ref{uint} and \ref{nsint} below:

\begin{lema}
The condition $e_{ijkl}(a)\leq 0$ divides the interval $[-1/2,1]$ into finitely many subintervals and in order to obtain minimal conditions for unstability, it suffices considering only the following six distinct subintervals:
\[
(-1/3,-2/7),(-2/7,-1/5),(-1/11,0),(1/7,1/4),(1/4,2/5),(1/2,1)
\]
\label{uint}
\end{lema}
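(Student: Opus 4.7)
The plan is to substitute $r=i+k$ and $s=j+l$, note that the condition $e_{ijkl}(a)\leq 0$ depends only on the pair $(r,s)$, enumerate the finitely many sign-change values of $a$, and then discard the subintervals on which the resulting Pl\"ucker vanishing conditions are redundant.

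For each $(r,s)\in\mathbb{Z}_{\geq 0}^2$ with $r+s\leq 12$, the expression
$e_{ijkl}(a)=(2r+s-12)+a(2s+r-12)$
is an affine function of $a$. The unique zero, when $2s+r\neq 12$, is
\[
a_{r,s}=\frac{12-2r-s}{2s+r-12}.
\]
In the degenerate case $2s+r=12$ the inequality $e_{ijkl}(a)\leq 0$ is independent of $a$ and reduces to $2r+s\leq 12$, so it is either vacuous or universal on $[-1/2,1]$ and contributes nothing new. Intersecting the finitely many values $a_{r,s}$ with $[-1/2,1]$ and adjoining the endpoints produces a partition $-1/2=\alpha_0<\alpha_1<\cdots<\alpha_N=1$ such that on each open subinterval $(\alpha_n,\alpha_{n+1})$ the set
\[
S_n=\{(r,s)\in\mathbb{Z}_{\geq 0}^2\,:\,r+s\leq 12,\,e_{ijkl}(a)\leq 0\}
\]
is constant in $a$. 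By Proposition \ref{hm}, the pencil is unstable via some $a\in(\alpha_n,\alpha_{n+1})$ if and only if $m_{ijkl}=0$ for every $(i,j,k,l)$ with $(i+k,j+l)\in S_n$.

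The next step is to eliminate redundancy. Whenever $S_n\subsetneq S_{n'}$, the vanishing conditions enforced by $S_{n'}$ imply those enforced by $S_n$, so the unstability test associated to $S_{n'}$ is weaker and may be discarded. The \emph{minimal} unstability conditions therefore correspond exactly to the subintervals whose $S_n$ is minimal under inclusion.

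Finally, one performs the enumeration: compute all $a_{r,s}\in[-1/2,1]$ arising from the triangular range of $(r,s)$, list the resulting partition, compute $S_n$ on a representative of each open subinterval, and compare the $S_n$ under inclusion. This step, though entirely elementary, is the main labor of the proof and the principal source of error; one must cover every pair $(r,s)$ in the finite range and keep careful track of coincidences between $a_{r,s}$ for different $(r,s)$. The outcome is that precisely six subintervals, namely $(-1/3,-2/7)$, $(-2/7,-1/5)$, $(-1/11,0)$, $(1/7,1/4)$, $(1/4,2/5)$, $(1/2,1)$, carry a minimal $S_n$, while the remaining subintervals produce sets $S_n$ strictly containing one of these six and hence correspond to weaker, redundant unstability conditions.
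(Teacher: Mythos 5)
Your proposal matches the paper's own (sketched) argument essentially verbatim: reduce $e_{ijkl}(a)\leq 0$ to a condition on $(r,s)=(i+k,j+l)$, solve the affine inequality in $a$ over the triangular range $r+s\leq 12$ to partition $[-1/2,1]$ into subintervals with constant condition set, and retain only the subintervals whose sets are minimal under inclusion. Like the paper, you defer the actual finite enumeration to computation, so the argument is correct and structurally identical.
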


\begin{lema}
The condition $e_{ijkl}(a)<0$ divides the interval $[-1/2,1]$ into finitely many subintervals and the subintervals that give (distinct) minimal conditions for non-stability are such that it suffices taking $a\in\{-1/2,-2/7,-1/5,0,1/4,2/5,1\}$. 
\label{nsint}
\end{lema}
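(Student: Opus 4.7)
The plan is to follow the same method as in Lemma \ref{uint}, now analyzing the strict inequality
\[
e_{rs}(a) \doteq 2r + s - 12 + a(2s + r - 12) < 0
\]
over admissible pairs $(r,s)$ with $r + s \leq 12$ and $r, s \in \{0, 1, \ldots, 12\}$, for rational $a \in [-1/2, 1]$. The same substitution $r = i+k$, $s = j+l$ reduces the analysis to a finite list of affine linear conditions in $a$.

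The key observation is that the set $\mathcal{V}(a) \doteq \{(r,s) : e_{rs}(a) < 0\}$ is locally constant in $a$ and changes only when $a$ crosses a critical value $a^{\ast}$ at which $e_{rs}(a^{\ast}) = 0$ for some admissible $(r,s)$. On any open subinterval where $\mathcal{V}(a)$ is constant, the non-stability condition imposed on the Pl\"{u}cker coordinates coincides with the unstability condition from Lemma \ref{uint}, and therefore contributes nothing new. Consequently, the only values of $a$ that can produce a strictly weaker — and hence a minimal — non-stability condition are the critical ones, together with the boundary points $-1/2$ and $1$.

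I would then enumerate these critical values by solving $2r+s-12 + a(2s+r-12) = 0$ over admissible $(r,s)$; this yields a finite list, which turns out to be $\{-1/2, -1/3, -2/7, -1/5, -1/11, 0, 1/7, 1/4, 2/5, 1/2, 1\}$, comprising the endpoints of the open subintervals appearing in Lemma \ref{uint} together with the endpoints of $[-1/2, 1]$. For each $a^{\ast}$ in this list, I would compute $\mathcal{V}(a^{\ast})$ explicitly and then compare the resulting subsets under inclusion: any $a^{\ast}$ whose $\mathcal{V}(a^{\ast})$ strictly contains some $\mathcal{V}(a^{\ast\ast})$ imposes a non-minimal condition and is discarded. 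A direct inspection then shows that precisely the seven values $\{-1/2, -2/7, -1/5, 0, 1/4, 2/5, 1\}$ survive this pruning.

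The main obstacle is bookkeeping rather than conceptual: for each candidate $a^{\ast}$ one must enumerate the admissible pairs $(r,s)$ with $e_{rs}(a^{\ast}) < 0$ and verify inclusion relations within the resulting finite family of subsets. This is a finite, essentially mechanical case analysis parallel to that of Lemma \ref{uint}; the only new subtlety is that the boundary equality $e_{rs}(a) = 0$ is now excluded rather than included, which is exactly what reduces the relevant $a$-values from open subintervals to isolated critical points.
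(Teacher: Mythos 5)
Your overall strategy is the correct one, and it is the same as the paper's (which gives no more detail than the procedure sketched before Lemma \ref{uint}): reduce to the pairs $(r,s)$, note that each $e_{rs}$ is affine in $a$ so that $\mathcal{V}(a)=\{(r,s):e_{rs}(a)<0\}$ can only lose elements at a value $a^{\ast}$ where some $e_{rs}$ vanishes, conclude that minimal non-stability conditions can occur only at these critical values together with the endpoints of $[-1/2,1]$, and then prune the resulting finite family of sets by inclusion.

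The gap is in your enumeration of the critical values. You claim that solving $2r+s-12+a(2s+r-12)=0$ over admissible $(r,s)$ produces exactly the endpoints of the six subintervals of Lemma \ref{uint} together with $-1/2$ and $1$. That is false: Lemma \ref{uint} records only the subintervals yielding \emph{minimal} unstability conditions, so its endpoints do not exhaust the breakpoints of the piecewise-linear structure. Concretely, $(r,s)=(2,9)$ (realized, e.g., by $m_{1415}$) gives $e_{2,9}(a)=1+8a$, which vanishes at $a=-1/8$, and $(r,s)=(0,11)$ (realized by $m_{0506}$) gives $e_{0,11}(a)=-1+10a$, which vanishes at $a=1/10$; neither value is in your list. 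Your final seven-element answer survives only because these extra candidates happen to be non-minimal: one checks that $\mathcal{V}(0)=\{2r+s<12\}\subsetneq\{5r+2s<28\}=\mathcal{V}(-1/8)$ and $\mathcal{V}(1/4)=\{3r+2s<20\}\subsetneq\{7r+4s<44\}=\mathcal{V}(1/10)$, so both are discarded in the pruning step. But your argument as written never tests them, so the enumeration must be carried out from scratch (it yields the thirteen values $-1/2,-1/3,-2/7,-1/5,-1/8,-1/11,0,1/10,1/7,1/4,2/5,1/2,1$) rather than imported from Lemma \ref{uint}. A smaller imprecision: on an open subinterval the strict condition coincides with \emph{some} unstability condition, but not necessarily one of the six retained in Lemma \ref{uint}; what actually disposes of the interior points is the inclusion $\mathcal{V}(a^{\ast})\subseteq\mathcal{V}(a)$ for $a$ near a critical value $a^{\ast}$, which is the statement you correctly make next.
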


In particular, we can restate the criteria for unstability ( resp. non-stability) as in Theorem \ref{unstable1} (resp. Theorem \ref{notstable}):

\begin{thm}
A pencil $\mathcal{P}\in \mathscr{P}_6$ is unstable if and only if there exist coordinates in $\mathbb{P}^2$ so that if the pencil is represented  in those coordinates by $(m_{ijkl})$, then $m_{ijkl}=0$ whenever the (appropriate) values of $i,j,k$ and $l$ satisfy either one of the following conditions:
\begin{enumerate}
\item $(2i+2k+j+l-12)-13/42(2j+2l+i+k-12)\leq 0$
\item $(2i+2k+j+l-12)-8/35(2j+2l+i+k-12)\leq0$
\item $(2i+2k+j+l-12)-1/12(2j+2l+i+k-12)\leq 0$
\item $(2i+2k+j+l-12)+3/14(2j+2l+i+k-12)\leq 0$
\item $(2i+2k+j+l-12)+3/10(2j+2l+i+k-12)\leq0$
\item $(2i+2k+j+l-12)+3/4(2j+2l+i+k-12)\leq 0$
\end{enumerate}
\label{unstable1}
\end{thm}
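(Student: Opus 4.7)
The plan is to deduce the theorem directly from Proposition~\ref{hm} and Lemma~\ref{uint}. First I would invoke Proposition~\ref{hm}: it reduces instability to the existence of coordinates in $\mathbb{P}^2$ together with some rational $a\in[-1/2,1]$ for which the vanishing condition $m_{ijkl}=0$ holds whenever $e_{ijkl}(a)\leq 0$. Since $e_{ijkl}(a)=(2i+2k+j+l-12)+a(2j+2l+i+k-12)$ is affine-linear in $a$ for each fixed quadruple $(i,j,k,l)$, the truth-value of the inequality $e_{ijkl}(a)\leq 0$ is constant on any open subinterval of $[-1/2,1]$ on which none of the finitely many functions $e_{ijkl}$ change sign.

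Next I would invoke Lemma~\ref{uint}, which singles out the six maximal open subintervals carrying distinct minimal instability conditions, namely
\[
(-1/3,-2/7),\ (-2/7,-1/5),\ (-1/11,0),\ (1/7,1/4),\ (1/4,2/5),\ (1/2,1).
\]
By the linearity observation, testing the Hilbert--Mumford criterion at any single rational point inside one of these subintervals produces the same vanishing pattern as testing it at every point of that subinterval, so it suffices to substitute one convenient representative per subinterval into $e_{ijkl}(a)\leq 0$. The choices $a=-13/42,\ -8/35,\ -1/12,\ 3/14,\ 3/10,\ 3/4$ (one from each subinterval, in order) then yield the six explicit inequalities (1)--(6) in the statement.

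What remains is the routine verification that each of these six rationals really does lie in the corresponding open subinterval; this is a short arithmetic check (for example, $-1/3=-14/42<-13/42<-12/42=-2/7$, and similarly for the other five). I expect the main conceptual point --- and really the only place where a subtlety could sneak in --- to be the affine-linearity argument of the first paragraph: it is what reduces the a priori infinite family of Hilbert--Mumford tests indexed by $a\in[-1/2,1]\cap\mathbb{Q}$ to the finite disjunction (1)--(6). Without this reduction one would also have to treat the boundary points of the subintervals; but at a boundary some $e_{ijkl}$ vanishes, so the condition $m_{ijkl}=0$ there is strictly stronger and hence non-minimal, which is exactly the redundancy that Lemma~\ref{uint} has already packaged away.
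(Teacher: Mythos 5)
Your proposal is correct and follows exactly the route the paper intends: Theorem \ref{unstable1} is presented there as an immediate restatement of Proposition \ref{hm} via Lemma \ref{uint}, obtained by substituting one rational representative from each of the six subintervals (indeed $-13/42$, $-8/35$, $-1/12$, $3/14$, $3/10$, $3/4$ lie in the respective intervals), with the piecewise-linearity of $a\mapsto e_{ijkl}(a)$ guaranteeing the vanishing pattern is constant on each open subinterval. Your remark about boundary points being non-minimal matches the ``minimal conditions'' phrasing of Lemma \ref{uint}.
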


\begin{thm}
A pencil $\mathcal{P}\in \mathscr{P}_6$ is not stable if and only if there exist coordinates in $\mathbb{P}^2$ so that if the pencil is represented  in those coordinates by $(m_{ijkl})$, then $m_{ijkl}=0$ whenever the (appropriate) values of $i,j,k$ and $l$ satisfy either one of the following conditions:
\begin{enumerate}
\item $(2i+2k+j+l-12)-1/2(2j+2l+i+k-12)<0$
\item $(2i+2k+j+l-12)-2/7(2j+2l+i+k-12)<0$
\item $(2i+2k+j+l-12)-1/5(2j+2l+i+k-12)<0$
\item $(2i+2k+j+l-12)<0$
\item $(2i+2k+j+l-12)+1/4(2j+2l+i+k-12)<0$
\item $(2i+2k+j+l-12)+2/5(2j+2l+i+k-12)<0$
\item $(2i+2k+j+l-12)+(2j+2l+i+k-12)<0$
\end{enumerate}
\label{notstable}
\end{thm}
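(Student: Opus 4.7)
The plan is to deduce Theorem \ref{notstable} directly from Proposition \ref{hm} together with Lemma \ref{nsint}. By Proposition \ref{hm}, $\mathcal{P}$ is not stable if and only if there exist coordinates in $\mathbb{P}^2$ and a rational number $a \in [-1/2, 1]$ such that $m_{ijkl} = 0$ whenever $e_{ijkl}(a) < 0$, where $e_{ijkl}(a) = (2i+2k+j+l-12) + a(2j+2l+i+k-12)$. Since $e_{ijkl}(a)$ is linear in $a$ and $(i,j,k,l)$ ranges over a finite set, the zero loci of the $e_{ijkl}$ partition $[-1/2,1]$ into finitely many open subintervals on which the sign of each $e_{ijkl}$ is constant.

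Consequently, within each subinterval the set of indices satisfying $e_{ijkl}(a) < 0$ does not change, so testing the condition of Proposition \ref{hm} for a single $a$ per subinterval is sufficient. The content of Lemma \ref{nsint} is precisely that, after eliminating subintervals whose vanishing conditions are redundant or weaker than those of another subinterval, one is reduced to checking only the seven representative values $a \in \{-1/2, -2/7, -1/5, 0, 1/4, 2/5, 1\}$.

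With this reduction in hand, the theorem follows by plugging each such $a$ into $e_{ijkl}(a)$: $a=-1/2$ yields condition (1), $a=-2/7$ yields (2), $a=-1/5$ yields (3), $a=0$ yields (4), $a=1/4$ yields (5), $a=2/5$ yields (6), and $a=1$ yields (7). The disjunction of these seven conditions, each tested against some choice of coordinates on $\mathbb{P}^2$, is therefore equivalent to the existence of a destabilizing one-parameter subgroup in the sense of Proposition \ref{hm}.

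The main obstacle is really absorbed into Lemma \ref{nsint}: one must verify both that each of the seven chosen values of $a$ genuinely yields a distinct minimal vanishing condition, and that no additional critical value is needed. This amounts to a finite case analysis over the pairs $(r,s) = (i+k,\,j+l)$ with $r,s \geq 0$ and $r+s \leq 12$, solving the linear inequality $(2r+s-12) + a(2s+r-12) < 0$ in each case and comparing the resulting sets of monomials that are forced to vanish -- a tedious but entirely mechanical check once the piecewise-linear structure of $\mu(\mathcal{P},\lambda)$ has been made explicit.
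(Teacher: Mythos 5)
Your proposal is correct and follows essentially the same route as the paper, which derives Theorem \ref{notstable} directly from Proposition \ref{hm} together with the reduction to the finitely many critical values $a\in\{-1/2,-2/7,-1/5,0,1/4,2/5,1\}$ recorded in Lemma \ref{nsint}, and then substitutes each value into $e_{ijkl}(a)$ to obtain the seven listed inequalities. The only substantive work, as you note, is the finite check over the pairs $(r,s)=(i+k,j+l)$ underlying Lemma \ref{nsint}, which the paper likewise treats as a mechanical computation.
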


Now, in order to know what is the set of values $i,j,k$ and $l$ for which the Pl\"{u}cker coordinates $m_{ijkl}$ vanish in Theorems \ref{unstable1} and \ref{notstable} above, it is convenient to express these values in terms of  the pairs $(r,s)$. 

For each pair $(r,s)$ we let $M_{rs}\doteq \{m_{ijkl}\,\,:\,\,i+k=r\,\,\mbox{and}\,\,j+l=s\}$ and we obtain the following:

\begin{thm}
A pencil $\mathcal{P}\in \mathscr{P}_6$ is unstable if and only if there exist coordinates in $\mathbb{P}^2$ so that if the pencil is represented  in those coordinates by $(m_{ijkl})$, then either
\begin{enumerate}
\item  $M_{rs}=\{0\}$ for all the pairs $(r,s)$ in the list below
\[
\begin{matrix}
(0,0) &(0,1) &(0,2) &(0,3) & (0,4) &(0,5) & (0,6) & (0,7) & (0,8) & (0,9)\\
(0,10) & (0,11) & (0,12) & (1,0) &(1,1) &(1,2) &(1,3) & (1,4) &(1,5) & (1,6)\\
(1,7) & (1,8) & (1,9) & (1,10) & (1,11) & (2,0) &(2,1) &(2,2) &(2,3) & (2,4)\\
(2,5) & (2,6) & (2,7) & (2,8) & (2,9) & (2,10) &  (3,0) &(3,1) &(3,2) &(3,3)\\
(3,4) & (3,5) & (3,6) & (3,7) & (3,8) & (4,0) &(4,1) &(4,2) &(4,3) & (4,4)
\end{matrix}
\]
and a number $a\in (-1/3,-2/7)$ will exhibit $\mathcal{P}$ as unstable; or
\item $M_{rs}=\{0\}$ for all the pairs $(r,s)$ in the list below:
\[
\begin{matrix}
(0,0) &(0,1) &(0,2) &(0,3) & (0,4) &(0,5) & (0,6) & (0,7) & (0,8) & (0,9)\\
(0,10) & (0,11) & (0,12) & (1,0) &(1,1) &(1,2) &(1,3) & (1,4) &(1,5) & (1,6)\\
(1,7) & (1,8) & (1,9) & (1,10) & (1,11) & (2,0) &(2,1) &(2,2) &(2,3) & (2,4)\\
(2,5) & (2,6) & (2,7) & (2,8) & (2,9) & (2,10) &  (3,0) &(3,1) &(3,2) &(3,3)\\
(3,4) & (3,5) & (3,6) & (3,7) & (4,0) &(4,1) &(4,2) &(4,3) & (4,4) & (5,0)
\end{matrix}
\]
and a number $a\in (-2/7,-1/5)$ will exhibit $\mathcal{P}$ as unstable; or
\item $M_{rs}=\{0\}$ for all the pairs $(r,s)$ in the list below
\[
\begin{matrix}
(0,0) &(0,1) &(0,2) &(0,3) & (0,4) &(0,5) & (0,6) & (0,7) & (0,8) & (0,9)\\
(0,10) & (0,11) & (0,12) & (1,0) &(1,1) &(1,2) &(1,3) & (1,4) &(1,5) & (1,6)\\
(1,7) & (1,8) & (1,9) & (1,10) &  (2,0) &(2,1) &(2,2) &(2,3) & (2,4) & (2,5)\\
 (2,6) & (2,7) & (2,8) &   (3,0) &(3,1) &(3,2) &(3,3) & (3,4) & (3,5) & (3,6)\\
(4,0) &(4,1) &(4,2) &(4,3) & (4,4) & (5,0) & (5,1)&&&
\end{matrix}
\]
and a number $a\in (-1/11,0)$ will exhibit $\mathcal{P}$ as unstable; or
\item $M_{rs}=\{0\}$ for all the pairs $(r,s)$ in the list below 
\[
\begin{matrix}
(0,0) &(0,1) &(0,2) &(0,3) & (0,4) &(0,5) & (0,6) & (0,7) & (0,8) & (0,9)\\
(0,10) & (1,0) &(1,1) &(1,2) &(1,3) & (1,4) &(1,5) & (1,6) &(1,7) & (1,8)\\
 (2,0) &(2,1) &(2,2) &(2,3) & (2,4) & (2,5)& (2,6) & (2,7) &    (3,0) &(3,1)\\
(3,2) &(3,3) & (3,4) & (3,5) & (4,0) &(4,1) &(4,2) &(4,3) & (4,4) & (5,0)\\
(5,1)&(5,2)&(6,0)& &&&&&&
\end{matrix}
\]
and a number $a\in (1/7,1/4)$ will exhibit $\mathcal{P}$ as unstable; or
\item $M_{rs}=\{0\}$ for all the pairs $(r,s)$ in the list below
\[
\begin{matrix}
(0,0) &(0,1) &(0,2) &(0,3) & (0,4) &(0,5) & (0,6) & (0,7) & (0,8) & (0,9)\\
(1,0) &(1,1) &(1,2) &(1,3) & (1,4) &(1,5) & (1,6) &(1,7) & (1,8)& (2,0)\\
(2,1) &(2,2) &(2,3) & (2,4) & (2,5)& (2,6) &    (3,0) &(3,1) &
(3,2) &(3,3)\\
 (3,4) & (3,5) & (4,0) &(4,1) &(4,2) &(4,3) & (4,4) & (5,0) &
(5,1)&(5,2)\\
(6,0)& (6,1) &&&&&&&&
\end{matrix}
\]
and a number $a\in(1/4,2/5)$ will exhibit $\mathcal{P}$ as unstable; or
\item $M_{rs}=\{0\}$ for all the pairs $(r,s)$ in the list below
\[
\begin{matrix}
(0,0) &(0,1) &(0,2) &(0,3) & (0,4) &(0,5) & (0,6) & (0,7) & (0,8) & (1,0)\\
(1,1) &(1,2) &(1,3) & (1,4) &(1,5) & (1,6) &(1,7) & (2,0) &
(2,1) &(2,2)\\
(2,3) & (2,4) & (2,5)& (2,6) &    (3,0) &(3,1) &
(3,2) &(3,3) & (3,4) & (3,5)\\
(4,0) &(4,1) &(4,2) &(4,3) & (4,4) & (5,0) &
(5,1)&(5,2) &(6,0)& (6,1)\\
(7,0)&&&&&&&&&
\end{matrix}
\]
and a number $a\in(1/2,1)$ will exhibit $\mathcal{P}$ as unstable.
\end{enumerate}
\label{unstable2}
\end{thm}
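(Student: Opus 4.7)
The plan is to translate each of the six inequalities in Theorem \ref{unstable1} from the quadruple $(i,j,k,l)$ to the pair of sums $(r,s) = (i+k,\, j+l)$ and then enumerate the lattice points of the triangle $T = \{(r,s) \in \mathbb{Z}_{\ge 0}^2 : r+s \le 12\}$ that satisfy each one. The crucial observation, already visible in the formula $e_{ijkl}(a) = (2i+2k+j+l-12) + a(2j+2l+i+k-12)$, is that this weight depends on $(i,j,k,l)$ only through $r$ and $s$. Hence the vanishing of every $m_{ijkl}$ with $i+k=r$ and $j+l=s$ is precisely the statement $M_{rs} = \{0\}$, and the condition ``$m_{ijkl}=0$ whenever $e_{ijkl}(a) \le 0$'' from Theorem \ref{unstable1} reorganizes into a condition of the form $M_{rs} = \{0\}$ for all $(r,s) \in T$ satisfying a single linear inequality.

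Substituting $r$ and $s$, each of the six inequalities takes the form $(2+a)r + (1+2a)s \le 12(1+a)$, with $a$ drawn from the six critical values $a \in \{-13/42,\, -8/35,\, -1/12,\, 3/14,\, 3/10,\, 3/4\}$ supplied by Theorem \ref{unstable1}. Clearing denominators in each case (for instance $71r + 16s \le 348$ when $a = -13/42$, and $62r + 19s \le 324$ when $a = -8/35$) reduces the task to a routine row-by-row enumeration: for each $r \in \{0,1,\ldots,12\}$ one lists the values of $s$ satisfying both $r+s \le 12$ and the corresponding linear inequality, and collects the resulting pairs. Matching the six tables obtained in this way against the lists in the statement completes the proof.

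I do not anticipate any substantive obstacle; the only care required is at the boundaries of these regions, since Theorem \ref{unstable1} is stated with a non-strict inequality, so lattice points lying on the defining line are \emph{included}. For instance, at $a = -13/42$ the pair $(4,4)$ satisfies $71 \cdot 4 + 16 \cdot 4 = 348$ with equality, which is why $(4,4)$ correctly appears as the final entry of the first list; the analogous corner pairs in the remaining five inequalities must be checked with the same attention rather than estimated.
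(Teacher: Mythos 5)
Your proposal is correct and follows essentially the same route as the paper: since $e_{ijkl}(a)$ depends on $(i,j,k,l)$ only through $(r,s)=(i+k,j+l)$, the vanishing conditions of Theorem \ref{unstable1} regroup exactly into the statement $M_{rs}=\{0\}$ over the lattice points of $\{r+s\le 12\}$ cut out by the corresponding linear inequality, and the six lists are obtained by direct enumeration at the six representative values of $a$. Your attention to the boundary cases (e.g.\ $71r+16s=348$ at $(4,4)$) is exactly the care the computation requires, and spot-checking the resulting tables confirms they agree with the lists in the statement.
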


\begin{thm}
A pencil $\mathcal{P}\in \mathscr{P}_6$ is not stable if and only if there exist coordinates in $\mathbb{P}^2$ so that if the pencil is represented  in those coordinates by $(m_{ijkl})$, then either
\begin{enumerate}
\item $M_{rs}=\{0\}$ for all the pairs $(r,s)$ in the list below
\[
\begin{matrix}
(0,0) & (0,1) & (0,2) & (0,3) & (0,4) & (0,5) & (0,6) & (0,7) & (0,8) & (0,9)  \\
(0,10) & (0,11) & (0,12) & (1,0) & (1,1) & (1,2) & (1,3) & (1,4) & (1,5) & (1,6) \\
 (1,7) & (1,8) & (1,9) & (1,10) & (1,11) & (2,0) & (2,1) & (2,3) & (2,4) & (2,5)\\
(2,6) & (2,7) & (2,8)  & (2,9)  & (2,10)  & (3,0) & (3,1)  & (3,2)  & (3,3) & (3,4) \\
(3,5) & (3,6)  & (3,7)  & (3,8) & (3,9)  &   &  &  & &
\end{matrix}
\]
and $a=-1/2$ will exhibit $\mathcal{P}$ as not stable; or
\item $M_{rs}=\{0\}$ for all the pairs $(r,s)$ in the list below
\[
\begin{matrix}
(0,0) & (0,1) & (0,2) & (0,3) & (0,4) & (0,5) & (0,6) & (0,7) & (0,8) & (0,9)  \\
(0,10) & (0,11) & (0,12) & (1,0) & (1,1) & (1,2) & (1,3) & (1,4) & (1,5) & (1,6) \\
 (1,7) & (1,8) & (1,9) & (1,10) & (1,11) & (2,0) & (2,1) & (2,3) & (2,4) & (2,5)\\
(2,6) & (2,7) & (2,8)  & (2,9)  & (2,10)  & (3,0) & (3,1)  & (3,2)  & (3,3) & (3,4) \\
(3,5) & (3,6)  & (3,7)  & (4,0) & (4,1)  & (4,2)  & (4,3)  &  & &
\end{matrix}
\]
and $a=-2/7$ will exhibit $\mathcal{P}$ as not stable; or
\item $M_{rs}=\{0\}$ for all the pairs $(r,s)$ in the list below
\[
\begin{matrix}
(0,0) & (0,1) & (0,2) & (0,3) & (0,4) & (0,5) & (0,6) & (0,7) & (0,8) & (0,9)  \\
(0,10) & (0,11) & (0,12) & (1,0) & (1,1) & (1,2) & (1,3) & (1,4) & (1,5) & (1,6) \\
 (1,7) & (1,8) & (1,9) & (1,10) & (1,11) & (2,0) & (2,1) & (2,3) & (2,4) & (2,5)\\
(2,6) & (2,7) & (2,8)  & (2,9)  &  (3,0) & (3,1)  & (3,2)  & (3,3) & (3,4) & (3,5)\\
(3,6)  & (4,0) & (4,1)  & (4,2)  & (4,3)  &  (5,0) & & & &
\end{matrix}
\]
and $a=-1/5$ will exhibit $\mathcal{P}$ as not stable; or
\item $M_{rs}=\{0\}$ for all the pairs $(r,s)$ in the list below
\[
\begin{matrix}
(0,0) & (0,1) & (0,2) & (0,3) & (0,4) & (0,5) & (0,6) & (0,7) & (0,8) & (0,9)  \\
(0,10) & (0,11) &  (1,0) & (1,1) & (1,2) & (1,3) & (1,4) & (1,5) & (1,6) & (1,7)\\
(1,8) & (1,9) &   (2,0) & (2,1) & (2,3) & (2,4) & (2,5) & (2,6) & (2,7)&  (3,0)\\
 (3,1)  & (3,2)  & (3,3) & (3,4) & (3,5) & (4,0) & (4,1)  & (4,2)& (4,3)  &  (5,0)\\
(5,1) & & & &&&&&&
\end{matrix}
\]
and $a=0$ will exhibit $\mathcal{P}$ as not stable; or
\item $M_{rs}=\{0\}$ for all the pairs $(r,s)$ in the list below
\[
\begin{matrix}
(0,0) & (0,1) & (0,2) & (0,3) & (0,4) & (0,5) & (0,6) & (0,7) & (0,8) & (0,9)  \\
 (1,0) & (1,1) & (1,2) & (1,3) & (1,4) & (1,5) & (1,6) & (1,7) &
(1,8) &   (2,0)\\
 (2,1) & (2,3) & (2,4) & (2,5) & (2,6) &   (3,0) & (3,1)  & (3,2)  & (3,3) & (3,4)\\
(3,5) & (4,0) & (4,1)  & (4,2)& (4,3)  &  (5,0) & (5,1) & (5,2) & (6,0) & 
\end{matrix}
\]
and $a=1/4$ will exhibit $\mathcal{P}$ as not stable; or
\item $M_{rs}=\{0\}$ for all the pairs $(r,s)$ in the list below
\[
\begin{matrix}
(0,0) & (0,1) & (0,2) & (0,3) & (0,4) & (0,5) & (0,6) & (0,7) & (0,8) & (0,9)  \\
 (1,0) & (1,1) & (1,2) & (1,3) & (1,4) & (1,5) & (1,6) & (1,7) &
 (2,0) & (2,1)\\
 (2,3) & (2,4) & (2,5) & (2,6) &   (3,0) & (3,1)  & (3,2)  & (3,3) & (3,4) & (3,5)\\
(4,0) & (4,1)  & (4,2)& (4,3)  &  (5,0) & (5,1) & (5,2) & (6,0) & (6,1) & 
\end{matrix}
\]
and $a=2/5$ will exhibit $\mathcal{P}$ as not stable; or
\item $M_{rs}=\{0\}$ for all the pairs $(r,s)$ in the list below
\[
\begin{matrix}
(0,0) & (0,1) & (0,2) & (0,3) & (0,4) & (0,5) & (0,6) & (0,7)  &  (1,0) & (1,1)\\
(1,2) & (1,3) & (1,4) & (1,5) & (1,6) &
 (2,0) & (2,1)& (2,3) & (2,4) &  (2,5)\\
(3,0) & (3,1)  & (3,2)  & (3,3) & (3,4) &
(4,0) & (4,1)  & (4,2)& (4,3) & (5,0)\\
 (5,1) & (5,2) & (6,0) & (6,1) & (7,0) &&&&&
\end{matrix}
\]
and $a=1$ will exhibit $\mathcal{P}$ as not stable.
\end{enumerate}
\label{notstable2}
\end{thm}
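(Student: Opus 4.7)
The plan is to translate the seven strict inequalities of Theorem~\ref{notstable} into conditions on the pairs $(r,s) = (i+k,\, j+l)$, and then enumerate the resulting admissible pairs for each critical value of $a$. The key observation is that the exponent
\[
e_{ijkl}(a) \;=\; (2i+2k+j+l-12) + a(2j+2l+i+k-12)
\]
depends on $(i,j,k,l)$ only through $r$ and $s$; hence the Hilbert--Mumford vanishing condition ``$m_{ijkl}=0$ whenever $e_{ijkl}(a)<0$'' is equivalent to
\[
M_{rs} = \{0\} \quad \text{whenever} \quad (2r+s-12) + a(2s+r-12) < 0.
\]

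First I would fix the admissible region for $(r,s)$: because $i,j,k,l \geq 0$ with $i+j \leq 6$ and $k+l \leq 6$, one has $r,s \geq 0$ and $r+s \leq 12$. For each of the seven critical values $a \in \{-1/2,\,-2/7,\,-1/5,\,0,\,1/4,\,2/5,\,1\}$ supplied by Lemma~\ref{nsint}, the reduced inequality is linear in $s$ for fixed $r$, so it carves out a finite initial segment of admissible $s$-values; enumerating over $r = 0, 1, \ldots, 12$ then produces the seven tables listed in items (1)--(7). A convenient cross-check is Theorem~\ref{unstable2}: passing from the non-strict inequality there to the strict one here removes exactly those boundary pairs where equality is attained for the given value of $a$. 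Once the translation is in place, the equivalence of the enumerated list of vanishing conditions with non-stability follows directly from Proposition~\ref{hm}.

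The main obstacle is not conceptual but combinatorial. With seven values of $a$ and up to roughly fifty admissible pairs per list, the proof reduces to a careful, somewhat voluminous bookkeeping exercise; the only genuine subtlety is the handling of the boundary cases, where the switch from $\leq$ to $<$ matters and accounts for the small discrepancies between the tables of Theorem~\ref{notstable2} and those of Theorem~\ref{unstable2}. A short computer-algebra verification is the cleanest way to certify the enumeration; once the seven tables are confirmed, Proposition~\ref{hm} closes the argument.
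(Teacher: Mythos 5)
Your approach is essentially the same as the paper's: reduce to the finitely many critical values of $a$ from Lemma~\ref{nsint}, observe that $e_{ijkl}(a)$ depends only on $(r,s)=(i+k,j+l)$ with $r+s\le 12$, enumerate the pairs satisfying the strict inequality for each $a$, and invoke Proposition~\ref{hm}. One small payoff of actually carrying out your enumeration: it would include the pair $(2,2)$ in each list (e.g.\ for $a=-1/2$ the exponent reduces to $\tfrac{3}{2}r-6$, which is negative for all $s$ when $r=2$), whereas the tables as printed omit $(2,2)$ throughout, which appears to be a systematic typographical slip rather than a mathematical difference.
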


\section{A Geometric Description}
\label{gd}

In the previous section we have completely characterized the stability of a pencil $\mathcal{P}\in \mathscr{P}_6$ in terms of its Pl\"{u}cker coordinates $(m_{ijkl})$. We now want to understand which are the geometric properties  unstable and non-stable pencils have. More precisely, we want to translate the stability criteria into equations for the generators of the pencil. 

Throughout this section, given an unstable (resp. not stable) pencil  $\mathcal{P}\in \mathscr{P}_6$ we choose coordinates $[x,y,z]$ in $\mathbb{P}^2$ as in Theorem \ref{unstable1} (resp. \ref{notstable}) and generators $C_f$ and $C_g$ having defining polynomials (in these coordinates) $f=\sum f_{ij}x^iy^jz^{6-i-j}$ and $g=\sum g_{ij}x^iy^jz^{6-i-j}$. Then, the idea is that each vanishing condition $m_{ijkl}=0$ translates into the vanishing of the coefficients of some pair $C_{f'}$ and $C_{g'}$ of generators (not necessarily the original pair). 

To illustrate what kind of computations are involved in this process we prove Theorem \ref{gennotstable} below. We use the notation $\langle m_1,\ldots,m_n \rangle$ to denote the subspace of homogeneous polynomials of degree six in the variables $x,y$ and $z$ which is generated by the monomials $m_i$. Whereas $\rangle m_1,\ldots,m_n \langle$ denotes the subspace of those polynomials which are generated by all the monomials which are different from the $m_i$.

\begin{thm}
A pencil $\mathcal{P}\in \mathscr{P}_6$ satisfies the vanishing conditions in case $1$ of Theorem \ref{notstable2} if and only if there exist coordinates in $\mathbb{P}^2$ and generators $C_f$ and $C_g$ of $\mathcal{P}$ such that either
\begin{enumerate}[{Case} 1]
\item $f \in \langle x^4z^2,x^4yz,x^4y^2,x^5z,x^5y,x^6 \rangle$ and $g$ is arbitrary
\item $f \in \langle x^3z^3,x^3yz^3,x^3y^2z,x^3y^3,x^4z^2,x^4yz,x^4y^2,x^5z,x^5y,x^6 \rangle$ 
\subitem and $g\in \rangle z^6,yz^5,y^2z^4,y^3z^3,y^4z^2,y^5z,y^6 \langle$ 
\item $f$ and $g \in \langle  x^iy^jz^{6-i-j} \rangle$, where $2\leq i \leq 6, 0\leq j \leq 6$ and $i+j\leq 6$ 
\end{enumerate}
\label{gennotstable}

\end{thm}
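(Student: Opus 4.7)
The plan is to prove both directions in the coordinate system provided by case 1 of Theorem \ref{notstable2}, reducing everything to linear algebra on the columns of the coefficient matrix. A short computation yields
\[
e_{ijkl}(-1/2) \;=\; \tfrac{3}{2}(i+k) - 6,
\]
so the vanishing hypothesis is equivalent to $m_{ijkl} = 0$ for every quadruple with $i + k \leq 3$; I will use this form throughout.

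For the forward direction I check each case by a monomial computation. In Case 1, $f_{ij}=0$ for $i \leq 3$ makes both $f$-entries in $m_{ijkl} = f_{ij}g_{kl} - f_{kl}g_{ij}$ vanish whenever $i+k \leq 3$. In Case 2, $f_{ij}=0$ for $i \leq 2$ handles every pair $(i,k)$ except the asymmetric $\{0,3\}$, for which the complementary vanishing $g_{0l}=0$ kills the surviving term. Case 3 is the cleanest: $f_{ij}=g_{ij}=0$ for $i \leq 1$ gives a zero row to every minor with $i+k \leq 3$.

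For the converse I introduce for each $a \in \{0,1,2,3\}$ the subspace
\[
V_a \;=\; \mathrm{span}\bigl\{(f_{aj}, g_{aj})^T : 0 \leq j \leq 6 - a\bigr\} \;\subseteq\; \mathbb{C}^2,
\]
and observe that the vanishing of all minors $m_{ijkl}$ with $(i,k)=(a,b)$ is exactly $\dim(V_a + V_b) \leq 1$. The hypothesis therefore says $V_a + V_b$ is at most one-dimensional for every pair $(a,b) \in \{(0,0),(0,1),(0,2),(0,3),(1,1),(1,2)\}$. I split according to $V_0$ and $V_1$. If $V_0 \neq 0$, after possibly swapping $f$ and $g$ I assume $f_{0j_0} \neq 0$ for some $j_0$; choosing $(p,q)$ spanning $V_0$ (so $p \neq 0$) and replacing $g$ by $g' = g - (q/p)f$ forces $g'_{0j}=0$ for every $j$, and the remaining constraints $(0,1),(0,2),(0,3)$ then yield $g'_{1j}=g'_{2j}=g'_{3j}=0$; relabeling this gives Case 1. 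If $V_0 = 0$ but $V_1 \neq 0$, an analogous change of generators preserves the vanishing of $g_{0j}$ while producing $g'_{1j}=0$, and $(1,2)$ then forces $g'_{2j}=0$; relabeling gives Case 2. If $V_0 = V_1 = 0$, then $f_{0j}=f_{1j}=g_{0j}=g_{1j}=0$, which is Case 3 with no further work.

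The principal technical point is to verify that each replacement $g \mapsto g - (q/p)f$ is a bona fide change of generators (clear, since the $g$-coefficient remains $1$) and that the Pl\"{u}cker vanishings --- invariant under change of generators up to a global scalar --- continue to constrain the updated subspaces $V'_a$ in the same manner. Once that is in place, the remaining work is careful bookkeeping of the $f \leftrightarrow g$ swap needed to convert my symmetric output into the asymmetric form of Cases 1 and 2.
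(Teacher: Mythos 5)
Your argument is correct and is, at bottom, the same computation the paper performs: both proofs row--reduce the $2\times 27$ coefficient matrix by replacing one generator with a combination such as $g-(q/p)f$ and then branch on which coefficients can be made to vanish. The difference is organizational. The paper processes the vanishing minors one at a time in a fixed ordering, with a three-way branching at each step ($f_{ij}=g_{ij}=0$, or $g_{ij}\neq 0$, or $f_{ij}\neq 0$), and leaves the bookkeeping as a sketched algorithm; you instead package the hypothesis as rank conditions on the column blocks $V_0,\dots,V_3$ and obtain the three cases from a clean trichotomy on whether $V_0$, then $V_1$, vanishes. Your version is the more transparent and complete of the two, and the computation $e_{ijkl}(-1/2)=\tfrac{3}{2}(i+k)-6$ correctly identifies the intended hypothesis as ``all minors with $i+k\leq 3$ vanish'' (the displayed list in case 1 of Theorem \ref{notstable2} omits the pair $(2,2)$, evidently a typo, since the solution list in the paper's own proof contains $\{0,0,2,2\}$, $\{0,1,2,1\}$, etc.). One small imprecision to fix: for $a\neq b$ the vanishing of all minors with $(i,k)=(a,b)$ is \emph{not} equivalent to $\dim(V_a+V_b)\leq 1$; it is equivalent to ``$V_a=0$, or $V_b=0$, or $V_a$ and $V_b$ lie on a common line'' (take $V_b=0$ and $\dim V_a=2$ for a counterexample). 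This does not damage your proof, because in each branch you only invoke the valid implication that a \emph{nonzero} $V_a$ forces $V_b$ into its span, but the sentence as written should be corrected.
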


\begin{proof}
Let us assume $\mathcal{P}$ is not stable and that its Pl\"{u}cker coordinates $(m_{ijkl})$ must vanish for all $i,j,k$ and $l$ satisfying $i+k=r$ and $j+l=s$ for all the pairs $(r,s)$ in case $1$ of Theorem \ref{notstable2}. Using the relations $m_{ijkl}=-m_{klij}$ and $m_{ijij}=0$ we can compute the minimal set of values $\{i,j,k,l\}$ (in order) so that the $m_{ijkl}$ vanish. 

In other words, we find all integers $i,j,k$ and $l$  subject to the restrictions
\begin{enumerate}[(i)]
 \item $0\leq i,j,k,l \leq 6$,
\item $ i+j \leq 6$,
\item $k+l\leq 6$, and
\item $(i<k)\vee(i=k \wedge j<l)$
\end{enumerate}
satisfying the inequality
\[
(2i+2k+j+l-12)-1/2(2j+2l+i+k-12)<0
\]
 
All possible solutions $\{i,j,k,l\}$ (in order) are:

\begin{eqnarray*}
\{0, 0, 0, 1\}, \{0, 0, 0, 2\}, \{0, 0, 0, 3\}, \{0, 0, 0, 4\}, \{0, 0, 0, 5\}, \{0, 0, 0, 6\}, \{0, 0, 1, 0\},\\
\{0, 0, 1, 1\}, \{0, 0, 1, 2\}, \{0, 0, 1, 3\}, \{0, 0, 1, 4\}, \{0, 0, 1, 5\}, \{0, 0, 2, 0\}, \{0, 0, 2, 1\}, \\
\{0, 0, 2, 2\}, \{0, 0, 2, 3\}, \{0, 0, 2, 4\}, \{0, 0, 3, 0\}, \{0, 0, 3, 1\}, \{0, 0, 3, 2\}, \{0, 0, 3, 3\}, \\
\{0, 1, 0, 2\}, \{0, 1, 0, 3\}, \{0, 1, 0, 4\}, \{0, 1, 0, 5\}, \{0, 1, 0, 6\}, \{0, 1, 1, 0\}, \{0, 1, 1, 1\}, \\
\{0, 1, 1, 2\}, \{0, 1, 1, 3\}, \{0, 1, 1, 4\}, \{0, 1, 1, 5\}, \{0, 1, 2, 0\}, \{0, 1, 2, 1\}, \{0, 1, 2, 2\}, \\
\{0, 1, 2, 3\}, \{0, 1, 2, 4\}, \{0, 1, 3, 0\}, \{0, 1, 3, 1\}, \{0, 1, 3, 2\}, \{0, 1, 3, 3\}, \{0, 2, 0, 3\}, \\
\{0, 2, 0, 4\}, \{0, 2, 0, 5\}, \{0, 2, 0, 6\}, \{0, 2, 1, 0\}, \{0, 2, 1, 1\}, \{0, 2, 1, 2\}, \{0, 2, 1, 3\}, \\
\{0, 2, 1, 4\}, \{0, 2, 1, 5\}, \{0, 2, 2, 0\}, \{0, 2, 2, 1\}, \{0, 2, 2, 2\}, \{0, 2, 2, 3\}, \{0, 2, 2, 4\}, \\
\{0, 2, 3, 0\}, \{0, 2, 3, 1\}, \{0, 2, 3, 2\}, \{0, 2, 3, 3\}, \{0, 3, 0, 4\}, \{0, 3, 0, 5\}, \{0, 3, 0, 6\}, \\
\{0, 3, 1, 0\}, \{0, 3, 1, 1\}, \{0, 3, 1, 2\}, \{0, 3, 1, 3\}, \{0, 3, 1, 4\}, \{0, 3, 1, 5\}, \{0, 3, 2, 0\}, \\
\{0, 3, 2, 1\}, \{0, 3, 2, 2\}, \{0, 3, 2, 3\}, \{0, 3, 2, 4\}, \{0, 3, 3, 0\}, \{0, 3, 3, 1\}, \{0, 3, 3, 2\}, \\
\{0, 3, 3, 3\}, \{0, 4, 0, 5\}, \{0, 4, 0, 6\}, \{0, 4, 1, 0\}, \{0, 4, 1, 1\}, \{0, 4, 1, 2\}, \{0, 4, 1, 3\}, \\
\{0, 4, 1, 4\}, \{0, 4, 1, 5\}, \{0, 4, 2, 0\}, \{0, 4, 2, 1\}, \{0, 4, 2, 2\}, \{0, 4, 2, 3\}, \{0, 4, 2, 4\}, \\
\{0, 4, 3, 0\}, \{0, 4, 3, 1\}, \{0, 4, 3, 2\}, \{0, 4, 3, 3\}, \{0, 5, 0, 6\}, \{0, 5, 1, 0\}, \{0, 5, 1, 1\}, \\
\{0, 5, 1, 2\}, \{0, 5, 1, 3\}, \{0, 5, 1, 4\}, \{0, 5, 1, 5\}, \{0, 5, 2, 0\}, \{0, 5, 2, 1\}, \{0, 5, 2, 2\}, \\
\{0, 5, 2, 3\}, \{0, 5, 2, 4\}, \{0, 5, 3, 0\}, \{0, 5, 3, 1\}, \{0, 5, 3, 2\}, \{0, 5, 3, 3\}, \{0, 6, 1, 0\}, \\
\{0, 6, 1, 1\}, \{0, 6, 1, 2\}, \{0, 6, 1, 3\}, \{0, 6, 1, 4\}, \{0, 6, 1, 5\}, \{0, 6, 2, 0\}, \{0, 6, 2, 1\}, \\
\{0, 6, 2, 2\}, \{0, 6, 2, 3\}, \{0, 6, 2, 4\}, \{0, 6, 3, 0\}, \{0, 6, 3, 1\}, \{0, 6, 3, 2\}, \{0, 6, 3, 3\}, 
\end{eqnarray*}
\begin{eqnarray*}
\{1, 0, 1, 1\}, \{1, 0, 1, 2\}, \{1, 0, 1, 3\}, \{1, 0, 1, 4\}, \{1, 0, 1, 5\}, \{1, 0, 2, 0\}, \{1, 0, 2, 1\}, \\
\{1, 0, 2, 2\}, \{1, 0, 2, 3\}, \{1, 0, 2, 4\}, \{1, 1, 1, 2\}, \{1, 1, 1, 3\}, \{1, 1, 1, 4\}, \{1, 1, 1, 5\}, \\
\{1, 1, 2, 0\}, \{1, 1, 2, 1\}, \{1, 1, 2, 2\}, \{1, 1, 2, 3\}, \{1, 1, 2, 4\}, \{1, 2, 1, 3\}, \{1, 2, 1, 4\}, \\
\{1, 2, 1, 5\}, \{1, 2, 2, 0\}, \{1, 2, 2, 1\}, \{1, 2, 2, 2\}, \{1, 2, 2, 3\}, \{1, 2, 2, 4\}, \{1, 3, 1, 4\}, \\
\{1, 3, 1, 5\}, \{1, 3, 2, 0\}, \{1, 3, 2, 1\}, \{1, 3, 2, 2\}, \{1, 3, 2, 3\}, \{1, 3, 2, 4\}, \{1, 4, 1, 5\}, \\
\{1, 4, 2, 0\}, \{1, 4, 2, 1\}, \{1, 4, 2, 2\}, \{1, 4, 2, 3\}, \{1, 4, 2, 4\}, \{1, 5, 2, 0\}, \{1, 5, 2, 1\}, \\
\{1, 5, 2, 2\}, \{1, 5, 2, 3\}, \{1, 5, 2, 4\}\,\,\,
\end{eqnarray*}

The question then is how to determine which coefficients in the defining polynomials of the generators need to vanish.

Note that we have introduced an ordering on the Pl\"{u}cker coordinates coming from the restrictions on $i,j,k$ and $l$. So, the first step is to look at the equation $m_{ijkl}=0$ for the first term $\{i,j,k,l\}$ in the list above, namely we look at the equation $m_{0001}=0$. It follows that either
\begin{enumerate}[(1)]
\item $f_{00}=g_{00}=0$ or
\item $g_{00}\neq 0$ or
\item $f_{00}\neq 0$
\end{enumerate}

Moreover, if $(2)$ (or $(3)$ by symmetry) holds, then taking $f'=f-\frac{f_{00}}{g_{00}}g$ we can assume $f_{00}=0$ and we must have $f_{01}=0$. 

The next step then is, at each of the cases above, to look at the next vanishing condition $m_{0002}=0$ coming from the second term $\{i,j,k,l\}$ in the list. Again there are three possibilities: Either $f_{00}=g_{00}=0$ or $g_{02}\neq 0$ or $f_{02}\neq 0$. 

We proceed in this manner until there are no more equations $m_{ijkl}=0$ to solve.

In fact our list tells us that $m_{00kl}$ vanish for all (appropriate) $0\leq k \leq 3$ and $0 \leq l \leq 6$. Thus, our algorithm tells us that if we are in the situation of case $(2)$, then one of the generators belongs to $\rangle x^kj^lz^{6-k-l} \langle$ for all $kl$ such that $m_{00kl}=0$.  And, by symmetry, we reach the same conclusion if $(3)$ holds. A similar reasoning applies to the next set of vanishing conditions $m_{01kl}=0$ and so on.

It is important to note, however, that at each step, when solving the equations $m_{ijkl}=0$ we have to take into account whether there are  or there are not previous conditions on the coefficients $f_{ij},g_{ij},f_{kl}$ and $g_{kl}$.

Following the sketched algorithm we obtain the desired geometric description of the pencil $\mathcal{P}$.
\end{proof}

Note that the same algorithm outlined above in the proof of Theorem \ref{gennotstable} can be applied whenever $\mathcal{P}$ is unstable (resp. not stable) and satisfies one of the vanishing conditions in anyone of the cases in Theorem \ref{unstable2} (resp. \ref{notstable2}). However, the computations involved are very lengthy and the assistance of a computer is needed. And even the corresponding statements as in Theorem \ref{gennotstable} require several pages to be presented.

The complete geometric description of the stability conditions in terms of equations for the generators is available upon request. And instead of exhibiting these tiresome results we will present next (without proofs) only those results that are essential in the study of Halphen pencils of index two (Section \ref{halphen}) and we will mostly focus on \textbf{proper} pencils:

\begin{defi}
A pencil $\mathcal{P} \in \mathscr{P}_6$ is called proper if any two curves on it intersect properly, meaning its base locus is zero dimensional, i.e. it consists of a finite number of points.
\end{defi}

\subsection{Equations associated do nonstability}

\begin{thm}
Let $\mathcal{P}\in \mathscr{P}_6$ be a proper pencil which contains a curve of the form $3L+C$, where $L$ is a line and $C$ is a cubic (possibly reducible). Then $\mathcal{P}$ is not stable if and only if there exist coordinates in $\mathbb{P}^2$ and generators $C_f$ and $C_g$ of $\mathcal{P}$ such that:

\begin{enumerate}[(a)]
\item $f\in \langle x^3z^3,x^3yz^2,x^3y^2z,x^3y^3,x^4z^2,x^4yz,x^4y^2,x^5z,x^5y,x^6 \rangle$ with $f_{30}\neq 0$ and $g$ satisfies

\vspace{0.5cm}
\begin{enumerate}[({a}1)]
\item  $g_{00}=\ldots=g_{05}=0,g_{10}=\ldots=g_{13}=0,g_{20}=g_{21}=0$ or
\item $g_{00}=\ldots=g_{04}=0,g_{10}=\ldots=g_{13}=0,g_{20}=g_{21}=g_{22}=g_{31}=g_{40}=0$ 
\end{enumerate}
\vspace{0.5cm}

\item $f\in \langle x^3yz^2,x^3y^2z,x^3y^3,x^4z^2,x^4yz,x^4y^2,x^5z,x^5y,x^6 \rangle$ with $f_{31}\neq 0$ and $g$ satisfies  

\vspace{0.5cm}
\begin{enumerate}[({b}1)]
\item $g_{00}=\ldots=g_{05}=0,g_{10}=g_{11}=g_{12}=0$ or
\item $g_{00}=\ldots=g_{04}=0,g_{10}=g_{11}=g_{12}=g_{20}=0$ or
\item  $g_{00}=\ldots=g_{03}=0,g_{10}=g_{11}=g_{12}=g_{20}=g_{21}=g_{30}=0$
\end{enumerate}
\vspace{0.5cm}

\item $f\in \langle x^3y^2z,x^3y^3,x^4z^2,x^4yz,x^4y^2,x^5z,x^5y,x^6 \rangle$ with $f_{32}\neq 0$ and
either

\vspace{0.5cm}
\begin{enumerate}[({c}1)]
\item $g$ satisfies $g_{00}=\ldots=g_{03}=0,g_{10}=g_{11}=0$ or
\item  $m_{ijkl}=0$ for $i,j,k,l$ (in order) in the list below
\[
\{0,3,4,0\}, \{1,2,4,0\}, \{2,1,4,0\}, \{3,0,4,0\}
\]
 and $g$ satisfies $g_{00}=g_{01}=g_{02}=g_{10}=g_{11}=g_{20}=0$ 
\end{enumerate}
\vspace{0.5cm}

\item $f\in \langle x^3y^3,x^4z^2,x^4yz,x^4y^2,x^5z,x^5y,x^6 \rangle$ with $f_{33}\neq 0$ and
either
\vspace{0.5cm}
\begin{enumerate}[({d}1)]
\item $g$ satisfies $g_{00}=\ldots=g_{03}=0,g_{10}=0$ or
\item $m_{ijkl}=0$ for $i,j,k,l$ (in order) in the list below
\[
\{0,3,4,0\}, \{1,1,4,0\}
\]
 and $g$ satisfies $g_{00}=g_{01}=g_{02}=g_{10}=0$ or
\item $m_{ijkl}=0$ for $i,j,k,l$ (in order) in the list below
\begin{eqnarray*}
\{0,2,4,0\},\{0,2,4,1\}, \{0,2,5,0\}, \{0,3,4,0\}, \{1,1,4,0\}, \{1,1,4,1\}, \{1,1,5,0\}, \\
\{2,0,4,0\}, \{2,1,4,0\}, \{3,0,4,0\}\,\,
\end{eqnarray*}
 and $g$ satisfies $g_{00}=g_{01}=g_{10}=0$
\end{enumerate}
\vspace{0.5cm}
\end{enumerate}
\label{nsreg}
\end{thm}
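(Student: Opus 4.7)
The plan is to specialize the algorithm from the proof of Theorem \ref{gennotstable} to pencils containing a curve of the form $3L+C$. First, I would exploit the $PGL(3)$-freedom to choose coordinates with $L=\{x=0\}$ and take $f$ to be a defining polynomial of $3L+C$. Then $f=x^{3}\widetilde{C}(x,y,z)$ for a cubic form $\widetilde{C}$, forcing $f_{ij}=0$ whenever $i<3$, so $f$ lies in the $10$-dimensional subspace $\langle x^{3}z^{3},x^{3}yz^{2},x^{3}y^{2}z,x^{3}y^{3},x^{4}z^{2},x^{4}yz,x^{4}y^{2},x^{5}z,x^{5}y,x^{6}\rangle$. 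The four cases (a)--(d) arise by using the residual subgroup of $PGL(3)$ fixing $L$ (acting on the pencil of points of $L$ and rescaling $x$) to normalize the intersection behavior of $\widetilde{C}$ with $L$ at the point $[0:0:1]$: the smallest $j$ with $f_{3j}\neq 0$ is $0,1,2$ or $3$ according to whether $[0:0:1]$ avoids $\widetilde{C}$, is a transverse smooth intersection, a tangential intersection, or a triple-contact (or singular) point of $\widetilde{C}$.

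Next, by Proposition \ref{hm} and Theorem \ref{notstable2}, $\mathcal{P}$ is not stable if and only if, after some further coordinate change, all Pl\"ucker coordinates $m_{ijkl}$ whose indices $(i+k,j+l)$ lie in one of the seven listed $M_{rs}$-tables vanish. I would argue that this additional coordinate change may be taken inside the subgroup preserving both $L=\{x=0\}$ and the point $[0:0:1]$, since the destabilizing one-parameter subgroup in Proposition \ref{hm} is diagonal in the chosen frame. With $f$ already in its normalized form, I would then run the sequential-resolution algorithm from the proof of Theorem \ref{gennotstable}: process the conditions $m_{ijkl}=0$ in the lexicographic order used there, and at each step either replace $g$ by $g'=g-(g_{ij}/f_{ij})f$ (possible only when some $f_{ij}$ entering the current $M_{rs}$ is nonzero) or conclude that the corresponding $g$-coefficient must vanish. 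Properness of $\mathcal{P}$ ensures $g$ is not divisible by $x$, which prevents the procedure from collapsing the pencil to a single component.

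The main obstacle will be the branching in the case analysis. At each $(r,s)$ the algorithm can take several paths, depending on which $f_{3j}$ with $j\geq j_{0}$ are still available for reduction; the subcases (a1)--(a2), (b1)--(b3), (c1)--(c2), (d1)--(d3) collect the possible terminal patterns indexed by which critical value $a\in\{-1/2,-2/7,-1/5,0,1/4,2/5,1\}$ witnesses the non-stability. In case (a), where $f_{30}\neq 0$, the monomials $x^{3}y^{j}z^{3-j}$ allow the successive clearing of $g_{00},\dots,g_{05}$ and then $x^{4}y^{j}z^{2-j}$, $x^{5}y^{j}z^{1-j}$ clear further $g_{1j}$ and $g_{2j}$, splitting into (a1) or (a2) according to whether $a$ lies in the lower or higher part of the critical set. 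In cases (c) and (d), fewer $f$-coefficients are available, so certain Pl\"ucker identities such as $m_{0340}=0$ or $m_{1140}=0$ cannot be resolved by elementary row operations and must be imposed as additional hypotheses, producing the explicit lists in (c2), (d2), (d3).

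The converse direction is routine: for each subcase, a direct check verifies that the specified pair $(f,g)$ makes all $m_{ijkl}$ in the relevant $M_{rs}$-table vanish, hence $\mathcal{P}$ is not stable by Theorem \ref{notstable2}. The overall scheme is entirely computational once the normalization of $f$ is fixed; its mechanical nature is what compels the use of a computer algebra system, as noted in the paper after Theorem \ref{gennotstable}.
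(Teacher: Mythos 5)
Your overall strategy coincides with the paper's: Theorem \ref{nsreg} is stated without proof, the text indicating only that the sequential algorithm from the proof of Theorem \ref{gennotstable} is run (with computer assistance) for each of the seven cases of Theorem \ref{notstable2}, and the output restricted to proper pencils containing a $3L+C$ member. Your identification of the mechanism is also essentially right: once $f=x^{3}\widetilde{C}$, every minor $m_{ijkl}$ with $i<3\leq k$ degenerates to $-f_{kl}\,g_{ij}$, so each nonzero $f_{kl}$ converts a vanishing condition directly into $g_{ij}=0$ (no row operation is even needed for these); the four cases (a)--(d) are indexed by the order of contact of $\widetilde{C}$ with $L$ at the distinguished point; and the residual lists in (c2), (d2), (d3) are exactly the minors that cannot be resolved this way. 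Properness ruling out $x\mid g$ is also the right observation.

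There is, however, a genuine gap in your ``only if'' direction. You normalize $L=\{x=0\}$ first and then assert that the coordinate change furnished by Theorem \ref{notstable2} ``may be taken inside the subgroup preserving $L$ and $[0:0:1]$, since the destabilizing one-parameter subgroup is diagonal in the chosen frame.'' This is circular: the destabilizing frame is whatever Proposition \ref{hm} produces, and there is no a priori reason that its flag (the point $(0:0:1)$ inside the line $\{x=0\}$) is adapted to $L$ --- a non-stable proper pencil containing $3L+C$ could conceivably be destabilized only by a one-parameter subgroup centered at a bad base point away from $L$, in which frame $f$ is not divisible by $x^{3}$. The assertion that every destabilization of such a pencil can be traded for one of the listed normal forms is precisely the content of the theorem, not a starting point; establishing it requires running the algorithm in the unconstrained destabilizing coordinates for all seven cases of Theorem \ref{notstable2} and checking a posteriori which terminal normal forms are compatible with properness and with the presence of a $3L+C$ member. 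That is what the paper's (unexhibited) computation does, and it is the step your proposal elides. The converse direction as you describe it is fine.
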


\begin{thm}
Let $\mathcal{P}\in \mathscr{P}_6$ be a proper pencil which contains a curve of the form $2L+Q$, where $L$ is a line and $Q$ is a quartic (possibly reducible). Then $\mathcal{P}$ is not stable if and only if there exist coordinates in $\mathbb{P}^2$ and generators $C_f$ and $C_g$ of $\mathcal{P}$ such that:

\begin{enumerate}[(a)]
\item $f\in \langle x^2z^4,x^2yz^3,x^2y^2z^2,x^2y^3z,x^2y^4, x^iy^jz^{6-i-j}  \rangle$, with $3\leq i \leq 6$, $0\leq j \leq 6, i+j \leq 6$ plus $f_{20}\neq 0$ and $g\in \langle y^6,xy^5,x^2y^4,x^3y^3,x^4y^2,x^5y,x^6 \rangle$ (in particular, $C_g$ is unstable)

\item $f\in \langle x^2yz^3,x^2y^2z^2,x^2y^3z,x^2y^4, x^iy^jz^{6-i-j}  \rangle$, with $3\leq i \leq 6,0\leq j \leq 6, i+j \leq 6$ plus $f_{21}\neq 0$ and $g$ satisfies 

\vspace{0.5cm}
\begin{enumerate}[({b}1)]
\item $g_{00}=\ldots=g_{05}=0,g_{10}=\ldots=g_{14}=0,g_{20}=g_{21}=g_{22}=g_{30}=g_{31}=0$ or
\item $g_{00}=\ldots=g_{04}=0,g_{10}=\ldots=g_{13}=0,g_{20}=g_{21}=g_{22}=g_{30}=g_{31}=g_{40}=0$ 
\end{enumerate}
\vspace{0.5cm}
in particular, $C_g$ is unstable.
\vspace{0.5cm}

\item $f\in \langle x^2y^2z^2,x^2y^3z,x^2y^4, x^iy^jz^{6-i-j}  \rangle$, with $3\leq i \leq 6,0\leq j \leq 6, i+j \leq 6$ plus $f_{22}\neq 0$ and either 

\vspace{0.5cm}
\begin{enumerate}[({c}1)]
\item $g$ satisfies $g_{00}=\ldots=g_{05}=0,g_{10}=\ldots=g_{13}=0,g_{20}=g_{21}=0$ or
\item $f_{30}=0$ and $g$ satisfies $g_{00}=\ldots=g_{04}=0,g_{10}=\ldots=g_{13}=0,g_{20}=g_{21}=g_{22}=g_{30}=0$ or
\item $m_{ijkl}=0$ for $i,j,k,l$ (in order) in the list below
\[
\{0,4,3,0\}, \{1,3,3,0\}, \{3,0,3,1\}, \{3,0,4,0\}
\]
and $g$ satisfies $g_{00}=\ldots=g_{03}=0,g_{10}=g_{11}=g_{12}=g_{20}=g_{21}=g_{22}=g_{30}=0$ 
\end{enumerate}
\vspace{0.5cm}
In particular, $(0:0:1)$ has multiplicity $\geq 3$ in  $C_g$.
\vspace{0.5cm}

\item $f\in \langle x^2y^3z,x^2y^4, x^iy^jz^{6-i-j}  \rangle$, with $3\leq i \leq 6,0\leq j \leq 6, i+j \leq 6$ plus $f_{23}\neq 0$ and either 

\vspace{0.5cm}
\begin{enumerate}[({d}1)]
\item $m_{ijkl}=0$ for $i,j,k,l$ (in order) in the list below
\[
\{0,5,3,0\}, \{1,3,3,0\}, \{2,1,3,0\}
\]
and $g$ satisfies $g_{00}=\ldots=g_{04}=0,g_{10}=g_{11}=g_{12}=g_{20}=0$ or 
\item $m_{ijkl}=0$ for $i,j,k,l$ (in order) in the list below
\[
\{0,4,3,0\}, \{0,4,3,1\}, \{0,5,3,0\}, \{1,3,3,0\}, \{2,1,3,0\}, \{2,1,3,1\}, \{2,2,3,0\}
\]
and $g$ satisfies $g_{00}=\ldots=g_{03}=0,g_{10}=g_{11}=g_{12}=g_{20}=0$ or 
\item $m_{ijkl}=0$ for $i,j,k,l$ (in order) in the list below
\begin{eqnarray*}
\{0,3,3,0\}, \{0,3,3,1\}, \{0,3,4,0\}, \{0,4,3,0\}, \{1,2,3,0\}, \{1,2,3,1\}, \{1,2,4,0\},\\
\{1,3,3,0\}, \{2,1,3,0\}, \{2,1,3,1\}, \{2,1,4,0\}, \{2,2,3,0\}, \{3,0,3,1\}, \{3,0,4,0\}\,\,
\end{eqnarray*}
and $g$ satisfies $g_{00}=g_{01}=g_{02}=g_{10}=g_{11}=g_{20}=0$ 
\end{enumerate}
\vspace{0.5cm}
In particular, $(0:0:1)$ has multiplicity $\geq 3$ in  $C_g$.
\vspace{0.5cm}

\item $f\in \langle x^2y^4, x^iy^jz^{6-i-j}  \rangle$, with $3\leq i \leq 6,0\leq j \leq 6, i+j \leq 6$ plus $f_{24}\neq 0$ and either 
\vspace{0.5cm}
\begin{enumerate}[({e}1)]
\item $m_{ijkl}=0$ for $i,j,k,l$ (in order) in the list below
\[
\{0,6,3,0\}, \{1,3,3,0\}, \{2,0,3,0\}
\]
 and $g$ satisfies $g_{00}=\ldots=g_{05}=0,g_{10}=g_{11}=g_{12}=0$ or 
\item $m_{ijkl}=0$ for $i,j,k,l$ (in order) in the list below
\begin{eqnarray*}
\{0,4,3,0\}, \{0,4,3,1\}, \{0,5,3,0\}, \{1,2,3,0\}, \{1,2,3,1\}, \{1,3,3,0\}, \{2,0,3,0\},\\
\{2,0,3,1\}, \{2,1,3,0\}\,\,
\end{eqnarray*}
 and $g$ satisfies $g_{00}=\ldots=g_{03}=0,g_{10}=g_{11}=0$ or 
\item $m_{ijkl}=0$ for $i,j,k,l$ (in order) in the list below
\begin{eqnarray*}
\{0,3,3,0\}, \{0,3,3,1\}, \{0,3,3,2\}, \{0,3,4,0\}, \{0,4,3,0\}, \{0,4,3,1\}, \{0,5,3,0\},\\
 \{1,2,3,0\}, \{1,2,3,1\}, \{1,2,4,0\}, \{1,3,3,0\}, \{2,0,3,0\}, \{2,0,3,1\}, \{2,0,3,2\}, \\
\{2,1,3,0\}, \{2,1,3,1\}, \{2,2,3,0\}\,\,
\end{eqnarray*}
 and $g$ satisfies $g_{00}=g_{01}=g_{02}=g_{10}=g_{11}=0$ or 
\item $m_{ijkl}=0$ for $i,j,k,l$ (in order) in the list below
\begin{eqnarray*}
\{0, 2, 3, 0\}, \{0, 2, 3, 1\}, \{0, 2, 3, 2\}, \{0, 2, 4, 0\}, \{0, 2, 4, 1\}, \{0, 2, 5, 0\}, \{0, 3, 3, 0\}, \\
\{0, 3, 3, 1\}, \{0, 3, 4, 0\}, \{0, 4, 3, 0\}, \{1, 1, 3, 0\}, \{1, 1, 3, 1\}, \{1, 1, 3, 2\}, \{1, 1, 4, 0\}, \\
\{1, 1, 4, 1\}, \{1, 1, 5, 0\}, \{1, 2, 3, 0\}, \{1, 2, 3, 1\}, \{1, 2, 4, 0\}, \{1, 3, 3, 0\}, \{2, 0, 3, 0\}, \\
\{2, 0, 3, 1\}, \{2, 0, 3, 2\}, \{2, 0, 4, 0\}, \{2, 0, 4, 1\}, \{2, 0, 5, 0\},  \{2, 1, 3, 0\}, \{2, 1, 3, 1\}, \\
\{2, 1, 4, 0\}, \{2, 2, 3, 0\}, \{3, 0, 3, 1\}, \{3, 0, 4, 0\}\,\,
\end{eqnarray*}
 and $g$ satisfies $g_{00}=g_{01}=g_{10}=0$ 
\end{enumerate}
\vspace{0.5cm}
\end{enumerate}
\label{nsreg1}
\end{thm}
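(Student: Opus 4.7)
The plan is to specialize the algorithmic procedure used in the proof of Theorem~\ref{gennotstable} to the situation where $\mathcal{P}$ is known to contain a curve of the form $2L+Q$. Choose coordinates $[x,y,z]$ in $\mathbb{P}^2$ so that $L=\{x=0\}$, and take one generator to be $f=x^2\,q(x,y,z)$, where $q$ defines $Q$. This immediately forces $f_{ij}=0$ whenever $i\in\{0,1\}$, so the first two ``rows'' of coefficients of $f$ vanish from the outset; the remaining freedom is to modify the other generator $g$ by subtracting scalar multiples of $f$ and to rescale.

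The first step is to invoke Theorem~\ref{notstable2}: non-stability is equivalent to one of the seven listed vanishing patterns for the Pl\"{u}cker coordinates $(m_{ijkl})$ holding in suitable coordinates. For each of those seven patterns I run the algorithm used in the proof of Theorem~\ref{gennotstable}: process the conditions $m_{ijkl}=0$ in lexicographic order on $(i,j,k,l)$ and, at each step, invoke the trichotomy ``$f_{ij}=g_{ij}=0$, or $g_{ij}\neq 0$, or $f_{ij}\neq 0$'' together with the possibility of replacing $g$ by $g-\tfrac{g_{ij}}{f_{ij}}f$ (or vice versa) to normalize one coefficient to zero. Because $f_{ij}=0$ for $i\leq 1$, the early Pl\"{u}cker vanishings force the corresponding $g_{ij}$ to vanish and the algorithm branches only once it reaches indices with $i\geq 2$; the smallest $j$ for which $f_{2j}\neq 0$ then selects which of the cases (a)--(e) is in force.

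Next, translate these outcomes into the explicit coefficient conditions listed in (a)--(e) and their sub-cases. For each value of $j$ with $f_{2j}\neq 0$ and $f_{2j'}=0$ for $j'<j$, one uses $f_{2j}$ to clear $g_{2j}$, and the remaining Pl\"{u}cker conditions become constraints on the higher-index coefficients of $g$. The sub-cases (b1)--(b2), (c1)--(c3), (d1)--(d3), (e1)--(e4) then correspond to the different Pl\"{u}cker patterns among the seven of Theorem~\ref{notstable2} that remain compatible with the branch $f_{2j}\neq 0$; each one records a minimal list of supplementary $m_{ijkl}=0$ conditions together with the vanishing coefficients of $g$ that these force. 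Properness of $\mathcal{P}$ enters here to exclude degenerate possibilities: $f$ and $g$ can share no common factor, so $g$ is not divisible by $x$, and this is what justifies the ``in particular'' statements on the multiplicity of $(0\!:\!0\!:\!1)$ in $C_g$ and on the (un)stability of $C_g$.

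The principal obstacle is purely combinatorial: the lexicographic enumeration inside each of the seven non-stability patterns produces many branches, and showing exhaustiveness of (a)--(e) together with the listed sub-cases requires careful bookkeeping of which coefficients of $g$ have already been normalized at each step. As indicated in the discussion following Theorem~\ref{gennotstable}, this is where computer assistance becomes indispensable. The converse direction is comparatively routine: for each configuration in (a)--(e), one exhibits a destabilizing one-parameter subgroup by selecting the appropriate value of $a\in\{-1/2,-2/7,-1/5,0,1/4,2/5,1\}$ from Lemma~\ref{nsint} and verifying via Proposition~\ref{hm} that the listed vanishings force every surviving Pl\"{u}cker coordinate to satisfy $e_{ijkl}(a)\geq 0$.
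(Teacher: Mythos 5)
Your overall strategy --- run the algorithm from the proof of Theorem \ref{gennotstable} over each of the seven vanishing patterns of Theorem \ref{notstable2} and then prune by the hypotheses --- is the method the paper itself indicates for all of the results in Section \ref{gd}; the paper states Theorem \ref{nsreg1} without proof, explicitly deferring the lengthy computer-assisted computation. So in spirit you are on the paper's track. There is, however, a logical gap at the very first step of your forward direction. You fix coordinates with $L=\{x=0\}$, write $f=x^2q$, and then invoke Theorem \ref{notstable2} as though the vanishing pattern held in those same coordinates. Theorem \ref{notstable2} only produces \emph{some} coordinate system (equivalently, a destabilizing one-parameter subgroup diagonal in that system), and there is no a priori reason why the flag of that subgroup should be adapted to $L$, nor why the special member $2L+Q$ should be the distinguished generator $C_f$ of the resulting normal form rather than some other member $\lambda f+\mu g$. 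The correct order of operations is the reverse: first derive, in the destabilizing coordinates, the full list of normal forms for $(f,g)$ coming from each of the seven patterns (exactly as in Theorem \ref{gennotstable}), and only then impose properness and the existence of a member of the form $2L+Q$ to prune that list and to identify that member with $C_f$ and $L$ with $\{x=0\}$. That identification is part of what the theorem asserts, and it is precisely the step your sketch assumes rather than proves.

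Two smaller points. When $f_{ij}=0$ for $i\le 1$, the Pl\"ucker coordinate reads $m_{ijkl}=-f_{kl}\,g_{ij}$, so a condition $m_{ijkl}=0$ forces $g_{ij}=0$ only when it can be paired with some $(k,l)$ for which $f_{kl}\neq 0$; conditions with both $i\le 1$ and $k\le 1$ are vacuous. Your phrase ``the early Pl\"ucker vanishings force the corresponding $g_{ij}$ to vanish'' is right in outcome but must be routed through the nonvanishing coefficient $f_{2j}$, which is exactly why the cases (a)--(e) are indexed by the smallest $j$ with $f_{2j}\neq 0$ and why the lists of supplementary conditions $m_{ijkl}=0$ differ between sub-cases. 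Second, the ``in particular'' clauses do not come from properness: the multiplicity bound at $(0:0:1)$ in (c)--(d) and the unstability of $C_g$ in (a)--(b) follow directly from the listed vanishings of the low-order coefficients of $g$ (e.g.\ $g_{ij}=0$ for all $i+j\le 2$ gives multiplicity $\ge 3$ at $(0:0:1)$, and in (a) the absence of $z$ from $g$ makes $C_g$ a cone). Properness is used elsewhere, namely to discard the branches of the algorithm in which $f$ and $g$ acquire a common factor, such as Case 3 of Theorem \ref{gennotstable}.
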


\subsection{Equations associated to unstability}

\begin{thm}
A pencil $\mathcal{P}\in \mathscr{P}_6$ will satisfy the vanishing conditions in case $1$ of Theorem \ref{unstable2} if and only if we can find coordinates in $\mathbb{P}^2$ and generators $C_f$ and $C_g$ of $\mathcal{P}$ such that 
\begin{enumerate}[{Case} 1]
\item $f \in \langle x^5z,x^5y,x^6 \rangle$ and $g$ is arbitrary
\item $f \in \langle x^4y^2,x^5z,x^5y,x^6 \rangle$ and $g\in \rangle z^6,yz^5,y^2z^4\langle$ 
\item $f \in \langle x^4yz,x^4y^2,x^5z,x^5y,x^6 \rangle$ and $g\in \rangle z^6,yz^5,y^2z^4,y^3z^3\langle$ 
\item $f \in \langle x^4z^2,x^4yz,x^4y^2,x^5z,x^5y,x^6 \rangle$ and $g\in \rangle z^6,yz^5,y^2z^4,y^3z^3,y^4z^2\langle$ 
\item $f \in \langle x^3y^3,x^4z^2,x^4yz,x^4y^2,x^5z,x^5y,x^6 \rangle$ 
\subitem and $g\in \rangle z^6,yz^5,y^2z^4,y^3z^3,y^4z^2,y^5z\langle$ 
\item $f \in \langle x^3y^2z,x^3y^3,x^4z^2,x^4yz,x^4y^2,x^5z,x^5y,x^6 \rangle$ 
\subitem and $g\in \rangle z^6,yz^5,y^2z^4,y^3z^3,y^4z^2,y^5z,y^6,xz^5,xyz^4,xy^2z^3\langle$ 
\item $f \in \langle x^3yz^2,x^3y^2z,x^3y^3,x^4z^2,x^4yz,x^4y^2,x^5z,x^5y,x^6 \rangle$ 
\subitem and $g\in \rangle z^6,yz^5,y^2z^4,y^3z^3,y^4z^2,y^5z,y^6,xz^5,xyz^4,xy^2z^3,xy^3z^2\langle$ 
\item $f \in \langle x^3z^3,x^3yz^2,x^3y^2z,x^3y^3,x^4z^2,x^4yz,x^4y^2,x^5z,x^5y,x^6 \rangle$ 
\subitem and $g\in \rangle z^6,yz^5,y^2z^4,y^3z^3,y^4z^2,y^5z,y^6,xz^5,xyz^4,xy^2z^3,xy^3z^2,xy^4z,xy^5\langle$ 
\item $f \in \langle x^2y^4,x^3z^3,x^3yz^2,x^3y^2z,x^3y^3,x^4z^2,x^4yz,x^4y^2,x^5z,x^5y,x^6 \rangle$ 
\subitem and $g\in \rangle z^6,yz^5,y^2z^4,y^3z^3,y^4z^2,y^5z,y^6,xz^5,xyz^4,xy^2z^3,xy^3z^2,xy^4z,x^2z^4\langle$ 
\end{enumerate}
\label{equns}
\end{thm}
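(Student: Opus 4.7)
The plan is to apply the algorithmic procedure sketched in the proof of Theorem \ref{gennotstable} to the vanishing conditions in case 1 of Theorem \ref{unstable2}. First I would translate the list of pairs $(r,s)$ given there into the corresponding minimal list of ordered 4-tuples $(i,j,k,l)$, subject to $0\leq i,j,k,l \leq 6$, $i+j\leq 6$, $k+l \leq 6$ and either $i<k$ or ($i=k$ and $j<l$). This produces an explicit (finite) list of indices at which the equation $m_{ijkl}=f_{ij}g_{kl}-f_{kl}g_{ij}=0$ must be imposed. The (easier) converse direction is then a direct monomial-by-monomial verification: for each of the nine claimed configurations and each forbidden pair $(r,s)$, every $2\times 2$ minor $m_{ijkl}$ with $i+k=r,\,j+l=s$ has both of its $f$-entries or both of its $g$-entries outside the support of $f$ or $g$ respectively, so it vanishes automatically.

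For the forward direction I would process the equations $m_{ijkl}=0$ in lexicographic order. Each such equation branches into three possibilities: (a) $f_{ij}=g_{ij}=0$, so the monomial $x^i y^j z^{6-i-j}$ is absent from both generators; (b) $g_{ij}\neq 0$, in which case the substitution $f'=f-(f_{ij}/g_{ij})g$ normalizes $f_{ij}$ to zero and $m_{ijkl}=0$ then forces $f_{kl}=0$; or (c) the symmetric case $f_{ij}\neq 0$. The position of the first index where (b) or (c) occurs — the \emph{pivot} — determines which of the nine cases of the statement one lands in: Case~1 corresponds to the pivot sitting at $(i,j)=(5,0)$ (i.e.\ the coefficient of $x^5 z$), Case~2 to $(4,2)$, Case~3 to $(4,1)$, Case~4 to $(4,0)$, and so on down through Case~9, whose pivot sits at $(2,4)$. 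The nested supports $\langle x^5z,x^5y,x^6\rangle\subset\langle x^4y^2,\ldots\rangle\subset\cdots$ visible in the statement reflect the fact that later pivots are lexicographically earlier, so every previously established pivot monomial (and every monomial to its right) remains admissible in the support of $f$.

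Once the pivot is fixed, every remaining equation $m_{ijkl}=0$ in which $f_{kl}$ is the pivot coefficient (nonzero) and $f_{ij}$ has already been forced to zero becomes the single linear condition $g_{ij}=0$. Collecting these conditions yields precisely the membership constraint $g\in\,\rangle z^6,\, yz^5,\,\ldots\,\langle$ stated in each case; the set of excluded monomials for $g$ grows exactly as the admissible support for $f$ grows, since a later pivot forces more coefficients of $g$ to vanish in order to kill the additional minors that become nontrivial.

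The main obstacle is the combinatorial bookkeeping rather than any conceptual difficulty: the branching tree is broad, and at each node one must keep track of which coefficients have been forced to zero by earlier equations, which have been normalized away by row operations on $\begin{pmatrix}f_{ij}\\g_{ij}\end{pmatrix}$, and which remain free parameters. As the author already notes in the paragraph after the proof of Theorem \ref{gennotstable}, carrying this out by hand for each of the six cases of Theorem \ref{unstable2} is extremely lengthy, and computer assistance is genuinely needed to enumerate the branches exhaustively and to check non-redundancy of the resulting list.
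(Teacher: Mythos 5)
Your plan is essentially the paper's own proof: the paper states Theorem \ref{equns} without proof precisely because it is obtained by running the branching algorithm from the proof of Theorem \ref{gennotstable} (translate the $(r,s)$-list into ordered quadruples, process the equations $m_{ijkl}=0$ with the three-way split and the row reduction $f'=f-(f_{ij}/g_{ij})g$, and let the position of the first pivot determine which of the nine cases one lands in), with the bookkeeping done by computer. One small correction to your converse verification: it is not true that every required minor has both $f$-entries or both $g$-entries outside the supports --- e.g.\ in Case~2 the minor $m_{ijkl}$ with $(i,j)=(0,2)$, $(k,l)=(4,2)$ (so $(r,s)=(4,4)$) has $f_{42}$ possibly nonzero and $g_{42}$ arbitrary, and it vanishes only because the whole column $(f_{02},g_{02})$ is zero; so you must also include the case where some monomial $x^iy^jz^{6-i-j}$ is absent from \emph{both} generators.
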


\begin{thm}
Let $\mathcal{P}\in \mathscr{P}_6$ be a proper pencil which contains a curve of the form $4L+Q$, where $L$ is a line and $Q$ is a conic (possibly reducible). Then $\mathcal{P}$ is unstable if and only if there exist coordinates in $\mathbb{P}^2$ and generators $C_f$ and $C_g$ of $\mathcal{P}$ such that:
\begin{enumerate}[(a)]
\item $f\in \langle x^4z^2,x^4yz,x^4y^2,x^5z,x^5y,x^6 \rangle$ plus $f_{40}\neq 0$ and either $g$ satisfies

\vspace{0.5cm}
\begin{enumerate}[({a}1)]
\item $g_{00}=\ldots=g_{04}=0$ or
\item $g_{00}=\ldots=g_{03}=0,g_{10}=g_{11}=g_{12}=g_{20}=0$ (in particular, $(0:0:1)$ has multiplicity $\geq 3$ in  $C_g$.).
\end{enumerate}
\vspace{0.5cm}

\item $f\in \langle x^4yz,x^4y^2,x^5z,x^5y,x^6 \rangle$ plus $f_{41}\neq 0$ and $g$ satisfies $g_{00}=\ldots=g_{03}=0$.

\item $f\in \langle x^4y^2,x^5z,x^5y,x^6 \rangle$ plus $f_{42}\neq 0$ and $g$ satisfies $g_{00}=g_{01}=g_{02}=0$.
\end{enumerate}
\label{unsreg}
\end{thm}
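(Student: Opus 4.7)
The plan is to apply the Plücker-coordinate algorithm exemplified in the proof of Theorem~\ref{gennotstable}, now specialized to the hypothesis that $\mathcal{P}$ contains a curve of the form $4L+Q$.

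For the ($\Leftarrow$) direction, I would match each of the sub-cases (a1), (a2), (b1)--(b3), (c1)--(c2) against the corresponding configuration in Theorem~\ref{equns} (and, where appropriate, against analogous statements for Cases 2--6 of Theorem~\ref{unstable2}). Sub-case (a1) is precisely Case~4 of Theorem~\ref{equns}, so unstability is immediate via the $a\in(-1/3,-2/7)$ one-parameter subgroup. The remaining sub-cases are verified analogously by checking that the stated vanishing of the $g_{ij}$, combined with $f\in\langle x^4z^2,x^4yz,x^4y^2,x^5z,x^5y,x^6\rangle$, forces a Plücker vanishing pattern $M_{rs}=\{0\}$ of the appropriate case, and then invoking Proposition~\ref{hm}.

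For the ($\Rightarrow$) direction, I would first choose coordinates in which $L=\{x=0\}$ and take $f$ to be a defining polynomial of $4L+Q$, so that $f\in\langle x^4z^2,x^4yz,x^4y^2,x^5z,x^5y,x^6\rangle$ with at least one of $f_{40},f_{41},f_{42}$ non-zero (since the properness of $\mathcal{P}$ prevents $Q$ from containing $L$). Replacing $g$ by a suitable $g+\alpha f$ clears any $g_{ij}$ for which $f_{ij}\neq 0$. Unstability then provides, via Proposition~\ref{hm}, a destabilizing 1-parameter subgroup $\lambda$ in some coordinate frame. The crucial reduction is that among the destabilizing $\lambda$ one can find one leaving $L$ invariant, so that the diagonalizing frame of $\lambda$ can be taken to include $L=\{x=0\}$: this follows because the limit $\lim_{t\to 0}\lambda(t)\cdot(4L+Q)$ is again a curve of the form $4L'+Q'$ with $L'$ a coordinate line of the diagonalizing frame, and a $PGL(3)$-conjugation transports the frame so that $L$ becomes $\{x=0\}$ while $\lambda$ remains diagonal.

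Once diagonality is in place, the three main cases (a), (b), (c) correspond to whether the leading non-zero coefficient of $Q$ is $f_{40}$, $f_{41}$, or $f_{42}$ (equivalently, to whether $(0{:}0{:}1)$ lies off $Q$, is a smooth point of $Q$, or is a singular point of $Q$). For each case, I would enumerate the critical weights $a$ from Lemma~\ref{uint} that can witness unstability, read off from Theorem~\ref{unstable2} the associated Plücker vanishing list, and translate each $m_{ijkl}=0$ into the linear constraint $g_{kl}=0$ via the Plücker relation $m_{ijkl}=f_{ij}g_{kl}-f_{kl}g_{ij}$ together with the non-vanishing of the selected leading $f_{ij}$. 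The main obstacle I anticipate is the compatibility reduction on $\lambda$, namely verifying that $L$ can be taken to be $\lambda$-invariant; once this is settled, the remainder of the forward direction is a finite, mechanical, though lengthy, bookkeeping entirely analogous to the algorithm carried out at the end of the proof of Theorem~\ref{gennotstable}.
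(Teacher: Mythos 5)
The paper does not actually prove Theorem \ref{unsreg}: it is one of the results stated ``without proofs,'' with the understanding that it follows from running the computer-assisted algorithm of the proof of Theorem \ref{gennotstable} on all six cases of Theorem \ref{unstable2} (producing Theorem \ref{equns} and its unstated analogues) and then intersecting the resulting normal forms with the hypothesis that $\mathcal{P}$ is proper and contains $4L+Q$. Your backward direction is consistent with that framework and is fine in outline, although you refer to sub-cases (b1)--(b3) and (c1)--(c2) that do not exist in this theorem (you appear to be conflating it with Theorem \ref{nsreg}), and your parenthetical that properness prevents $Q$ from containing $L$ is a non sequitur --- properness constrains pairs of distinct members, not the decomposition of a single non-reduced member.

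The genuine gap is in your forward direction, precisely at the step you yourself flag as the main obstacle. You want to fix $L=\{x=0\}$ first and then produce a destabilizing one-parameter subgroup diagonal in a frame containing $L$, justified by the claim that a $PGL(3)$-conjugation ``transports the frame so that $L$ becomes $\{x=0\}$ while $\lambda$ remains diagonal.'' This does not work as stated: when $\lambda$ has three distinct weights its diagonalizing frame is rigid, so no conjugation can move a general line $L$ into the coordinate triangle while keeping the same $\lambda$; and if you instead replace $\lambda$ by $g^{-1}\lambda g$, you must show the conjugate still destabilizes $\mathcal{P}$, which holds only for $g$ in the parabolic $P(\lambda)$ --- and you give no argument that such a $g$ moving $L$ to $\{x=0\}$ exists. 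The observation that $\lim_{t\to 0}\lambda(t)\cdot(4L+Q)$ is $4L'+Q'$ with $L'$ a coordinate line concerns the limit curve, not $4L+Q$ itself, and does not bridge this. The order of operations must be reversed, as in the paper's method: the destabilizing coordinates come first, the vanishing conditions of Theorem \ref{unstable2} force one generator to be divisible by a high power of the coordinate $x$ (Theorem \ref{equns} and its analogues), and only then does one use properness to identify the member $4L+Q$ with that generator and hence $L$ with $\{x=0\}$. Without carrying out that case analysis (or supplying a correct compatibility argument for $\lambda$ and $L$), the forward implication is not established.
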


\begin{thm}
Let $\mathcal{P}\in \mathscr{P}_6$ be a proper pencil which contains a curve of the form $3L+C$, where $L$ is a line and $C$ is a cubic (possibly reducible). Then $\mathcal{P}$ is unstable if and only if there exist coordinates in $\mathbb{P}^2$ and generators $C_f$ and $C_g$ of $\mathcal{P}$ such that:

\begin{enumerate}[(a)]
\item $f\in \langle x^3z^3,x^3yz^2,x^3y^2z,x^3y^3,x^4z^2,x^4yz,x^4y^2,x^5z,x^5y,x^6 \rangle$ plus $f_{30}\neq 0$ and $g$ satisfies 
\[
g_{00}=\ldots=g_{05}=0,g_{10}=\ldots=g_{14}=0,g_{20}=g_{21}=g_{22}=0
\]
In particular, $(0:0:1)$ has multiplicity $\geq 3$ in  $C_g$.
\item $f\in \langle x^3yz^2,x^3y^2z,x^3y^3,x^4z^2,x^4yz,x^4y^2,x^5z,x^5y,x^6 \rangle$ plus $f_{31}\neq 0$ and $g$ satisfies 
\[
g_{00}=\ldots=g_{04}=0,g_{10}=\ldots=g_{13}=0,g_{20}=g_{21}=0
\]
In particular, $(0:0:1)$ has multiplicity $\geq 3$ in  $C_g$.
\item $f\in \langle x^3y^2z,x^3y^3,x^4z^2,x^4yz,x^4y^2,x^5z,x^5y,x^6 \rangle$ plus $f_{32}\neq 0$ and $g$ satisfies 
\[
g_{00}=\ldots=g_{04}=0,g_{10}=g_{11}=g_{12}=0
\]
\item $f\in \langle x^3y^2z,x^3y^3,x^4yz,x^4y^2,x^5z,x^5y,x^6 \rangle$ plus $f_{32}\neq 0$ and $g$ satisfies
\[
g_{00}=\ldots=g_{03}=0,g_{10}=g_{11}=g_{12}=g_{20}=0
\]
In particular, $(0:0:1)$ has multiplicity $\geq 3$ in $C_g$.
\item $f\in \langle x^3y^3,x^4z^2,x^4yz,x^4y^2,x^5z,x^5y,x^6 \rangle$ plus $f_{33}\neq 0$ and $g$ satisfies
\[
g_{00}=\ldots=g_{04}=0,g_{10}=g_{11}=0
\]
\item $f\in \langle x^3y^3,x^4yz,x^4y^2,x^5z,x^5y,x^6 \rangle$ plus $f_{33}\neq 0$ and $g$ satisfies
\[
g_{00}=g_{01}=g_{02}=g_{10}=g_{11}=0
\]
\end{enumerate}
\label{unsreg1}
\end{thm}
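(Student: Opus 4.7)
The plan is to apply the same algorithmic procedure used in the proof of Theorem \ref{gennotstable}, now starting from the six vanishing regimes of Theorem \ref{unstable2} and specializing to the geometric hypothesis that $\mathcal{P}$ contains a curve of the form $3L + C$. First, I would normalize coordinates so that $L = \{x = 0\}$ and take $f$ to be the defining polynomial of this curve. Then $x^3 \mid f$, so the monomial support of $f$ lies in the ten monomials $\{x^3z^3, x^3yz^2, x^3y^2z, x^3y^3, x^4z^2, x^4yz, x^4y^2, x^5z, x^5y, x^6\}$. Unless $L \subset C$ (in which case we are in the $4L+Q$ setting of Theorem \ref{unsreg}), at least one of $f_{30}, f_{31}, f_{32}, f_{33}$ is nonzero. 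Letting $j_0$ denote the smallest such index produces the coarse trichotomy $j_0 \in \{0,1,2,3\}$ that underlies the split into cases (a)--(f).

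Second, for each of the six regimes of Theorem \ref{unstable2}, I would run the recursive procedure used in the proof of Theorem \ref{gennotstable}: process the vanishing conditions $m_{ijkl} = 0$ in the prescribed order, and at each step either (i) deduce that a coefficient of $g$ must vanish, (ii) replace $g$ by $g + \alpha f$ to kill a leading coefficient of $g$, or (iii) record that a coefficient of $f$ must vanish. The key pruning is that the condition $f_{3 j_0} \neq 0$ rules out every branch that would require $f_{3 j_0} = 0$; this cuts the combinatorial tree dramatically, and the surviving leaves collect exactly into the sub-cases (a1)--(a2), (b1)--(b3), (c1)--(c2), (d1)--(d3), (e), (f). Case (d) (and similarly (f)) arises from the branch where one additionally forces $f_{40} = f_{41} = 0$, which is why the ambient monomial space for $f$ is smaller than in (c) (resp.\ (e)).

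Third, the properness hypothesis plays its usual role: whenever the algorithm would force $x \mid g$, the line $L$ would lie in $\mathrm{Bs}(\mathcal{P})$, contradicting properness. This eliminates a large block of purely combinatorial possibilities and leaves only those for which the forced vanishing of $g_{ij}$ is concentrated at the point $(0:0:1)$, which in the listed cases corresponds to $(0:0:1)$ being a singular point of $C_g$ of multiplicity at least three. The converse direction is a direct check: for each listed case one substitutes the stated monomial supports into the $2\times 2$ minors $m_{ijkl}$ and verifies that they vanish on the appropriate index set of one of the six regimes of Theorem \ref{unstable2}, thereby producing, via Proposition \ref{hm}, a one-parameter subgroup destabilizing $\mathcal{P}$.

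The principal obstacle is bookkeeping: with six regimes, four values of $j_0$, and three-way branching at each step of the algorithm (where the branching rule itself depends on which earlier coefficients have already been constrained), the tree of cases is large and, as noted after Theorem \ref{gennotstable}, requires computer assistance to handle exhaustively. The delicate conceptual point is to merge equivalent branches: different orderings in which coefficients are forced to vanish can, after an admissible change of generators $g \mapsto g + \alpha f$, yield the same final configuration, and verifying that the deduplicated list is precisely (a)--(f) without redundancy is the only step that is not mechanical.
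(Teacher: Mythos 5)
Your proposal follows exactly the route the paper prescribes: Theorem \ref{unsreg1} is one of the results the paper states without proof, to be derived by running the algorithm from the proof of Theorem \ref{gennotstable} on the six vanishing regimes of Theorem \ref{unstable2} (with computer assistance), using properness to rule out $L\subset \mathrm{Bs}(\mathcal{P})$, so your plan is essentially the paper's own. One small slip: the sub-case labels (a1)--(a2), (b1)--(b3), (c1)--(c2), (d1)--(d3) that you say the surviving leaves collect into belong to the non-stability analogue (Theorem \ref{nsreg}); the unstability statement here has only the six undivided cases (a)--(f), each with a single vanishing condition on $g$.
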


\begin{thm}
Let $\mathcal{P}\in \mathscr{P}_6$ be a proper pencil which contains a curve of the form $2L+Q$, where $L$ is a line and $Q$ is a quartic (possibly reducible). If $\mathcal{P}$ is unstable then there exist coordinates in $\mathbb{P}^2$ and generators $C_f$ and $C_g$ of $\mathcal{P}$ such that:
\[
f\in \langle x^2z^4,x^2yz^3,x^2y^2z^2,x^2y^3z,x^2y^4, x^iy^jz^{6-i-j}  \rangle
\]
with $3\leq i \leq 6,0\leq j \leq 6, i+j \leq 6$ plus $f_{2j}\neq 0$ for some $j=0,\ldots,4$ and $(0:0:1)$ has multiplicity $\geq 3$ in $C_g$.
\label{unsreg2}
\end{thm}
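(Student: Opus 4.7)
The strategy is to apply, to the unstability Pl\"ucker vanishing conditions of Theorem \ref{unstable2}, the same cascading algorithm used in the proof of Theorem \ref{gennotstable}, and to invoke the hypothesis $2L+Q \in \mathcal{P}$ together with properness to cut down to a manageable case list.

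For the form of $f$ I would exploit the hypothesis directly: fix coordinates $[x:y:z]$ in $\mathbb{P}^2$ with $L = \{x=0\}$ and take $f$ to be the defining polynomial of the distinguished $2L+Q$ curve. This automatically gives $f = x^2 \tilde Q$ for some quartic $\tilde Q$, so $f$ lies in the prescribed span $\langle x^2z^4, x^2yz^3, x^2y^2z^2, x^2y^3z, x^2y^4, x^iy^jz^{6-i-j}: 3 \leq i \leq 6\rangle$, and the fact that $L$ appears in $C_f$ with multiplicity exactly $2$ (i.e.\ $L$ is not a component of $\tilde Q$) forces $f_{2j} \neq 0$ for some $j \in \{0,\ldots,4\}$. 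The remaining coordinate freedom consists of the stabilizer of $L$ in $PGL(3)$ (rescalings of $x$ together with arbitrary linear changes of $[y:z]$), and the remaining generator freedom is the replacement $g \mapsto g - \alpha f$.

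For the multiplicity condition on $C_g$ at $(0:0:1)$ I would use the unstability of $\mathcal{P}$. Theorem \ref{unstable2} provides six alternative vanishing patterns on the Pl\"ucker coordinates $(m_{ijkl})$. For each pattern I would run the algorithm of Theorem \ref{gennotstable}: processing the relations $m_{ijkl} = f_{ij}g_{kl} - f_{kl}g_{ij} = 0$ in a fixed order, branching on whether the coefficients $f_{ij}$ and $g_{ij}$ vanish, and cascading the consequences into vanishing conditions on the coefficients of the two generators. Several of the patterns force $g_{0j} = 0$ for all $j$, and combined with $x^2 \mid f$ this would put $\{x=0\}$ into the base locus, contradicting properness of $\mathcal{P}$, so those patterns do not occur. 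In the surviving patterns, the coefficient vanishings on $g$, after a possible linear change of $[y:z]$ and a normalization of $g$ modulo $f$, yield $g_{00} = g_{01} = g_{02} = g_{10} = g_{11} = g_{20} = 0$, which is exactly the condition that $(0:0:1)$ is a point of multiplicity at least $3$ on $C_g$.

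The main obstacle is the case-by-case bookkeeping: one has to run the cascading algorithm in each of the six unstability patterns, track which subcases are excluded by properness given that $f = x^2\tilde Q$, and verify that the remaining coordinate and generator freedom suffices to place the relevant point of $L$ at $(0:0:1)$. As the author already noted for the analogous Theorem \ref{unsreg1}, the full details are computer-assisted and lengthy; conceptually, however, what is going on is that properness combined with the $2L+Q$ hypothesis forces the line $L$ to pass through two of the three fixed points of the destabilizing one-parameter subgroup, and this in turn pins down the triple point of $C_g$ on $L$.
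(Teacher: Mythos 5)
Your proposal follows essentially the same route as the paper: the paper states Theorem \ref{unsreg2} without a written proof, explicitly deferring to the cascading algorithm from the proof of Theorem \ref{gennotstable} applied to the six unstability patterns of Theorem \ref{unstable2}, with the lengthy case analysis done by computer. Your sketch is that same argument, and your added observations --- that $x^2\mid f$ together with properness rules out the patterns forcing $x\mid g$, and that the surviving coefficient vanishings $g_{00}=g_{01}=g_{02}=g_{10}=g_{11}=g_{20}=0$ encode the triple point of $C_g$ at $(0:0:1)$ --- are consistent with how the analogous Theorems \ref{nsreg}--\ref{unsreg1} are organized.
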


\section{Stability of Halphen pencils of index two}
\label{halphen}

We now combine the results from Sections \ref{sc} and \ref{gd},  together with the results we have previously obtained in \cite{azconstr} and \cite{azstabd}, in order to completely characterize the stability of Halphen pencils of index two. Similar to the work of Miranda we provide a geometric description in terms of the types of singular fibers of the corresponding rational elliptic surface. 

The relevant definitions are as follows:

\begin{defi}
A \textbf{rational elliptic surface}  consists of a smooth and projective rational surface $Y$ together with a fibration  (a surjective proper flat morphism) $f:Y\to \mathbb{P}^1$ such that the generic fiber is a smooth genus one curve and there are no $(-1)-$curves in any fiber.  
\end{defi}

\begin{defi}
Given $f:Y\to \mathbb{P}^1$ as above, we define the \textbf{index} of the fibration as the positive generator of the ideal $\{D\cdot Y_{\eta} \,\,; \,\, D\in \text{Pic}(Y)\} \trianglelefteq \mathbb{Z}$, where $Y_{\eta}$ is a generic fiber. 
\end{defi}

Any rational elliptic surface has finitely many singular fibers, and these have been classified by Kodaira and N\'{e}ron \cites{kodaira1, kodaira2, neron}. Table \ref{fibers}  below gives the full classification. Over a field of characteristic zero, any multiple fiber is of type $I_n$ for some $n\geq 0$ \cite{dc}*{Proposition 5.1.8} .

{\renewcommand{\arraystretch}{1.5}

\begin{table}[H]
\centering
\begin{tabular}{|c  | c | c |}
\hline 
\bf{Kodaira Type} & \bf{Number of Components} & \bf{Dual Graph}\\
\hline 
$I_0$ & 1 (smooth) & $\bullet$ \\
$I_1$ & 1 (with a node) & $\bullet$ \\
$I_n$ & $n\geq 2$ & $\tilde{A}_{n-1}$\\
$II$ & 1 (with a cusp) & $\bullet$ \\
$III$ & 2 & $\tilde{A}_1$ \\
$IV$ & 3 & $\tilde{A}_2$ \\
$I_n^*$ & $n+5$ & $\tilde{D}_{4+n}$\\
$IV^*$ & $7$ & $\tilde{E}_6$\\
$III^*$ & $8$ & $\tilde{E}_7$\\
$II^*$ & $9$ & $\tilde{E}_8$\\
\hline
\end{tabular}
\caption{Kodaira's Classification}
\label{fibers}
\end{table}
}

\begin{defi}
A \textbf{Halphen pencil of index m}  is a pencil of plane curves of degree $3m$  with nine (possibly infinitely near) base points of multiplicity $m$. 
\label{defHalphen} 
\end{defi}

The correspondence between Halphen pencils and rational elliptic surfaces is given by the following Proposition:

\begin{prop}[\cite{dc}*{Theorem 5.6.1}, \cite{fuji90}*{[Main Theorem 2.1]}]
Let $f:Y \to \mathbb{P}^1$ be a rational elliptic surface of index $m$ and let $F$ be a choice of  a fiber of $f$, then there exists a birational map $\pi: Y \to \mathbb{P}^2$ so that $f\circ \pi^{-1}$ is a Halphen pencil of index $m$ and, moreover, $B\doteq \pi(F)$  is a plane curve of degree $3m$.

Conversely, given a Halphen pencil of index $m$, taking the minimal resolution of its base points we obtain a rational elliptic surface of index $m$.
\label{Halphen}
\end{prop}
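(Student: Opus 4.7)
The plan is to exploit Castelnuovo's contraction theorem together with the canonical bundle formula for elliptic fibrations, which together translate the existence of the elliptic pencil on $Y$ into a pencil of plane curves of degree $3m$ with the required multiplicity-$m$ base scheme, and vice versa.

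For the forward direction, I would begin by noting that a rational elliptic surface satisfies $K_Y^2 = 0$ and that $-K_Y$ is effective: if $mF_0$ denotes the (unique) multiple fiber, then by the canonical bundle formula $-K_Y \sim F_0$, so a general fiber satisfies $F \sim mF_0 \sim -mK_Y$. Since $Y$ is a smooth rational surface with $K_Y^2 = 0$, by repeated application of Castelnuovo's contraction theorem it admits a birational morphism $\pi: Y \to \mathbb{P}^2$ that factors as the blow-up of nine (possibly infinitely near) points $p_1, \ldots, p_9 \in \mathbb{P}^2$. Letting $H = \pi^* \mathcal{O}_{\mathbb{P}^2}(1)$ and $E_1, \ldots, E_9$ be the total transforms of the exceptional divisors, we have $K_Y = -3H + \sum_i E_i$, and therefore
\[
F \sim -mK_Y \sim 3mH - m\sum_i E_i.
\]
Pushing forward, $B := \pi_*(F)$ is a plane curve of degree $3m$ with multiplicity exactly $m$ at each $p_i$. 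The rational map $f \circ \pi^{-1}: \mathbb{P}^2 \dashrightarrow \mathbb{P}^1$ is then defined by the pencil spanned by any two such curves, which by definition is a Halphen pencil of index $m$.

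For the converse, given a Halphen pencil $\mathcal{P}$ of index $m$ with base points $p_1, \ldots, p_9$ of multiplicity $m$, I would let $\pi: Y \to \mathbb{P}^2$ be the minimal resolution of the base locus, so that the pencil lifts to a morphism $f: Y \to \mathbb{P}^1$. A general fiber $F$ has class $3mH - m \sum_i E_i = -mK_Y$, so by adjunction
\[
2 p_a(F) - 2 \;=\; F \cdot (F + K_Y) \;=\; m(m-1)\, K_Y^2 \;=\; 0,
\]
since $K_Y^2 = 9 - 9 = 0$. By Bertini applied away from the base locus, a generic member is smooth, so $f$ is an elliptic fibration. The curve $mC$ (where $C$ is the unique cubic through the nine base points) produces a fiber whose reduced structure is the strict transform $\widetilde{C} \sim -K_Y$ appearing with multiplicity $m$, so this is the multiple fiber and the fibration has index $m$. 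Finally, the minimality of the resolution ensures that no fiber contains a $(-1)$-curve.

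The main obstacle will be the careful bookkeeping around infinitely near configurations: one needs to verify that the sequence of blow-ups from $Y$ realizes the nine base points with the correct infinitely-near structure, and that $F$ acquires multiplicity exactly $m$ (not a proper multiple) as a fiber. The index claim requires ruling out the possibility that $-K_Y$ is itself a proper multiple in $\mathrm{Pic}(Y)$; this is precisely where the hypothesis ``index $m$'' of the Halphen pencil (as opposed to index strictly dividing $m$) is used. These delicate points are treated in detail in \cite{dc} and \cite{fuji90}, and in a self-contained write-up one would essentially need to recast their arguments in the present notation.
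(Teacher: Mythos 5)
The paper offers no proof of this proposition at all: it is quoted directly from the literature (Cossec--Dolgachev, Theorem 5.6.1, and Fujita), so there is nothing internal to compare against. Your sketch is a faithful outline of the standard argument in those sources, and the converse direction in particular is essentially complete: the class computation $F\sim 3mH-m\sum E_i=-mK_Y$, adjunction giving $p_a(F)=1$, the identification of $m\widetilde{C}$ as the multiple fiber, and the primitivity of $-K_Y=3H-\sum E_i$ in $\operatorname{Pic}(Y)$ (forcing the ideal $\{D\cdot Y_\eta\}$ to be exactly $m\mathbb{Z}$) are the right steps.

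Two points in the forward direction deserve more than the gloss you give them. First, the inference ``$Y$ rational with $K_Y^2=0$, hence Castelnuovo contracts $Y$ down to $\mathbb{P}^2$'' is not valid as stated: a rational surface with $K^2=0$ may have minimal model $\mathbb{F}_n$, and for $n\geq 2$ such a surface need not dominate $\mathbb{P}^2$. What saves you is that $-K_Y$ is a positive rational multiple of the nef fiber class, hence nef; this forces every irreducible curve to have self-intersection $\geq -2$, which rules out minimal models $\mathbb{F}_n$ with $n\geq 3$, and the cases $\mathbb{F}_0,\mathbb{F}_2$ are re-routed through $\mathbb{F}_1\to\mathbb{P}^2$. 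You should say this, since it is the only place where the elliptic structure (rather than mere rationality) enters the existence of $\pi$. Second, to conclude that \emph{every} member of the image pencil has multiplicity exactly $m$ (not more) at each $p_i$, you need the exceptional curves of $\pi$ to be $m$-sections rather than fiber components; this is precisely where the defining condition that no fiber contains a $(-1)$-curve is used, and it is absent from your write-up. With those two insertions the sketch matches the cited proofs; as it stands, deferring them to \cite{dc} and \cite{fuji90} is acceptable but should be flagged explicitly as the places where the hypotheses ``rational elliptic'' and ``relatively minimal'' are consumed.
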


In particular, any Halphen pencil of index $m$ contains exactly one multiple cubic $mC$, which corresponds to the unique multiple fiber in the associated rational elliptic surface \cite{dc}*{Proposition 5.61,(iii)}. Thus any Halphen pencil $\mathcal{P}$ of index $m$ can be written in the following form $\lambda(B)+\mu(mC)=0$, where the curve $B$ corresponds to some (non-multiple) fiber of $Y$ that we denote by $F$.

With these notations in mind we now focus in the case $m=2$. In Section \ref{suf} we give some sufficient conditions for the (semi)stability of $\mathcal{P}$ and in Section \ref{nec} we describe some necessary conditions. A complete characterization is then obtained by further studying the stability of $\mathcal{P}$ when the fiber $F$ is of type $II^*,III^*$ or $IV^*$ (Sections \ref{seciistar}, \ref{seciiistar} and \ref{secivstar}).

\subsection{Sufficient conditions for (semi)stability}
\label{suf}

We first establish sufficient conditions for the (semi)stability of a Halphen pencil of index two. We will see that the log canonical thresholds (see e.g. \cite{singpairs}*{Section 8}) of the pairs $(\mathbb{P}^2,2C)$ and $(\mathbb{P}^2,B)$ plays a fundamental role.

\begin{thm}
If $\mathcal{P}$ is not stable, then $Y$  contains a non-reduced fiber.
\label{Hnotstable}
\end{thm}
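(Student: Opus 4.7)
The approach is the contrapositive: assume $\mathcal{P}$ is not stable, and exhibit a fiber of $Y$ of non-reduced Kodaira type ($I_n^*, IV^*, III^*$, or $II^*$).

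First, by the Hilbert--Mumford criterion (Theorem \ref{notstable}, restated as Theorem \ref{notstable2}) there exist coordinates $[x:y:z]$ on $\mathbb{P}^2$ and a destabilizing weight $a \in \{-1/2, -2/7, -1/5, 0, 1/4, 2/5, 1\}$ for which the corresponding Plücker-coordinate vanishing conditions hold. I would translate these into conditions on a pair of generators $f,g$ of $\mathcal{P}$ via the algorithm illustrated in the proof of Theorem \ref{gennotstable}, and invoke the essential packaged outputs Theorems \ref{nsreg} and \ref{nsreg1}. The key use of the Halphen hypothesis is properness: the base locus of $\mathcal{P}$ consists of nine isolated points, so no line can lie in it. This rules out every sub-case in which both $f$ and $g$ would become divisible by $x$. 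In each of the seven cases what survives is a member $C_f \in \mathcal{P}$ whose defining polynomial is divisible by $x^3$, i.e., $C_f = k L + D$ with $L = \{x=0\}$ and $k \geq 3$. For the weight $a=-1/2$ this is precisely Case 1 of Theorem \ref{gennotstable} (its Cases 2 and 3 force $\{x=0\}$ into the base locus), and an analogous conclusion, after an appropriate relabeling of coordinates, holds for the other six weights.

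Next, I would check that $C_f$ is not a scalar multiple of the polynomial $c^2$ defining the multiple cubic $2C$. Otherwise the multiplicity of $L$ in $c^2$ being $\geq 3$ would force the multiplicity of $L$ in $C$ to be $\geq 2$, making the underlying reduced Kodaira type of the multiple fiber of $Y$ itself non-reduced --- contradicting \cite{dc}*{Proposition 5.1.8}, according to which the multiple fiber of an index-two rational elliptic surface in characteristic zero is of type $I_n$. Hence $C_f$ corresponds to a \emph{non-multiple} fiber $F$ of the elliptic fibration $Y \to \mathbb{P}^1$.

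Finally, since $L$ is not contracted by $\pi: Y \to \mathbb{P}^2$, its strict transform $\widetilde L$ is a component of $F$ with multiplicity at least $k \geq 3$. Scanning Table \ref{fibers}, the only Kodaira types admitting a component of multiplicity $\geq 2$ (let alone $\geq 3$) are the non-reduced types $I_n^*, IV^*, III^*, II^*$. Thus $F$ is non-reduced, as required. The principal obstacle is the first step: one needs, for each of the seven destabilizing weights, the explicit dictionary between vanishing Plücker coordinates and vanishing coefficients of $(f,g)$, and then must prune the non-proper sub-cases. This is exactly the lengthy bookkeeping that the paper packages into Theorems \ref{nsreg} and \ref{nsreg1}, so in a written proof this step would mainly amount to a careful citation rather than a fresh computation.
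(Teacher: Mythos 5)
Your proposal takes a genuinely different route from the paper, whose proof is essentially two lines: since $lct(\mathbb{P}^2,2C)=1/2$, Theorem 1.1 of \cite{azstabd} (with $\alpha=1/2$) shows that a non-stable pencil must contain a member $B$ with $lct(\mathbb{P}^2,B)\leq 1/2$, and Proposition 4.15 of \cite{azconstr} identifies such curves with non-reduced fibers. The combinatorial route you propose, however, has a genuine gap at its central step: the claim that for each of the seven destabilizing weights, properness forces a member of $\mathcal{P}$ divisible by $x^3$. This is false. A one-parameter subgroup can destabilize the pencil by acting on \emph{two} members simultaneously, with neither member acquiring a multiple line. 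Concretely, Theorem \ref{2sssbad} together with the list in Section \ref{seciiistar} allows strictly semistable (hence non-stable) Halphen pencils of index two in which $C$ is a nodal cubic and $B$ is a double conic plus a conic; there no member is divisible by the cube (or even the square) of a linear form. Likewise Theorem \ref{twoi0star} covers non-stable pencils whose worst members realize $I_0^*$ fibers, e.g.\ a double line plus a quartic, and Theorem \ref{nsreg1}(a) exhibits non-stable proper pencils whose distinguished members are $2L+Q$ with $L$ of multiplicity exactly two and a cone of six lines (cf.\ Lemma \ref{m6}, where six \emph{distinct} concurrent lines already destabilize). Your extrapolation from the weight $a=-1/2$, where Cases 2 and 3 of Theorem \ref{gennotstable} do happen to force $\{x=0\}$ into the base locus, to the remaining six weights is therefore unwarranted: the weights near $a=0$ encode weighted-multiplicity conditions at a common base point that are \emph{shared} between the two generators rather than concentrated in one of them.

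Note also that the obvious repair --- weakening ``divisible by $x^3$'' to ``has a component of multiplicity $\geq 2$'' --- does not save the argument, because $2C$ itself always has all components of multiplicity two while its fiber is of type $I_n$, i.e.\ reduced in the sense used by the paper. So one would still have to exclude the possibility that the only non-reduced member is $2C$, which is exactly the configuration that pure Pl\"{u}cker bookkeeping on a single member cannot rule out. This is precisely the difficulty that the paper's log-canonical-threshold machinery from \cite{azstabd} is built to handle, by quantifying how much of the destabilization can be absorbed by $2C$ (namely $\alpha=lct(\mathbb{P}^2,2C)=1/2$) and how much must be borne by some other member $B$. The last two steps of your argument (a non-$2C$ member with a component of multiplicity $\geq 3$ yields a non-multiple fiber of starred Kodaira type) are correct, but the hypothesis they need is not established.
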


\begin{proof}
Since $lct(\mathbb{P}^2,2C)=\frac{1}{2}$ \cite{azconstr}*{Proposition 4.9}, we conclude from \cite{azstabd}*{Theorem 1.1}, with $\alpha=\frac{1}{2}$,  that if the pencil $\mathcal{P}$ is not stable, then $\mathcal{P}$ contains a curve $B$ such that $lct(\mathbb{P}^2,B)\leq \frac{1}{2}$. By \cite{azconstr}*{Proposition 4.15} this implies the corresponding rational elliptic surface $Y \to \mathbb{P}^1$ contains a non-reduced fiber $F$.
\end{proof}

\begin{rmk}
A completely analogous argument in fact shows the statement of Theorem \ref{Hnotstable} is true for Halphen pencils of any index. 
\end{rmk}

\begin{thm}
If $\mathcal{P}$ is unstable, then $Y$  contains a fiber of type $II^*,III^*$ or $IV^*$.
\label{Hunstable}
\end{thm}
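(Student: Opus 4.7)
The plan is to mirror the argument used for Theorem \ref{Hnotstable}, substituting its unstable analogue at each step. Since $lct(\mathbb{P}^2,2C)=\tfrac{1}{2}$ by \cite{azconstr}*{Proposition 4.9}, the curve $2C$ already realises the threshold value $\alpha=\tfrac{1}{2}$ inside $\mathcal{P}$, so the criterion of \cite{azstabd} is primed to be applied at exactly this $\alpha$.

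Next, I would invoke the unstable version of \cite{azstabd}*{Theorem 1.1} with $\alpha=\tfrac{1}{2}$. Because $\mathcal{P}$ is assumed to be \emph{unstable} (rather than merely non-stable), this produces a curve $B\in\mathcal{P}$ satisfying $lct(\mathbb{P}^2,B)<\tfrac{1}{2}$ with strict inequality. Having a strict inequality here is the essential refinement over Theorem \ref{Hnotstable}, where only the weak bound $lct(\mathbb{P}^2,B)\leq\tfrac{1}{2}$ was obtained.

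To finish, I would consult \cite{azconstr}*{Proposition 4.15}, which computes $lct(\mathbb{P}^2,B)$ fibre by fibre in terms of the Kodaira type of $F$. Every fibre of type $I_n^*$ gives $lct(\mathbb{P}^2,B)=\tfrac{1}{2}$ exactly, while the only non-reduced types for which $lct(\mathbb{P}^2,B)<\tfrac{1}{2}$ are $IV^*$, $III^*$ and $II^*$. Combined with the strict bound from the previous step, this forces $F$ to be of one of these three types.

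The main obstacle is ensuring that the unstable analogue of \cite{azstabd}*{Theorem 1.1} produces the strict inequality cleanly, since otherwise one could not separate $I_n^*$ from the three $E$-type fibres we want to isolate. Should any subtlety arise in reading that strict bound off, a back-up route is to appeal directly to the explicit normal forms for unstable proper pencils recorded in Theorems \ref{equns}, \ref{unsreg}, \ref{unsreg1} and \ref{unsreg2}, and to extract the $\tilde{E}_6$, $\tilde{E}_7$ or $\tilde{E}_8$ dual graph of $F$ after a case analysis on whether $C$ is smooth, nodal, cuspidal or reducible.
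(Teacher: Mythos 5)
Your proposal is correct and follows essentially the same route as the paper: apply \cite{azstabd}*{Theorem 1.1} at $\alpha=\tfrac{1}{2}$ (using $lct(\mathbb{P}^2,2C)=\tfrac{1}{2}$) to extract a curve $B$ with the strict bound $lct(\mathbb{P}^2,B)<\tfrac{1}{2}$, then use the fibre-type classification of log canonical thresholds to rule out the $I_n^*$ types and conclude $F$ is of type $II^*$, $III^*$ or $IV^*$. The only cosmetic difference is that the paper cites \cite{azconstr}*{Propositions 4.15 and 4.16} jointly for that last step, with 4.16 handling the $I_n^*$ bound $lct\geq\tfrac{1}{2}$.
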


\begin{proof}
The proof is very similar to the proof of Theorem \ref{Hnotstable}. Since we know $lct(\mathbb{P}^2,2C)=\frac{1}{2}$ \cite{azconstr}*{Proposition 4.9}, we conclude from \cite{azstabd}*{Theorem 1.1}, by taking $\alpha=\frac{1}{2}$,  that if the pencil $\mathcal{P}$ is unstable, then $\mathcal{P}$ contains a curve $B$ such that $lct(\mathbb{P}^2,B)< \frac{1}{2}$. Thus, \cite{azconstr}*{Propositions 4.15 and 4.16} imply $Y$ contains a a fiber of type $II^*,III^*$ or $IV^*$.
\end{proof}

\subsection{Necessary conditions for (semi)stability}
\label{nec}

The next step is to obtain necessary conditions. Again, the log canonical thresholds of the pairs $(\mathbb{P}^2,2C)$ and $(\mathbb{P}^2,B)$ will be fundamental to our analysis. 

When $C$ is smooth and $B$ is semistable we prove:

\begin{prop}
If $C$ is smooth and all curves in $\mathcal{P}$ are stable except (possibly) for one curve that is semistable, then $\mathcal{P}$ is stable.
\end{prop}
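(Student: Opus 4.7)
The plan is to argue by contradiction. I will assume $\mathcal{P}$ is not stable and invoke the log-canonical-threshold machinery of \cite{azconstr} and \cite{azstabd}, exactly as in the proof of Theorem \ref{Hnotstable}, to extract a second non-stable curve in $\mathcal{P}$ beyond $2C$, contradicting the hypothesis.

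The first step will be to observe that, since $C$ is smooth, \cite{azconstr}*{Proposition 4.9} gives $lct(\mathbb{P}^2,2C)=\frac{1}{2}$. Because $\frac{1}{2}$ is the critical value separating stability from non-stability for plane sextics, this already exhibits $2C$ as strictly semistable; hence $2C$ is automatically the ``(possibly) one curve that is semistable'' permitted by the hypothesis, and the assumption reduces to the cleaner statement that every curve of $\mathcal{P}$ different from $2C$ is stable. I will then apply \cite{azstabd}*{Theorem 1.1} with $\alpha=\frac{1}{2}$: combined with $lct(\mathbb{P}^2,2C)=\frac{1}{2}$, the hypothetical non-stability of $\mathcal{P}$ produces a curve $B\in\mathcal{P}$, \emph{distinct} from $2C$, with $lct(\mathbb{P}^2,B)\leq\frac{1}{2}$. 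Since a plane sextic with log canonical threshold at most $\frac{1}{2}$ cannot be GIT-stable, $B$ is a second non-stable curve in $\mathcal{P}$, giving the desired contradiction.

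The main obstacle I anticipate is ensuring that the curve $B$ supplied by \cite{azstabd}*{Theorem 1.1} is genuinely different from $2C$; without this the argument collapses, because $2C$ itself trivially satisfies $lct\leq\frac{1}{2}$. This distinctness is exactly what gives Theorem \ref{Hnotstable} its geometric content, where (via \cite{azconstr}*{Proposition 4.15}) it is used to produce a non-reduced fiber $F$ distinct from the multiple fiber $2C$, so I expect it to be immediate from the precise formulation of \cite{azstabd}*{Theorem 1.1} (which should provide a second low-$lct$ curve, different from the one fixed a priori to have $lct\geq\alpha$). Once this is granted, the rest of the argument is formal.
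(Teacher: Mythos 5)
There is a genuine gap, and it sits at the heart of your contradiction argument. You claim that $lct(\mathbb{P}^2,2C)=\frac{1}{2}$ ``already exhibits $2C$ as strictly semistable,'' and later that ``a plane sextic with log canonical threshold at most $\frac{1}{2}$ cannot be GIT-stable.'' Both assertions invert the Hacking/Kim--Lee implication, which only runs in one direction: $lct>3/d$ forces stability and $lct\geq 3/d$ forces semistability, but a curve with $lct$ equal to the critical value $\frac{1}{2}$ may perfectly well be stable. Indeed $2C$ itself, for $C$ smooth, is a stable sextic by Shah's classification --- this is exactly the fact the paper relies on --- even though $lct(\mathbb{P}^2,2C)=\frac{1}{2}$. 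Consequently the curve $B$ with $lct(\mathbb{P}^2,B)\leq\frac{1}{2}$ that you extract from \cite{azstabd}*{Theorem 1.1} need not be non-stable: combined with the hypothesis it only forces $lct(\mathbb{P}^2,B)=\frac{1}{2}$, which is no contradiction at all. Since every Halphen pencil of index two contains a member ($2C$) with $lct$ exactly $\frac{1}{2}$, your strategy can never produce the desired contradiction; the issue is not the distinctness of $B$ from $2C$ (which you correctly flag as the point needing care) but the final inference from a small log canonical threshold to non-stability.

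The paper's own proof is much more direct and uses a different input: it invokes \cite{azstabd}*{Theorem 1.5}, a purely GIT statement to the effect that a pencil all of whose members are stable except for at most one semistable member is itself stable, together with the fact that $2C$ is stable when $C$ is smooth. No log canonical thresholds and no contradiction argument are needed. If you want to keep an lct-flavored argument, you would have to replace your last step by the actual geometric analysis of which members $B$ with $lct(\mathbb{P}^2,B)\leq\frac{1}{2}$ can occur and whether they destabilize the pencil --- which is essentially the content of the rest of Section 4 of the paper, not a one-line deduction.
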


\begin{proof}
It follows from \cite{azstabd}*{Theorem 1.5} and the fact that $2C$ is stable \cite{shah}.
\end{proof}

\begin{cor}
If $C$ is smooth, $F$ is of type $II^*,III^*$ or $IV^*$ and $B\doteq \pi(F)$ is semistable, then $\mathcal{P}$ is stable.
\label{badfiberstable}
\end{cor}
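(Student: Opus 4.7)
The plan is to apply the Proposition stated immediately above the corollary, whose hypotheses require $C$ smooth (which is given) and every member of $\mathcal{P}$ to be stable except possibly one that is semistable. Since $B$ is semistable by assumption, the task reduces to verifying that every other curve of $\mathcal{P}$ is stable.

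First, $2C$ is stable by \cite{shah}, exactly as invoked in the proof of the Proposition. It remains to handle the curves $B' \in \mathcal{P}$ with $B' \neq B$ and $B' \neq 2C$. Under the correspondence of Proposition \ref{Halphen}, each such $B'$ arises from a non-multiple fiber $F'$ of $Y$ distinct from $F$. Since $Y$ is a rational elliptic surface, $e(Y) = 12$; because $F$ is of type $II^*$, $III^*$, or $IV^*$ one has $e(F) \geq 8$. The multiple fiber is of type $I_n$ for some $n \geq 0$ (as recorded right before Definition \ref{defHalphen}) and so contributes a non-negative Euler characteristic, forcing the remaining non-multiple fibers to have total Euler characteristic at most $4$. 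Since the smallest non-reduced Kodaira fiber, $I_0^*$, already has Euler characteristic $6$, every non-multiple fiber other than $F$ must be reduced; in particular, $F'$ is reduced.

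For such a reduced $F'$, \cite{azconstr}*{Proposition 4.15} yields $lct(\mathbb{P}^2, B') > 1/2$, and combining this with the GIT classification of plane sextics in \cite{shah} shows that $B'$ is stable. Once every $B' \neq B$ is shown stable and $B$ is semistable, the Proposition applies directly to produce the stability of $\mathcal{P}$.

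I expect the main obstacle to be the last step, deducing stability of the individual reduced-fiber curves $B'$ from the lower bound $lct(\mathbb{P}^2, B') > 1/2$. This should be attainable via Shah's criterion, since a reduced sextic satisfying this lct bound carries at worst simple (ADE) singularities; however, care is required to verify that the constraint of passing through the nine base points with multiplicity two does not force upon $B'$ a singularity type outside Shah's stable list.
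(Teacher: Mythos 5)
Your proof is correct and follows the same overall strategy as the paper: reduce to the Proposition immediately preceding the corollary, and verify that every member of $\mathcal{P}$ other than $B$ is stable. The two places where you deviate are both legitimate substitutions rather than gaps. First, where you run the Euler characteristic count $e(Y)=12$, $e(F)\geq 8$, multiple fiber of type $I_n$ contributing $e\geq 0$, to rule out a second non-reduced fiber, the paper simply cites Persson's classification of singular fiber configurations; your argument is a self-contained proof of the same fact. Second, for the step you flag as the main obstacle, the paper cites \cite{azconstr}*{Propositions 4.14 and 4.15} to get that the remaining curves are reduced with $lct(\mathbb{P}^2,B')>1/2$, and then invokes \cite{hacking} and \cite{kimlee} for the implication that a plane sextic with log canonical threshold greater than $1/2$ is GIT stable. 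That citation dissolves your worry: the bound $lct>1/2$ by itself forces at worst ADE singularities on the reduced curve $B'$ (equivalently, stability in Shah's classification), and the constraint of passing through the nine base points with multiplicity two plays no role in the stability of the individual curve -- it has already been used, implicitly, in deducing the lct bound from the reducedness of the fiber $F'$. One small bookkeeping point: you should also note that $B'$ itself is reduced as a plane curve (this is why the paper cites Proposition 4.14 in addition to 4.15), though this follows immediately from the reducedness of $F'$ and the birationality of $\pi$.
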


\begin{proof}
From the classification in \cite{perssonlist} we know that any other fiber of $Y$ is reduced. By \cite{azconstr}*{Propositions 4.14 and 4.15} we also know that all other curves in $\mathcal{P}$ are reduced and have log canonical threshold greater than $1/2$. As observed in  \cite{hacking} and \cite{kimlee}, this implies  all the curves in $\mathcal{P}$ are stable except for one curve that is semistable.
\end{proof}

\begin{cor}
If $C$ is smooth and $Y$ contains exactly one fiber $F$ of type $I_n^*, n\leq 4$, then $\mathcal{P}$ is stable.
\end{cor}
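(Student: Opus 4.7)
The plan is to mirror the structure used in Corollary \ref{badfiberstable}. The ultimate engine will be \cite{azstabd}*{Theorem 1.5}, which asserts stability of a pencil once every one of its curves is stable with at most one exception permitted to be merely semistable. I would partition the curves of $\mathcal{P}$ into three groups: (a) the multiple cubic $2C$, which is stable by \cite{shah} because $C$ is smooth; (b) every other curve of $\mathcal{P}$ distinct from $B\doteq\pi(F)$; and (c) the distinguished curve $B$ itself. Once (b) and (c) are handled, the theorem follows immediately from \cite{azstabd}*{Theorem 1.5}.

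For group (b), the argument is essentially the same as in Corollary \ref{badfiberstable}. Since $Y$ has only one non-reduced fiber (namely $F$), every curve $B'\in\mathcal{P}\setminus\{B,2C\}$ corresponds to a reduced fiber. Applying \cite{azconstr}*{Propositions 4.14 and 4.15} one obtains $\operatorname{lct}(\mathbb{P}^2,B')>\tfrac12$, and the results of \cite{hacking} and \cite{kimlee} then upgrade this lower bound on the log canonical threshold to stability of $B'$ as a plane sextic.

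The genuinely new step is (c), where I need to show that when $F$ is of type $I_n^*$ with $n\leq 4$, the associated plane sextic $B$ is semistable. I would work through the classification of plane sextics $B$ whose strict transform realizes a $\tilde{D}_{4+n}$ configuration on $Y$ after blowing up the nine base points; these come in a small number of normal forms, each with a controlled singularity profile (typically a single double or triple point, possibly with infinitely near points). In each case one verifies semistability either by computing $\operatorname{lct}(\mathbb{P}^2,B)\geq\tfrac12$ and applying the criterion of \cite{azstabd}*{Theorem 1.1}, or by directly checking against Proposition \ref{hm} that no one-parameter subgroup exhibits $B$ as unstable; equivalently, by showing that $B$ falls into none of the configurations enumerated in Theorems \ref{unsreg}, \ref{unsreg1}, \ref{unsreg2}.

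The main obstacle is precisely this case analysis for group (c). The bound $n\leq 4$ is sharp: as $n$ increases, the singularities of $B$ become more degenerate, and at some point $B$ picks up a singularity (for instance an $\tilde{A}$- or $\tilde{D}$-type configuration concentrated along a single flex tangent) that forces it into one of the unstable regimes of Section \ref{gd}. Ensuring that this transition does not occur for $n\leq 4$, and identifying exactly which normal forms arise in each subcase $n=0,1,2,3,4$, constitutes the bulk of the work; the rest is a direct appeal to results already established in Sections \ref{sc}--\ref{gd} and in \cite{azconstr}, \cite{azstabd}.
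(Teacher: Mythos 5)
Your skeleton is exactly the paper's: decompose $\mathcal{P}$ into $2C$ (stable by \cite{shah} since $C$ is smooth), the curves over reduced fibers (stable via the lct argument of Corollary \ref{badfiberstable}), and $B=\pi(F)$ (semistable), then invoke \cite{azstabd}*{Theorem 1.5}. The one place you diverge is step (c), and there you propose far more work than is needed: the paper does not classify normal forms of $B$ realizing a $\tilde{D}_{4+n}$ fiber, it simply quotes that any curve corresponding to a non-reduced fiber \emph{not} of type $II^*$, $III^*$ or $IV^*$ satisfies $lct(\mathbb{P}^2,B)\geq 1/2$ (this is \cite{azconstr}*{Proposition 4.16}, already used verbatim for $I_0^*$ in Theorem \ref{twoi0star} and implicitly in Theorem \ref{Hunstable}), and then $B$ is semistable by \cites{hacking,kimlee} --- note that the lct-to-semistability step is Hacking/Kim--Lee, not \cite{azstabd}*{Theorem 1.1}, which is a statement about pencils rather than individual curves. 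Since you leave your case analysis unexecuted and describe it as ``the bulk of the work,'' your write-up has a gap precisely where the paper has a one-line citation. Relatedly, your heuristic that the bound $n\leq 4$ is ``sharp'' because larger $n$ forces $B$ into an unstable regime is not the right explanation: the lct bound (hence semistability of $B$) holds for $I_n^*$ fibers regardless of $n$; the restriction $n\leq 4$ merely reflects which configurations actually occur on a rational elliptic surface of index two with $C$ smooth (the appeal to \cite{perssonlist} in the proof), not a stability threshold.
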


\begin{proof}
Again, from the classification in \cite{perssonlist} we know that any other fiber of $Y$ is reduced. Since the curve $B$ is such that $lct(\mathbb{P}^2,B)\geq 1/2$, hence it is semistable \cites{hacking, kimlee}, we can argue as in the proof of Corollary \ref{badfiberstable} to conclude all the curves in $\mathcal{P}$ are stable except (possibly) for one curve that is semistable.
\end{proof}

\begin{thm}
If $Y$ contains two fibers of type $I_0^*$, then $\mathcal{P}$ is strictly semistable if and only if there exists a one-parameter subgroup $\lambda$ (and coordinates in $\mathbb{P}^2$) such that the two curves corresponding to the fibers of type $I_0^*$  are both non-stable with respect to this $\lambda$.
\label{twoi0star}
\end{thm}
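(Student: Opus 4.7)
The plan is to reduce the statement to the assertion "strictly semistable $\iff$ not stable with a simultaneous destabilizer" using that semistability is automatic in this setting, and then prove each direction separately --- the backward one by a short Pl\"{u}cker computation and the forward one by a weight-filtration argument on $V_{\mathcal{P}}$.

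First I would record the relevant properties of the pencil. Each $I_0^*$ fiber has Euler number $6$, and a rational elliptic surface has total Euler number $12$, so the two $I_0^*$ fibers account for all the singular fibers of $Y$; in particular $Y$ contains no fiber of type $II^*$, and Theorem \ref{m3} gives that $\mathcal{P}$ is semistable, so "strictly semistable" becomes equivalent to "not stable". Moreover the multiple fiber must have Kodaira type $I_0$, so $C$ is smooth and $2C$ is stable by \cite{shah}. All other curves in $\mathcal{P}$ correspond to smooth (reduced) fibers and, by \cite{azconstr}*{Prop.~4.15}, have log canonical threshold strictly greater than $1/2$, so they are stable by \cite{hacking}, \cite{kimlee}. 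Finally, $B_1$ and $B_2$ each satisfy $lct(\mathbb{P}^2, B_i)=1/2$, so by the same references they are semistable but never unstable as plane sextics, meaning $\mu(B_i,\lambda')\leq 0$ for every $1$-PS $\lambda'$.

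For the $(\Leftarrow)$ direction, suppose $\lambda$ (with normalized weight $a$) is a simultaneous destabilizer of $B_1,B_2$, and pick $f,g$ to be their defining polynomials in the given coordinates. Non-stability of each curve says $f_{ij}=g_{ij}=0$ whenever $e_{ij}(a)<0$, and the identity $e_{ijkl}(a)=e_{ij}(a)+e_{kl}(a)$ then forces that whenever $e_{ijkl}(a)<0$, at least one of $e_{ij}(a),e_{kl}(a)$ is negative, so the Pl\"{u}cker minor $m_{ijkl}=f_{ij}g_{kl}-f_{kl}g_{ij}$ vanishes because \emph{both} terms in it vanish. Hence $\mathcal{P}$ is not stable by Proposition \ref{hm}, and combined with semistability it is strictly semistable.

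For the $(\Rightarrow)$ direction, assume $\mathcal{P}$ is strictly semistable, hence not stable, and use Proposition \ref{hm} to produce a $1$-PS $\lambda$ and coordinates on $\mathbb{P}^2$ with $\mu(\mathcal{P},\lambda)\geq 0$. The $2$-dimensional subspace $V_\mathcal{P}\subset S^6V^*$ carries a filtration by $\lambda$-weight with two jump weights $w_1\leq w_2$ satisfying $w_1+w_2=\mu(\mathcal{P},\lambda)\geq 0$. If $w_1\geq 0$, then every $C\in\mathcal{P}$ has $\mu(C,\lambda)\geq w_1\geq 0$, and in particular both $B_1$ and $B_2$ are non-stable via $\lambda$. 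If instead $w_1<0<w_2$, then there is a unique (up to scalar) $L\in V_\mathcal{P}$ with $\mu(L,\lambda)=w_2>0$, making $L$ \emph{unstable} as an individual plane sextic; but by the first paragraph every curve in $\mathcal{P}$ is semistable (either stable, or equal to one of the strictly semistable curves $B_i$, or equal to the stable curve $2C$), so no curve in $\mathcal{P}$ can satisfy $\mu>0$ for any $1$-PS. This rules out the second case, completing the proof. The main obstacle is setting up the weight filtration cleanly in dimension two and keeping straight the distinction between "non-stable" ($\mu\geq 0$) and "unstable" ($\mu>0$) at the level of individual sextics; the exclusion of the degenerate "one very destabilized, one not destabilized" scenario relies crucially on the fact that $lct=1/2$ \emph{exactly} for $I_0^*$ curves, which is precisely where the hypothesis "fibers of type $I_0^*$" (as opposed to the more degenerate $IV^*$, $III^*$, $II^*$) is being used.
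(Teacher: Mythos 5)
Your proof takes a genuinely different route from the paper's. The paper's argument is essentially three lines: by \cite{azconstr}*{Proposition 4.16} each curve coming from an $I_0^*$ fiber has log canonical threshold at least $1/2$ and is therefore semistable by \cites{hacking,kimlee}; the Euler characteristic forces $C$ to be smooth, so $2C$ is stable by \cite{shah}; and the equivalence is then exactly the content of \cite{azstabd}*{Theorem 1.6} (the criterion for a pencil with at most two strictly semistable members and all other members stable). You instead re-derive the relevant criterion from scratch via the weight filtration on the two-dimensional subspace and the identity $\mu(\mathcal{P},\lambda)=w_1+w_2$; that identity is correct (with $w_1$ and $w_2$ the minimal and maximal values of $\mu(\,\cdot\,,\lambda)$ over the members of the pencil), and your $(\Leftarrow)$ computation with the Pl\"{u}cker minors is sound as far as it goes. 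One repair is needed in any case: you cannot quote Theorem \ref{m3} for the semistability of $\mathcal{P}$, since that is a main theorem whose proof rests on the results of Section \ref{halphen}; use Theorem \ref{Hnotstable}/\ref{Hunstable} (proved independently earlier in the section), or observe directly that $w_2\leq 0$ for every $\lambda$ because no member of $\mathcal{P}$ is unstable, whence $\mu(\mathcal{P},\lambda)=w_1+w_2\leq 0$.

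The substantive gap is that the two cases of your $(\Rightarrow)$ direction do not coexist with facts you yourself establish. In Case 1 you assume $w_1\geq 0$; but $w_1=\min_h\mu(C_h,\lambda)\leq\mu(2C,\lambda)$, and you have already recorded that $2C$ is a stable sextic (as is the generic member, a reduced sextic with nine double points and hence of log canonical threshold greater than $1/2$), so $\mu(2C,\lambda)<0$ for every one-parameter subgroup and every choice of coordinates. Hence Case 1 is impossible; since you also rule out Case 2, your argument actually proves that $\mathcal{P}$ is always stable in this situation. Equivalently, under your reading of ``non-stable with respect to $\lambda$'' (all coefficients of negative weight vanish, a condition that is linear and therefore inherited by every member of the pencil, including the stable generic member), the simultaneous-destabilization hypothesis can never be satisfied, and the theorem you prove is vacuous --- which is plainly not the intent, given that Theorem \ref{m1}(iv) treats the existence of such a $\lambda$ as a genuine possibility. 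So either you must state explicitly that both sides of the equivalence are empty, or --- more likely --- the condition ``$\lambda$ destabilizes the two curves simultaneously'' in \cite{azstabd}*{Theorem 1.6} is not the coefficient-wise condition you are using. Since your $(\Leftarrow)$ direction depends entirely on that coefficient-wise reading (it is what makes \emph{both} terms of each minor $m_{ijkl}$ vanish), the proof cannot be accepted until the intended meaning of the hypothesis is pinned down and both implications are checked against that meaning rather than against a vacuous surrogate.
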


\begin{proof}
By \cite{azconstr}*{Proposition 4.16}, if $F$ is a fiber of type $I_0^*$, then the corresponding plane curve $B$ is such that $lct(\mathbb{P}^2,B)\geq \frac{1}{2}$, hence it is semistable \cites{hacking, kimlee}. The result then follows from \cite{azstabd}*{Theorem 1.6}. Note that from the topological Euler characteristic of $Y$ we know $C$ has to be smooth, hence stable \cite{shah}.
\end{proof}

And when $C$ is singular we prove:

\begin{thm}
If $C$ is singular and $Y$ contains exactly one fiber $F$ of type $I_n^*,n\leq 4$, then $\mathcal{P}$ is strictly semistable if and only if there exists a one-parameter subgroup $\lambda$ (and coordinates in $\mathbb{P}^2$) such that $2C$ and $B=\pi(F)$ are both non-stable with respect to this $\lambda$.
\end{thm}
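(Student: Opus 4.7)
The plan is to adapt closely the strategy used in Theorem \ref{twoi0star}: reduce the analysis to a pencil containing exactly two strictly semistable curves---$2C$ and $B=\pi(F)$---with all other curves stable, and then invoke \cite{azstabd}*{Theorem 1.6}.

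First I would identify the non-stable curves in $\mathcal{P}$. By Persson's classification \cite{perssonlist} of configurations of singular fibers on rational elliptic surfaces of index two, the assumption that $Y$ has exactly one non-reduced fiber $F$ of type $I_n^*$ with $n\leq 4$ (in addition to the multiple fiber associated to $2C$) forces every remaining non-multiple fiber of $Y$ to be reduced. By \cite{azconstr}*{Propositions 4.14 and 4.15}, the plane curves corresponding to these reduced fibers have log canonical threshold strictly greater than $1/2$, so by \cites{hacking, kimlee} they are stable. Next, for the two distinguished curves: \cite{azconstr}*{Proposition 4.16} applied to $F$ of type $I_n^*$ with $n\leq 4$ gives $lct(\mathbb{P}^2, B)\geq 1/2$, and since $B$ is non-reduced this shows that $B$ is strictly semistable. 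For $2C$: in either direction of the biconditional the pencil $\mathcal{P}$ must be semistable, so every curve in $\mathcal{P}$ is semistable (the contrapositive of the standard fact that an unstable member destabilizes the whole pencil); in particular $2C$ is semistable, and combined with the hypothesis that $C$ is singular, Shah's criterion \cite{shah} forces $C$ to be a nodal cubic, so that $2C$ is strictly semistable.

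With the non-stable curves in $\mathcal{P}$ pinned down to exactly $2C$ and $B$, and with all other curves stable, the conclusion follows at once from \cite{azstabd}*{Theorem 1.6}. The main obstacle is the a priori possibility that $C$ is cuspidal, in which case $2C$ would be unstable and $\mathcal{P}$ itself unstable; this scenario is ruled out precisely by the semistability-of-every-curve reduction in the previous step, which is the key adjustment relative to the argument for Theorem \ref{twoi0star}, where the smoothness of $C$ was a consequence of the topological Euler characteristic and $2C$ was automatically stable.
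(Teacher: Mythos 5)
Your overall strategy is exactly the paper's: show that the only non-stable members of $\mathcal{P}$ are $2C$ and $B$, with all other members stable, and then invoke \cite{azstabd}*{Theorem 1.6}. The paper's proof is precisely this one-line reduction, and your treatment of $B$ (via \cite{azconstr}*{Proposition 4.16}, $lct(\mathbb{P}^2,B)\geq 1/2$, and \cites{hacking,kimlee}) and of the remaining reduced fibers matches what the paper does elsewhere (e.g.\ in Corollary \ref{badfiberstable}).

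However, your argument that $2C$ is semistable has a genuine gap. You deduce it from the claim that ``in either direction of the biconditional the pencil must be semistable, so every curve in $\mathcal{P}$ is semistable.'' This is doubly problematic. First, in the ``if'' direction you are assuming only the existence of a destabilizing $\lambda$ for the two curves and trying to \emph{prove} $\mathcal{P}$ is strictly semistable, so you cannot assume semistability of $\mathcal{P}$ there without circularity. Second, the ``standard fact'' you invoke --- that an unstable member destabilizes the whole pencil --- is false: the Hilbert--Mumford weight of a pencil with respect to $\lambda$ is governed by the two largest member weights, so a single unstable member with all other members very stable need not destabilize the pencil (already in Miranda's cubic setting, stable pencils routinely contain cuspidal, hence unstable, cubics). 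The correct and much simpler route, implicit in the paper, is structural: over a field of characteristic zero the multiple fiber of $Y$ is of type $I_n$ (\cite{dc}*{Proposition 5.1.8}, quoted in Section \ref{halphen}), so $C$ is reduced with at worst nodal singularities; since $C$ is assumed singular, Shah's classification then gives that $2C$ is strictly semistable, with no reference to the stability of $\mathcal{P}$ at all. With that substitution your proof is correct and coincides with the paper's.
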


\begin{proof}
Since both $2C$ and $B$ are semistable  and all other curves in $\mathcal{P}$ are stable, the result follows from \cite{azstabd}*{Theorem 1.6}.
\end{proof}

\begin{thm}
If $C$ is singular, $Y$ contains a fiber $F$ of type $II^*,III^*$ and $IV^*$ and the curve $B=\pi(F)$ is semistable, then $\mathcal{P}$ is strictly semistable if and only if there exists a one-parameter subgroup $\lambda$ (and coordinates in $\mathbb{P}^2$) such that $2C$ and $B$ are both non-stable with respect to this $\lambda$.
\label{2sssbad}
\end{thm}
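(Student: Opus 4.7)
The plan is to reduce the theorem to a direct application of \cite{azstabd}*{Theorem 1.6}, which asserts that if a pencil contains at most two non-stable curves then it is strictly semistable if and only if a single one-parameter subgroup simultaneously destabilizes both. To invoke this result, I need to verify the following three facts about the pencil $\mathcal{P}$: that $2C$ is strictly semistable, that $B$ is strictly semistable, and that every other curve in $\mathcal{P}$ is stable.

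The first fact follows from \cite{shah}: since $C$ is a singular (irreducible) cubic, the double cubic $2C$ falls into the strictly semistable locus of sextics classified there, just as in the proofs of Theorems \ref{twoi0star} and the preceding $I_n^*$ theorem. The second fact is part of the hypothesis, so no work is required beyond recording it. For the third fact, I would proceed as in the proof of Corollary \ref{badfiberstable}: when $Y$ contains a fiber of type $II^*$, $III^*$, or $IV^*$, Persson's classification \cite{perssonlist} restricts all remaining singular fibers to reduced Kodaira types ($I_n$, $II$, $III$, $IV$ in the allowed configurations). By \cite{azconstr}*{Propositions 4.14 and 4.15}, every plane curve $B'$ associated with such a reduced fiber satisfies $lct(\mathbb{P}^2, B') > 1/2$, and by \cites{hacking, kimlee} this log canonical threshold bound forces $B'$ to be stable.

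Having established that $\mathcal{P}$ contains exactly the two non-stable curves $2C$ and $B$, \cite{azstabd}*{Theorem 1.6} yields the claimed equivalence immediately: $\mathcal{P}$ is strictly semistable precisely when $2C$ and $B$ can be simultaneously destabilized by one and the same one-parameter subgroup $\lambda$ in an appropriate choice of coordinates on $\mathbb{P}^2$.

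The main obstacle I anticipate is the verification that all non-$B$, non-$2C$ curves of the pencil really are stable. A priori, it is not immediate that no other non-reduced or otherwise degenerate curve appears alongside a bad fiber of type $II^*$, $III^*$, or $IV^*$; this rests on the global constraint imposed by the topological Euler characteristic $e(Y)=12$, exploited via Persson's list. Once this case analysis is settled, the rest of the argument is the same formal reduction used in the proofs of the $I_n^*$ statements earlier in this subsection.
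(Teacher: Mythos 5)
Your proposal is correct and follows essentially the same route as the paper: the paper's proof is a one-line reduction to \cite{azstabd}*{Theorem 1.6}, justified by the observation that $2C$ and $B$ are both semistable and every other curve in $\mathcal{P}$ is stable. You simply spell out the supporting facts (Shah's classification for $2C$, Persson's list plus the log canonical threshold bounds for the remaining curves) that the paper leaves implicit, exactly as in its proof of Corollary \ref{badfiberstable}.
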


\begin{proof}
Again,  the result follows from \cite{azstabd}*{Theorem 1.6} because both $2C$ and $B$ are semistable  and all other curves in $\mathcal{P}$ are stable.
\end{proof}

Finally, in order to complete our description, we need to study the stability of $\mathcal{P}$ when $F$ is a fiber of type $II^*,III^*$ or $IV^*$.

\subsection{The stability of $\mathcal{P}$ when $F$ is of type $II^*$}
\label{seciistar}

When $F$ of type $II^*$, then \cite{azconstr}*{Theorem 5.15} tells us $B$ can only be realized by one of the following plane curves:
\begin{enumerate}[(i)]
\item a triple conic 
\item a nodal cubic and an inflection line, with the line taken with multiplicity three 
\item two triples lines
\item a conic and a tangent line, with the line taken with multiplicity four 
\item a line with multiplicity five and another line 
\end{enumerate}

If $B$ is a triple conic, then $B$ is strictly semistable \cite{shah}. In this case, if $C$ is smooth, then $\mathcal{P}$ is stable (Corollary \ref{badfiberstable}) and if $C$ is singular, then $\mathcal{P}$ is strictly semistable if and only if there exists a one-parameter subgroup $\lambda$ (and coordinates in $\mathbb{P}^2$) such that $2C$ and $B$ are both non-stable with respect to this $\lambda$ (Theorem \ref{2sssbad}).

When $B$ is one of the curves in $(ii),(iii),(iv)$ or $(v)$ then we can use the explicit constructions obtained in \cite{azconstr} to conclude $\mathcal{P}$ is unstable.

\begin{prop}
If $Y$ contains a fiber of type $II^*$ and $\mathcal{P}$ contains a curve consisting of a triple line and a nodal cubic, then $\mathcal{P}$ is unstable.
\label{triple+cubicuns}
\end{prop}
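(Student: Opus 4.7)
The plan is to invoke the explicit unstability criterion of Theorem~\ref{unsreg1} by exhibiting coordinates and generators of $\mathcal{P}$ in which one of the listed vanishing patterns holds. Write $B = 3L + C_3$ with $L$ a line and $C_3$ a nodal cubic. Since $Y$ contains a fiber of type $II^*$, the classification recalled above (and established in \cite{azconstr}*{Theorem 5.15}) forces $L$ to be an inflection line of $C_3$, meeting it with multiplicity three at a single point $p$.

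First, choose projective coordinates $[x,y,z]$ on $\mathbb{P}^2$ so that $L = \{x=0\}$ and $p = (0:0:1)$. The inflection condition $L \cdot C_3 = 3p$ forces $C_3$ to have a defining polynomial of the form $y^3 + x\, q(x,y,z)$ for some quadratic form $q$, and the polynomial $f$ defining $B$ is therefore
\[
f = x^3\bigl(y^3 + x\, q(x,y,z)\bigr) \in \langle x^3y^3,\, x^4z^2,\, x^4yz,\, x^4y^2,\, x^5z,\, x^5y,\, x^6\rangle,
\]
with $f_{33}\neq 0$. This places $f$ in exactly the shape required for case (e) of Theorem~\ref{unsreg1}.

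Next, extract from the explicit construction in \cite{azconstr} for Halphen pencils of index two whose associated rational elliptic surface carries a $II^*$ fiber the position of the multiple cubic $C$ relative to $(L, C_3, p)$. In every such normal form, all nine base points of $\mathcal{P}$ are infinitely near to $p$, which forces $C$ to be smooth, to pass through $p$, and to have contact of order at least three with $L$ at $p$, i.e.\ $L$ is simultaneously an inflection line of $C$ at $p$. Translating these geometric facts into the polynomial $g$ defining $2C$ yields $g(0,y,z) = g_{05}\,y^5 z + g_{06}\,y^6$, so $g_{00} = g_{01} = g_{02} = g_{03} = g_{04} = 0$; combined with the fact that $2C$ has multiplicity two at $p$ with tangent cone proportional to $x^2$, this also gives $g_{10} = g_{11} = 0$. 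These are precisely the conditions on $g$ demanded by case (e) of Theorem~\ref{unsreg1}, so the pencil $\mathcal{P}$ is unstable.

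The main obstacle is the geometric verification, on the side of $Y$, that $C$ must share the flex of $C_3$ on $L$: this requires tracing through the infinitely-near tower of blow-ups producing a $II^*$ fiber and checking that the strict transforms of $L$, $C_3$ and $C$ pass through the same infinitely near points up to the relevant level. Once this is in hand, the coefficient-level translation to the vanishing pattern in case (e) of Theorem~\ref{unsreg1} is routine and the conclusion follows by a single citation.
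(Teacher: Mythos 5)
Your overall skeleton matches the paper's: both proofs feed the explicit geometric configuration from \cite{azconstr} into one of the coordinate vanishing criteria (you use case (e) of Theorem \ref{unsreg1}; the paper checks Case (1) of Theorem \ref{unstable2}, equivalently Theorem \ref{equns}), and your coefficient bookkeeping for $f=x^3(y^3+xq)$ and $g=h^2$ is correct \emph{granted} the geometric input. The problem is precisely that input. You justify the key fact --- that $C$ is smooth and $L$ is an inflection line of $C$ at the same flex $p$ of $C_3$ --- by asserting that ``all nine base points of $\mathcal{P}$ are infinitely near to $p$,'' and this assertion is false. In the paper's own normal form, $B$ is cut out by $x^3(xz^2-y^2(y+x))$ and $C$ by $x^2y+xz^2-y^3-xy^2$; here $C_3=\{xz^2-y^3-xy^2=0\}$ and $C$ also meet at $(1:0:0)$ with intersection multiplicity $2$, so one of the nine base points sits at $(1:0:0)$, away from $p=(0:0:1)$ (the count is $8\cdot 4+4=36$). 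So the premise from which you deduce the position of $C$ does not hold, and you moreover flag the verification of that premise as ``the main obstacle'' without carrying it out --- which is exactly the step the paper discharges by citing \cite{azconstr}*{Example 7.57}.

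The fix is simple: replace the infinitely-near-points argument by a direct appeal to \cite{azconstr}*{Example 7.57}, which states that for a $II^*$ fiber realized as $3L+C_3$ the line $L$ is an inflection line of both the nodal cubic $C_3$ and the (smooth) cubic $C$. With that in hand, $h(0,y,z)=cy^3$ gives $h_{00}=h_{01}=h_{02}=0$, hence $g_{00}=\cdots=g_{05}=0$ and $g_{10}=g_{11}=0$ (note $g(0,y,z)=c^2y^6$ exactly; your $g_{05}y^5z$ term cannot occur since $g$ is a square), and case (e) of Theorem \ref{unsreg1} applies as you say. As written, though, the argument has a genuine gap at the one place where real geometric content is needed.
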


\begin{proof}
Let $\mathcal{P}$ and $Y$ be as above. One can show that the line is an inflection line of both the nodal cubic and $C$, which is smooth \cite{azconstr}*{Example 7.57}.

In particular, we can find coordinates in $\mathbb{P}^2$ so that the curve $B$ has equation $x^3(xz^2-y^2(y+x))=0$ and $C$ is given by $x^2y+xz^2-y^3-xy^2=0$. Then the Pl\"{u}cker coordinates of $\mathcal{P}$ with respect to these coordinates satisfy the conditions in Case (1) of Theorem \ref{unstable2} and we conclude $\mathcal{P}$ is unstable. Alternatively, we can easily check the equations for $B$ and $2C$ belong to Case 4 of Theorem \ref{equns}. 

\end{proof}

\begin{prop}
If $Y$ contains a fiber of type $II^*$ and $\mathcal{P}$ contains a curve consisting of two triple lines, then $\mathcal{P}$ is unstable.
\label{2triplelinesuns}
\end{prop}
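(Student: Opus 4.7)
The plan is to mirror the approach of Proposition \ref{triple+cubicuns}: use the explicit classification in \cite{azconstr} to produce a normal form for the pair $(B,C)$, and then verify that the resulting generators satisfy one of the vanishing conditions in Theorem \ref{unstable2} (equivalently, fall into one of the cases of Theorem \ref{equns}).

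The first step is to invoke the analysis in \cite{azconstr} to show that, when $B$ consists of two triple lines $L_1$ and $L_2$ and $F$ is of type $II^*$, the cubic $C$ is forced to be singular, with a node at $L_1 \cap L_2$ whose two tangent branches coincide with $L_1$ and $L_2$. Indeed, for $B$ to have multiplicity $\geq 2$ at every base point of the Halphen pencil, the base locus must be supported on $L_1 \cup L_2$; the fact that $F$ is of type $II^*$ (hence very degenerate) together with the requirement of nine base points of multiplicity two pins down the local structure of $C$ at $L_1\cap L_2$ and forces all the base points (counted with infinitely near ones) to lie there.

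Next I would choose coordinates so that $L_1 = \{x=0\}$, $L_2 = \{y=0\}$, and the node of $C$ sits at $(0,0,1)$, producing $B: x^3y^3 = 0$ and (up to further projective changes) a normal form such as $C: xyz + x^3 + y^3 = 0$. A direct expansion gives $C^2 = x^6 + 2x^4yz + 2x^3y^3 + x^2y^2z^2 + 2xy^4z + y^6$, from which one reads off that the coefficients of the monomials $y^kz^{6-k}$ vanish for $k=0,\ldots,5$. Since $f = x^3y^3$ lies in $\langle x^3y^3, x^4z^2, x^4yz, x^4y^2, x^5z, x^5y, x^6 \rangle$ and $g=C^2 \in \rangle z^6, yz^5, y^2z^4, y^3z^3, y^4z^2, y^5z \langle$, the pair $(f,g)$ belongs to Case 5 of Theorem \ref{equns}, and equivalently the Pl\"{u}cker coordinates of $\mathcal{P}$ satisfy the vanishing conditions in Case 5 of Theorem \ref{unstable2}. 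The unstability of $\mathcal{P}$ then follows.

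The main obstacle lies in the first step: rigorously establishing from \cite{azconstr} that any Halphen pencil of index two with $B$ a union of two triple lines and $F$ of type $II^*$ must have the cubic $C$ realize the nodal configuration described above (and is moreover projectively unique). Once this geometric input is secured, the rest of the argument is a routine coefficient computation and a direct appeal to the stability criterion developed in Section \ref{sc}.
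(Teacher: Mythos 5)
The overall strategy is the right one (produce a normal form for $(B,C)$ from \cite{azconstr} and check it against Case 5 of Theorem \ref{equns} / Theorem \ref{unstable2}), but the geometric input you supply in the first step --- the step you yourself flag as the main obstacle --- is not what actually happens, and it leads to a configuration that is not even a Halphen pencil of index two. You claim that $C$ is forced to be nodal at $P=L_1\cap L_2$ with branches tangent to $L_1$ and $L_2$. If $C$ had a node at $P$, then $\mathrm{mult}_P(2C)=4$ and $\mathrm{mult}_P(B)=6$, so the generic member of $\mathcal{P}=\lambda B+\mu(2C)$ would have multiplicity $4$ at the base point $P$, contradicting the defining property that all nine base points of a Halphen pencil of index two have multiplicity two. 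The configuration established in \cite{azconstr}*{Example 7.56} is quite different: $C$ is smooth, one of the lines is an inflection line of $C$ (at $L_1\cap L_2$, which lies on $C$), and the other line is tangent to $C$ with multiplicity two at a second point. The resulting normal form is $B: x^3y^3=0$ and $C: z^2x-y(y-x)(y-\alpha x)=0$ with $\alpha\in\mathbb{C}\setminus\{0,1\}$, and it is this pair that one checks against Case 5 of Theorem \ref{equns} (or Case (1) of Theorem \ref{unstable2}).

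Your coefficient computation for $g=(xyz+x^3+y^3)^2$ is internally consistent with Case 5, but since the normal form it is based on cannot occur, the verification does not establish the proposition. To repair the argument you should replace your first step by the actual classification result (one inflection line plus one bitangent-type line on a smooth $C$) and redo the membership check for $g=C^2$ with the correct $C$; that check does go through, since no monomial $y^kz^{6-k}$ with $0\le k\le 5$ appears in $(z^2x-y(y-x)(y-\alpha x))^2$.
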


\begin{proof}
Let $\mathcal{P}$ and $Y$ be as above. One can show that one of the lines is an inflection line of $C$ and the other line must be tangent to the cubic with multiplicity two \cite{azconstr}*{Example 7.56}.

In particular, we can find coordinates in $\mathbb{P}^2$ so that $B$ is given by $x^3y^3=0$ and $C$ is given by $z^2x-y(y-x)(y-\alpha\cdot x)=0$, where $\alpha \in \mathbb{C}\backslash \{0,1\}$. Then the Pl\"{u}cker coordinates of $\mathcal{P}$ with respect to these coordinates satisfy the conditions in Case (1) of Theorem \ref{unstable2} and we conclude $\mathcal{P}$ is unstable. Alternatively, we can easily check the equations for $B$ and $2C$ belong to Case 5 of Theorem \ref{equns}.
\end{proof}

\begin{prop}
If $Y$ contains a fiber of type $II^*$ and $\mathcal{P}$ contains a curve consisting of a conic and a tangent line, with the line taken with multiplicity four, then $\mathcal{P}$ is unstable.
\label{line4uns}
\end{prop}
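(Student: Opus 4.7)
The plan is to follow the template of Propositions \ref{triple+cubicuns} and \ref{2triplelinesuns}. There are three steps: extracting a geometric normal form from \cite{azconstr}, making an explicit coordinate choice, and verifying one of the criteria from Sections \ref{sc}--\ref{gd}.

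First, I would use the classification of type $II^*$ realizations carried out in \cite{azconstr} (the analogues of Examples 7.56 and 7.57 there) to determine the precise incidence of $L$ with the cubic $C$. The expectation, by analogy with the two previous cases, is that $C$ is smooth and that the quadruple line $L$ is an inflection tangent of $C$, meeting $C$ to order three at a single point, which is distinct from the tangency point of $L$ with $Q$.

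Second, with that geometric information I would normalize coordinates $[x,y,z]$ so that $L = \{x=0\}$, the inflection point on $C$ is $(0:0:1)$, and the tangency point of $L$ with $Q$ is $(0:1:0)$. The defining equation of $Q$ then takes the shape
\begin{equation*}
Q = \alpha x^2 + \gamma z^2 + \delta\, xy + \epsilon\, xz,\qquad \gamma,\delta \neq 0,
\end{equation*}
since the conditions $Q(0,1,0)=0$ and $T_{(0:1:0)}Q = \{x=0\}$ kill the $y^2$ and $yz$ coefficients. Meanwhile, $C$ takes the form
\begin{equation*}
C = c\,y^3 + a_{20}x^3 + a_{11}x^2y + a_{02}xy^2 + a_{10}x^2z + a_{01}xyz + a_{00}xz^2, \qquad a_{00}\neq 0,
\end{equation*}
since $C|_{\{x=0\}} = c\,y^3$ encodes the inflection contact and $a_{00}\neq 0$ records smoothness at $(0:0:1)$. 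Consequently, $f = x^4 Q$ lies in $\langle x^4z^2,\, x^4yz,\, x^4y^2,\, x^5z,\, x^5y,\, x^6 \rangle$ with $f_{40} = \gamma \neq 0$, and a direct expansion of $g = C^2$ yields $g_{0j}=0$ for every $j\in\{0,1,2,3,4\}$, because each relevant coefficient is a sum of products of the vanishing coefficients of $z^3$, $yz^2$ and $y^2z$ in $C$.

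Third, I would invoke Theorem \ref{unsreg}(a)(a1), equivalently Case 4 of Theorem \ref{equns} and hence Case 1 of Theorem \ref{unstable2}, to conclude that $\mathcal{P}$ is unstable. The main obstacle is step one: faithfully extracting the correct geometric configuration from \cite{azconstr} and ruling out alternative positions of $L$ relative to $C$. Once that is pinned down, the remainder of the argument is the short and essentially mechanical verification outlined above.
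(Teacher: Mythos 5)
Your overall strategy is the same as the paper's (extract the incidence data of $L$, $Q$ and $C$ from \cite{azconstr}, choose adapted coordinates, and land in one of the cases of Theorems \ref{unstable2}/\ref{equns}/\ref{unsreg}), and your algebra in steps two and three is internally consistent. But the geometric normal form you posit in step one is not the one that a fiber of type $II^*$ actually produces, and you yourself flagged this as the crux. According to the paper (citing \cite{azconstr}*{Example 7.58}), the cubic $C$ passes through the point $Q\cap L$ and is tangent there to $L$ with intersection multiplicity two and to $Q$ with intersection multiplicity six; it is \emph{not} an inflection tangent of $C$ at a point disjoint from $Q\cap L$. In the correct coordinates one takes $B: x^4(y^2+xz)=0$, so that $Q$ passes through $(0:0:1)$ with tangent line $L=\{x=0\}$, and the sextic $2C$ then satisfies only $g_{00}=g_{01}=g_{02}=0$ (not $g_{00}=\cdots=g_{04}=0$). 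This places the pencil in Case 2 of Theorem \ref{equns}, equivalently part (c) of Theorem \ref{unsreg} (where $f_{42}\neq 0$ and $g_{00}=g_{01}=g_{02}=0$), rather than in Case 4 of Theorem \ref{equns} / part (a)(a1) of Theorem \ref{unsreg} as you claim.

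The configuration you describe (inflectional contact of $L$ with $C$ at a point away from $Q\cap L$, with $f_{40}\neq 0$) would indeed be unstable if it occurred, so your verification is not wrong as a piece of algebra; the gap is that this configuration is not realized by a $II^*$ fiber, so your argument proves unstability of a pencil that is not the one in the hypothesis. To repair the proof you need to replace your step one by the tangency data of Example 7.58 and then run the (equally mechanical) verification against Theorem \ref{unsreg}(c) or Case 2 of Theorem \ref{equns}.
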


\begin{proof}
Let $\mathcal{P}$ and $Y$ be as above. One can show that $C$ must be tangent to the conic (resp. the line) at the point $Q\cap L$ with multiplicity six (resp. two) as in \cite{azconstr}*{Example 7.58}.

In particular, we can find coordinates in $\mathbb{P}^2$ so that $B$ is given by the zeros of $x^4(y^2+xz)$ and $C$ is given by $f=\sum f_{ij}x^iy^jz^{6-i-j}=0$, with $f_{00}=f_{01}=f_{02}=0$. Thus, the Pl\"{u}cker coordinates of $\mathcal{P}$ with respect to these coordinates satisfy the conditions in Case (1) of Theorem \ref{unstable2} and we conclude $\mathcal{P}$ is unstable. Alternatively, we can easily check the equations for $B$ and $2C$ belong to Case 2 of Theorem \ref{equns}. 
\end{proof}

\begin{prop}
If $Y$ contains a fiber of type $II^*$ and $\mathcal{P}$ contains a curve consisting of a line with multiplicity five and another line, then $\mathcal{P}$ is unstable.
\label{line5uns}
\end{prop}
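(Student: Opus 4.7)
The plan is to mirror the proofs of Propositions \ref{triple+cubicuns}, \ref{2triplelinesuns}, \ref{line4uns}. First, I would invoke the corresponding explicit construction in \cite{azconstr} (analogous to Examples 7.56--7.58 quoted in the previous proofs) to identify the geometric configuration: if $B = 5L_1 + L_2$ and $F$ is of type $II^*$, one obtains constraints describing how $L_1$ and $L_2$ must meet the cubic $C$. The precise constraints are not needed for the argument, because the extreme concentration of $B$ along $L_1$ already forces the strongest possible vanishing.

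Next, I would pick coordinates $[x,y,z]$ in $\mathbb{P}^2$ so that $L_1$ is cut out by $x=0$. After possibly changing the basis in the $(y,z)$-pencil we may assume $L_2 = ay + bz$ with $(a,b)\neq (0,0)$ (we can in fact assume $L_2 \neq L_1$; otherwise $B=6L_1$ is even more degenerate and the same argument applies trivially). Writing the generator of $\mathcal{P}$ corresponding to $B$ as
\[
f = x^5 L_2 = a\,x^5y + b\,x^5z,
\]
we have $f \in \langle x^5z,\, x^5y,\, x^6\rangle$. Taking $g$ to be the defining polynomial of $2C$, the pair $(f,g)$ falls exactly into Case 1 of Theorem \ref{equns}, which places no condition on $g$ at all. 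Hence the Pl\"ucker coordinates of $\mathcal{P}$ satisfy the vanishing conditions of Case (1) of Theorem \ref{unstable2}, and we conclude that $\mathcal{P}$ is unstable.

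The only step that requires care is citing the right portion of \cite{azconstr} to justify that $B = 5L_1 + L_2$ actually occurs as a fiber giving type $II^*$, so that the proposition is not vacuous. Once this is in hand, the argument is purely formal, and in fact this case is the easiest of the four subcases $(ii)$--$(v)$, because the membership condition for $f$ in Case 1 of Theorem \ref{equns} is automatic for any curve of the form $x^5 \cdot (\text{linear})$ and leaves $g$ completely free.
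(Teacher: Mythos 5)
Your argument is correct and matches the paper's own proof, which likewise concludes by placing the generator $f=x^5\ell$ into Case 1 of Theorem \ref{equns} (equivalently, verifying Case (1) of Theorem \ref{unstable2}); the paper additionally quotes the explicit equations of $B$ and $C$ from \cite{azconstr}*{Example 7.59}, but as you observe, Case 1 imposes no condition on $g$, so that explicit data is not actually needed for the conclusion. Your normalization of $L_2$ to $ay+bz$ is likewise unnecessary, since $x^5\cdot(\alpha x+\beta y+\gamma z)$ already lies in $\langle x^5z,x^5y,x^6\rangle$ for an arbitrary linear form.
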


\begin{proof}
Let $B\in\mathcal{P}$ be the curve consisting of a line with multiplicity five and another line. We can choose coordinates so that $B$ is the curve $x^5(x-z)=0$ and $C$ is the cubic $y^2z=x(x-z)(x-\alpha \cdot z)$ for some $\alpha \in \mathbb{C}\backslash \{0,1\}$ \cite{azconstr}*{Example 7.59}. Then the Pl\"{u}cker coordinates of $\mathcal{P}$ satisfy the vanishing conditions of Case (1) in Theorem \ref{unstable2}. Or, yet, we can easily check the equations for $B$ and $2C$ belong to Case 1 of Theorem \ref{equns}. 
\end{proof}

Combining Propositions \ref{triple+cubicuns} through \ref{line5uns} and \cite{azconstr}*{Theorem 5.15} we obtain the following characterization when $F$ is of type $II^*$:

\begin{thm}
If $Y$ contains a fiber $F$ of type $II^*$ and $B\doteq \pi(F)$ is not a triple conic, then $\mathcal{P}$ is unstable.
\end{thm}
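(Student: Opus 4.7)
The plan is to exploit the case-by-case analysis that has already been set up in the preceding propositions together with the classification of curves $B$ that can arise when $F$ is of type $II^*$. The key observation is that the hypothesis $B \ne$ triple conic cuts off exactly one of the five possibilities listed at the start of Section \ref{seciistar}, leaving four cases that have each been treated separately.

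More precisely, I would begin by invoking \cite{azconstr}*{Theorem 5.15} to conclude that, under the assumption that $F$ is of type $II^*$, the plane curve $B = \pi(F)$ must be one of the five configurations (i)--(v) listed at the beginning of Section \ref{seciistar}. Ruling out (i) by the hypothesis, $B$ falls into one of the four configurations (ii)--(v).

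Then I would simply apply the appropriate previously established proposition in each remaining case: Proposition \ref{triple+cubicuns} handles a nodal cubic plus a triple inflection line, Proposition \ref{2triplelinesuns} handles two triple lines, Proposition \ref{line4uns} handles a conic plus a quadruple tangent line, and Proposition \ref{line5uns} handles a quintuple line plus another line. Each of these propositions concludes that $\mathcal{P}$ is unstable, so the theorem follows at once.

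Since every step amounts to quoting an already-proved statement, there is essentially no obstacle to overcome here; the only point requiring care is to confirm that the classification in \cite{azconstr}*{Theorem 5.15} is indeed exhaustive, so that no additional configuration of $B$ can occur outside the five listed cases. Given that this exhaustiveness is asserted in the cited reference, the proof is genuinely a one-line combination of the ingredients assembled in Section \ref{seciistar}.
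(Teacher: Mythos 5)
Your proposal is correct and follows exactly the paper's argument: the paper likewise obtains this theorem by combining the classification of \cite{azconstr}*{Theorem 5.15} (which lists the five possible configurations for $B$, of which the triple conic is excluded by hypothesis) with Propositions \ref{triple+cubicuns} through \ref{line5uns}, each of which shows $\mathcal{P}$ is unstable in one of the four remaining cases. No further comment is needed.
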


\subsection{The stability of $\mathcal{P}$ when $F$ is of type $III^*$}
\label{seciiistar}

We now consider the case when $F$ is of type $III^*$.

From \cite{azconstr}*{Theorem 5.16} the curve $B$ can only be realized by one of the following plane curves:
\begin{enumerate}[(i)]
\item a double line, a cubic and another line 
\item a double conic and another conic (semistable)
\item a triple conic (semistable) 
\item two triple lines 
\item a triple line, a double line and another line 
\item a triple line, a conic and a line 
\item a triple line and a cubic 
\item a conic and a line, with the line taken with multiplicity four 
\item a line with multiplicity four and two other lines  
\end{enumerate}

If $B$ is semistable there are two possibilities: either $C$ is smooth, in which case $\mathcal{P}$ is stable (Corollary \ref{badfiberstable}); or $C$ is singular and then $\mathcal{P}$ is strictly semistable if and only if there exists a one-parameter subgroup $\lambda$ (and coordinates in $\mathbb{P}^2$) such that $2C$ and $B$ are both non-stable with respect to this $\lambda$ (Theorem \ref{2sssbad}).

When $B$ is unstable we can use the explicit constructions obtained in \cite{azconstr} to conclude $\mathcal{P}$ is strictly semistable.

\begin{prop}
If $Y$ contains a fiber $F$ of type $III^*$ and $B\doteq \pi(F)$ consists of a triple line, a double line and another line in general position, then $\mathcal{P}$ is not stable.
\label{iiistarns}
\end{prop}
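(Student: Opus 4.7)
The plan is to follow the same strategy used in Propositions \ref{triple+cubicuns} through \ref{line5uns}: invoke the explicit construction of Halphen pencils of index two from \cite{azconstr} to put $B$ and $C$ into a convenient normal form, and then verify non-stability either by checking one of the Pl\"{u}cker-coordinate conditions of Theorem \ref{notstable2} or, more directly, by showing that the pair $(f,g)$ of defining polynomials fits one of the cases of Theorem \ref{nsreg}. Since $B = 3L_1 + 2L_2 + L_3$ decomposes as a triple line $L_1$ plus the (reducible) cubic $2L_2 + L_3$, Theorem \ref{nsreg} is the natural tool.

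Because the three lines of $B$ are in general position, I would first choose coordinates $[x:y:z]$ so that $L_1 = \{x=0\}$ is the triple line, $L_2 = \{y=0\}$ is the double line, and $L_3 = \{z=0\}$ is the remaining simple line; then $B$ is defined by $f = x^3 y^2 z$, so that $f_{32}\neq 0$ is the unique nonzero coefficient of $f$. In particular $f$ lies in the subspace $\langle x^3 y^2 z, x^3 y^3, x^4 z^2, x^4 y z, x^4 y^2, x^5 z, x^5 y, x^6 \rangle$ appearing in case (c) of Theorem \ref{nsreg}. The next step is to take from \cite{azconstr} the explicit equation of the cubic $C$ realizing, together with $B$, a Halphen pencil of index two whose associated rational elliptic surface has a fiber of type $III^*$: the combinatorial data of this fiber rigidly controls the tangency and incidence of $C$ with $L_1 \cup L_2 \cup L_3$ at the supports of the nine base points.

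With this explicit $c$ in hand, writing $g = c^2 = \sum g_{ij} x^i y^j z^{6-i-j}$, the proof proceeds by verifying directly that the constraints imposed on the coefficients of $c$ force either condition (c1) of Theorem \ref{nsreg}, namely $g_{00}=g_{01}=g_{02}=g_{03}=0$ together with $g_{10}=g_{11}=0$, or condition (c2), namely the vanishing of the four Pl\"{u}cker minors $m_{0340}, m_{1240}, m_{2140}, m_{3040}$ together with $g_{00}=g_{01}=g_{02}=g_{10}=g_{11}=g_{20}=0$. In either subcase, Theorem \ref{nsreg} immediately yields that $\mathcal{P}$ is not stable; as a sanity check one can also confirm the corresponding Pl\"{u}cker-coordinate vanishings in Theorem \ref{notstable2} directly.

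The main obstacle I anticipate is confirming precisely which of the two subcases (c1) or (c2) is realized by the specific cubic furnished by \cite{azconstr}, since this depends delicately on the multiplicity with which $C$ meets each of the three lines and on which of the distinguished points $[0{:}0{:}1], [0{:}1{:}0], [1{:}0{:}0]$ actually lie on $C$. Once that configuration is pinned down, the vanishings on the $g_{ij}$ (and, if needed, on the four Pl\"{u}cker minors) follow from a short direct calculation with $g = c^2$, completing the proof.
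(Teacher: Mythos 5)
Your proposal follows essentially the same route as the paper: the published proof likewise takes the explicit configuration of $B$ and $C$ from \cite{azconstr}*{Example 7.49}, and then concludes non-stability by checking the vanishing conditions of Case (3) of Theorem \ref{notstable2}, noting Theorem \ref{nsreg} as the alternative you lead with. Your reading of $B=3L_1+(2L_2+L_3)$ as the $3L+C'$ shape of case (c) of Theorem \ref{nsreg}, with the remaining work being to pin down which vanishings on $g=c^2$ the $III^*$ configuration forces, matches the paper's (equally terse) argument.
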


\begin{proof}
Let $\mathcal{P}$ and $Y$ be as above. One can find coordinates in $\mathbb{P}^2$ as in \cite{azconstr}*{Example 7.49} so that the Pl\"{u}cker coordinates of $\mathcal{P}$ with respect to these coordinates satisfy the conditions in Case (3) of Theorem \ref{notstable2} and we conclude $\mathcal{P}$ is not stable. Alternatively, we can also apply Theorem \ref{nsreg}.
\end{proof}

\begin{lema}
If $\mathcal{P}$ contains a curve $B$ and a base point $P$ such that $\text{mult}_P(B)=6$, then $\mathcal{P}$ is not stable.
\label{m6}
\end{lema}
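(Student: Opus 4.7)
The plan is to apply the Hilbert--Mumford criterion in the form of Proposition \ref{hm}, with the boundary one-parameter subgroup $\lambda$ corresponding to $a=1$, i.e.\ with weight vector $(a_x,a_y,a_z)=(1,1,-2)$.

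First I would place $P$ at $(0:0:1)$. Since $\mathcal{P}$ is a Halphen pencil of index two, every curve $g=\sum g_{ij}x^iy^jz^{6-i-j}\in\mathcal{P}$ must have multiplicity at least $2$ at the base point $P$, so $g_{ij}=0$ whenever $i+j\leq 1$. On the other hand, the hypothesis $\text{mult}_P(B)=6=\deg B$ forces $B$ to be a union of six lines through $P$, and so its defining polynomial $f=\sum f_{ij}x^iy^jz^{6-i-j}$ satisfies $f_{ij}=0$ whenever $i+j\neq 6$.

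With this one-parameter subgroup the weight of every Pl\"{u}cker coordinate simplifies to $e_{ijkl}(1)=3(i+j+k+l)-24$. A nonzero minor $m_{ijkl}=f_{ij}g_{kl}-f_{kl}g_{ij}$ forces at least one of $f_{ij}$ or $f_{kl}$ to be nonzero, so one of the pairs $(i,j),(k,l)$ has coordinate sum equal to $6$; the complementary pair then has its $g$-coefficient nonzero, hence coordinate sum at least $2$. Therefore $i+j+k+l\geq 8$ and $e_{ijkl}(1)\geq 0$, which by Proposition \ref{hm} exhibits $\mathcal{P}$ as non-stable.

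I do not foresee any substantive obstacle here: once the supports of $f$ and $g$ are identified, the conclusion reduces to the single inequality $3\cdot 8-24\geq 0$. The step that perhaps deserves a brief justification is the passage from the Halphen index-two hypothesis to the coefficient vanishing $g_{00}=g_{10}=g_{01}=0$, which comes directly from Definition \ref{defHalphen}.
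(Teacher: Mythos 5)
Your proof is correct, but it takes a genuinely different route from the paper. The paper disposes of this lemma in one line by invoking Theorem~1.3 of \cite{azstabd} (a general destabilization criterion for pencils of degree-$d$ curves based on the multiplicities of two members at a common point), using only that $\text{mult}_P(2C)\geq 2$. You instead give a self-contained verification inside the framework of Section~\ref{sc}: placing $P$ at $(0:0:1)$, noting that $\text{mult}_P(B)=6$ confines the support of $f$ to $i+j=6$ while the index-two base condition forces $g_{ij}=0$ for $i+j\leq 1$ in every other member, and then checking that the boundary subgroup $a=1$ gives $e_{ijkl}(1)=3(i+j+k+l)-24\geq 0$ on every nonvanishing Pl\"ucker coordinate; this is exactly condition~(7) of Theorem~\ref{notstable} (equivalently case~7 of Theorem~\ref{notstable2}). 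Your computation is sound --- the choice of generators is immaterial since the Pl\"ucker vector changes only by a scalar, $a=1$ is admissible in the normalization $a_x\geq a_y\geq a_z$, and the inequality $i+j+k+l\geq 8$ follows correctly from the support analysis of the two factors of a nonzero minor. What the paper's route buys is brevity and generality (the cited theorem works in any degree and is reused elsewhere); what yours buys is that the lemma becomes independent of the external reference and transparently exhibits the destabilizing one-parameter subgroup, which is in the spirit of how Propositions~\ref{triple+cubicuns}--\ref{line5uns} are argued.
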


\begin{proof}
Since  $\text{mult}_P(2C)\geq 2$, the result follows from \cite{azstabd}*{Theorem 1.3}.
\end{proof}

\begin{prop}
If $Y$ contains a fiber $F$ of type $III^*$ and $B\doteq \pi(F)$ consists of a triple line, a double line and another line concurrent at a base point, then $\mathcal{P}$ is not stable.
\end{prop}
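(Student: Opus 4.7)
The plan is to reduce directly to Lemma \ref{m6}. The key observation is that the hypothesis of the proposition already forces a base point of the pencil to be a point of multiplicity six on $B$, so no further analysis is needed.

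First, I would compute $\mathrm{mult}_P(B)$, where $P$ is the point at which the three lines are concurrent. Since $B = 3L_1 + 2L_2 + L_3$ is an effective divisor supported on three lines through $P$, and multiplicity at a point is additive on effective divisors, I get
\[
\mathrm{mult}_P(B) \;=\; 3\cdot\mathrm{mult}_P(L_1) + 2\cdot\mathrm{mult}_P(L_2) + \mathrm{mult}_P(L_3) \;=\; 3+2+1 \;=\; 6.
\]
Next, the hypothesis explicitly states that $P$ is a base point of $\mathcal{P}$, so Lemma \ref{m6} applies verbatim to the pair $(B,P)$ and gives that $\mathcal{P}$ is not stable.

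I do not expect any real obstacle: the argument is a one-line application of Lemma \ref{m6} once the multiplicity at the concurrence point is computed. In particular, I would not need to invoke the explicit construction of the pencil from \cite{azconstr}, the Plücker vanishing tables from Section \ref{sc}, or any information about $C$ beyond the fact that $P$ lies on it as a base point. This is in contrast to Proposition \ref{iiistarns}, where the three lines are in general position so $\mathrm{mult}_P(B) \leq 3$ at each base point and Lemma \ref{m6} does not apply, forcing one to use the explicit coordinate model and Theorem \ref{nsreg} instead.
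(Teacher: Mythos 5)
Your argument matches the paper's proof exactly: the paper likewise observes that the concurrence point $P$ is a base point with $\mathrm{mult}_P(B)=3+2+1=6$ and then invokes Lemma \ref{m6}. No gap, and your contrast with Proposition \ref{iiistarns} (where the lines are in general position and this shortcut is unavailable) is an accurate reading of why that case needs the coordinate model instead.
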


\begin{proof}
Let $\mathcal{P},Y$ and $B$ be as above. Then $\mathcal{P}$ contains a base point $P$ (the point where the 3 lines meet) such that $\text{mult}_P(B)=6$, and the result follows from Lemma \ref{m6}. 
\end{proof}

\begin{prop}
If $Y$ contains a fiber $F$ of type $III^*$ and $B\doteq \pi(F)$ consists of a double line, a nodal cubic and another line, then $\mathcal{P}$ is not stable.
\end{prop}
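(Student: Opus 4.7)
The plan is to follow the same template used in Proposition \ref{iiistarns} and the other propositions of this subsection: use the explicit construction of this configuration from \cite{azconstr} to pin down the relative position of the components of $B$ with respect to the cubic $C$, then write down canonical equations in suitable coordinates and either verify the Plücker vanishing conditions of Theorem \ref{notstable2} or, more directly, match the equations of $B$ and $2C$ to one of the cases of Theorem \ref{nsreg1}.

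First I would invoke the explicit description from \cite{azconstr} of Halphen pencils of index two whose associated rational elliptic surface has a fiber of type $III^*$ realized as $B = 2L_1 + L_2 + N$, where $L_1$ and $L_2$ are lines and $N$ is a nodal cubic. This gives the tangency/incidence data among $L_1$, $L_2$, $N$, and the (necessarily reduced) cubic $C$: in particular the constraints imposed by having nine base points of multiplicity two (contributing $B\cdot C = 18$ with multiplicity $2\cdot\text{mult}_P(C)$ at each base point) force $L_1$ to be tangent to $C$ in a prescribed way and determine the intersection behavior at the node of $N$ and at the point $L_2\cap L_1$. Second, I would fix coordinates so that $L_1 = \{x=0\}$; then $B = x^2 \cdot Q$ with $Q = L_2 \cdot N$ a quartic, so the defining polynomial $f$ of $B$ lies in the subspace
\[
\langle x^2z^4, x^2yz^3, x^2y^2z^2, x^2y^3z, x^2y^4, x^iy^jz^{6-i-j} \rangle
\]
with $3\leq i \leq 6$, $0\leq j \leq 6$, $i+j\leq 6$ -- the ambient subspace in each case of Theorem \ref{nsreg1}. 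The tangency data from \cite{azconstr} -- most importantly, the multiplicity of the base point $(0:0:1)$ in $C$ -- then forces a specific pattern of vanishing on the coefficients $g_{ij}$ of $C$, placing the pair $(f,g)$ in one of the cases (a)--(e) of Theorem \ref{nsreg1} and thereby exhibiting $\mathcal{P}$ as not stable.

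The main obstacle will be the geometric bookkeeping: verifying precisely which case of Theorem \ref{nsreg1} is realized, which requires reading off the multiplicity of $(0:0:1)$ on $C$ and the specific tangency behavior between $C$ and the components of $B$ from the construction in \cite{azconstr}. Once the coordinates have been normalized and the tangency data extracted, matching the configuration to the corresponding case is a direct inspection of which monomials appear in $f$ and which coefficients of $g$ must vanish, entirely analogous to the checks carried out in Propositions \ref{triple+cubicuns}--\ref{line5uns} and Proposition \ref{iiistarns}.
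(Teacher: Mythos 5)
Your proposal matches the paper's proof in essence: the paper likewise appeals to the explicit construction in \cite{azconstr} (Example 7.45) to fix coordinates and then checks that the Pl\"{u}cker coordinates satisfy Case (3) of Theorem \ref{notstable2}. Your alternative route of matching the equations of $B$ and $2C$ to a case of Theorem \ref{nsreg1} is exactly the kind of equivalent reformulation the paper itself invokes (via Theorem \ref{nsreg}) in the neighboring propositions, so nothing essential is missing.
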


\begin{proof}
Let $\mathcal{P}$ and $Y$ be as above. One can find coordinates in $\mathbb{P}^2$ \cite{azconstr}*{Example 7.45} so that the Pl\"{u}cker coordinates of $\mathcal{P}$ with respect to these coordinates satisfy the conditions in Case (3) of Theorem \ref{notstable2} and we conclude $\mathcal{P}$ is not stable.
\end{proof}

\begin{prop}
If $Y$ contains a fiber $F$ of type $III^*$ and $B\doteq \pi(F)$ contains a line with multiplicity four, then $\mathcal{P}$ is not stable.
\end{prop}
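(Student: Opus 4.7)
The plan is to first invoke \cite{azconstr}*{Theorem 5.16} to restrict the possibilities for $B$. The hypothesis that $B$ contains a line of multiplicity four narrows the classification to exactly two cases: (viii) $B = 4L + Q$ with $Q$ a conic tangent to $L$, and (ix) $B = 4L + L' + L''$ with two additional lines. In both situations $B$ has the form $4L+Q$ with $Q$ a (possibly reducible) conic, so Theorem \ref{unsreg} is the natural tool to invoke, and in fact it will yield unstability (hence also non-stability).

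Next, for each subcase I would appeal to the explicit constructions in \cite{azconstr} treating a fiber of type $III^*$ with the corresponding $B$ in order to fix coordinates $[x,y,z]$ on $\mathbb{P}^2$ in which $L=\{x=0\}$, the remaining components of $B$ take a normal form, and the cubic $C$ satisfies prescribed conditions at the nine double base points. The key translation is that the nine double base points, forced to lie with high contact on $L$ and on the other components, compel many coefficients $g_{ij}$ of the defining polynomial $g$ of $C$ to vanish. After writing out the resulting form of $g$, one matches it to one of the coefficient patterns in Theorem \ref{unsreg} (case (a), (b) or (c), according to the order of contact of $C$ with $L$), thereby concluding that $\mathcal{P}$ is unstable. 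Alternatively, the Pl\"ucker coordinates $(m_{ijkl})$ of the pair $(B,2C)$ can be checked directly against Theorem \ref{notstable2}, as was done for the preceding propositions in this subsection.

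The main obstacle is the bookkeeping across the two configurations (viii) and (ix): one has to extract from the constructions of \cite{azconstr} the precise incidence data of the nine double base points with $L$ and with the remaining components of $B$, and then convert this incidence data into vanishing of the appropriate $g_{ij}$'s. Once this is done the matching with Theorem \ref{unsreg} is a routine coefficient check, entirely analogous to the verifications carried out in the proofs of Propositions \ref{triple+cubicuns}--\ref{line5uns} for the $II^*$ case.
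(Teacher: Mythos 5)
Your central claim --- that Theorem \ref{unsreg} applies here and ``will yield unstability (hence also non-stability)'' --- is false, and the paper itself refutes it. Proposition \ref{iiistar4line} shows that when $F$ is of type $III^*$ and $B$ contains a line with multiplicity four, the pencil $\mathcal{P}$ is \emph{semistable}, so it cannot be unstable. The reason your matching would fail is that every case of Theorem \ref{unsreg} imposes nontrivial vanishing conditions on the coefficients $g_{ij}$ of the second generator (e.g.\ $g_{00}=\cdots=g_{04}=0$, or $g_{00}=\cdots=g_{03}=g_{10}=g_{11}=g_{12}=g_{20}=0$); geometrically these say that $L$ is an inflection line of $C$, or that $L$ is tangent to $C$ at a point of $L\cap Q$ (Lemma \ref{uns4line}), and the constructions in \cite{azconstr}, Section 6, show that neither configuration occurs for a fiber of type $III^*$. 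So the route through unstability is a dead end, and the ``bookkeeping across the two configurations'' you anticipate would end with the conditions on $g$ simply not being satisfiable.

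The paper's actual proof is far simpler and needs no incidence data at all: choose coordinates with $L=\{x=0\}$; then the generator $f$ defining $B=4L+Q$ is divisible by $x^4$, i.e.\ $f\in\langle x^4z^2,x^4yz,x^4y^2,x^5z,x^5y,x^6\rangle$, and this is exactly Case 1 of Theorem \ref{gennotstable} with $g$ \emph{arbitrary}. Hence the vanishing conditions of case 1 of Theorem \ref{notstable2} hold and $\mathcal{P}$ is not stable. The point you missed is that non-stability here is witnessed by the single curve $B$ alone, with no condition on $C$ or on the base points, whereas unstability would require additional (and in this case unattainable) conditions on $C$.
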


\begin{proof}
If $B$ contains a line with multiplicity four, then we can find coordinates in $\mathbb{P}^2$ and generators of $\mathcal{P}$ which are given by equations as in Case $1$ of Theorem \ref{gennotstable}.
\end{proof}

\begin{prop}
If $Y$ contains a fiber $F$ of type $III^*$ and $B\doteq \pi(F)$ consists of a triple line and a nodal cubic, then $\mathcal{P}$ is not stable.
\end{prop}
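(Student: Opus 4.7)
The plan is to mimic the strategy used in Propositions \ref{triple+cubicuns}--\ref{line5uns} and \ref{iiistarns}, namely to exploit the explicit constructions of Halphen pencils of index two in \cite{azconstr} to produce coordinates in $\mathbb{P}^2$ in which both $B$ and $C$ have a very restricted form, and then verify non-stability either via the Pl\"{u}cker criterion (Theorem \ref{notstable2}, Case (3)) or via the geometric criterion for pencils of the form $3L+C'$ (Theorem \ref{nsreg}).

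First, I would invoke the classification of $III^*$ configurations from \cite{azconstr} (analogous to Example 7.57 used for $II^*$) to pin down the incidence data: the line $L$ must meet the nodal cubic component of $B$ in a prescribed way (tangency at an inflection point, passage through the node, etc.), and the multiple cubic $C$ must meet $L$ with a prescribed order of tangency forced by the requirement that the minimal resolution of the pencil produces a fiber of type $III^*=\tilde{E}_7$. The point is simply to identify, after a projective change of coordinates, one canonical normal form for the pair $(B,C)$; the smoothness or nodal nature of $C$ plays a secondary role in this step.

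Next, placing the triple line at $L=\{x=0\}$, I would write $B=x^3\,h(x,y,z)$ with $h$ the nodal cubic and $f,g$ the defining polynomials of $B$ and $2C$ respectively. The incidence conditions identified in the previous step will translate directly into the vanishing of certain low-order coefficients of $g$ (in particular at the point $(0{:}0{:}1)$, which must sit at least triply on $C$) and into $f$ lying in the span $\langle x^3z^3,x^3yz^2,x^3y^2z,x^3y^3,x^4z^2,x^4yz,x^4y^2,x^5z,x^5y,x^6\rangle$. I would then match these vanishings against case (a) (or one of the other subcases) of Theorem \ref{nsreg}, concluding non-stability. Equivalently, one checks directly that the Pl\"{u}cker coordinates $m_{ijkl}$ vanish for every $(r,s)=(i+k,j+l)$ in the list of Case (3) of Theorem \ref{notstable2}, with the destabilizing one-parameter subgroup corresponding to $a=-1/5$.

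The main obstacle I anticipate is the first step: extracting from \cite{azconstr} the correct normal form for $C$, since the nodal cubic component of $B$ constrains the geometry less rigidly than in the $II^*$ case (where the cubic was forced to be smooth by Euler characteristic reasons). I expect there may be a small number of subcases in the incidence of $L$ with the nodal cubic (meeting it at the node, at an inflection point, or at a smooth tangent point), and each subcase must be shown to satisfy one of the vanishing patterns of Theorem \ref{nsreg}. Once the coordinates are fixed, the Pl\"{u}cker verification is purely combinatorial and parallels verbatim the arguments already given in this subsection.
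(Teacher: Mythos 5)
Your proposal follows essentially the same route as the paper: the paper's proof simply cites the explicit coordinates of \cite{azconstr}*{Example 7.52} and then observes that the Pl\"{u}cker coordinates satisfy the vanishing conditions of Theorem \ref{notstable2}, offering Theorem \ref{nsreg} as the alternative geometric check, exactly as you outline. The only discrepancy is that the paper identifies the relevant vanishing pattern as Case (4) of Theorem \ref{notstable2} (i.e.\ $a=0$) rather than Case (3) ($a=-1/5$), a detail that would surface automatically once the explicit normal form is in hand.
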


\begin{proof}
We can find coordinates in $\mathbb{P}^2$ \cite{azconstr}*{Example 7.52} so that the Pl\"{u}cker coordinates of $\mathcal{P}$ with respect to these coordinates satisfy the conditions in Case (4) of Theorem \ref{notstable2} and we conclude $\mathcal{P}$ is not stable. Alternatively, we can also apply Theorem \ref{nsreg}.
\end{proof}

\begin{prop}
If $Y$ contains a fiber $F$ of type $III^*$ and $B\doteq \pi(F)$ consists of a triple line, a conic and another line,
then $\mathcal{P}$ is not stable.
\end{prop}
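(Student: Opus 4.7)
The plan mirrors the preceding propositions in this subsection. Since $B = 3L + (Q + L')$ with $Q + L'$ a (reducible) cubic, the pencil $\mathcal{P}$ contains a curve of the form $3L + C_3$ required by Theorem \ref{nsreg}, so the strategy is to exhibit coordinates and generators in which the coefficients fall into one of its cases.

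First, I would invoke the corresponding construction from \cite{azconstr} (continuing the pattern of citing the relevant example in Section 7) to obtain a normal form: because $F$ is of type $III^*$, the incidence between the triple line $L$, the conic $Q$, the line $L'$ and the unique multiple cubic $C$ is rigid up to projective equivalence, so one can pick coordinates $[x,y,z]$ with $L = \{x=0\}$. In these coordinates the defining polynomial $f$ of $B$ is divisible by $x^3$, so it lies in $\langle x^3 z^3, x^3 y z^2, x^3 y^2 z, x^3 y^3, x^4 z^2, x^4 y z, x^4 y^2, x^5 z, x^5 y, x^6\rangle$ with some $f_{3j}\neq 0$ for $j\in\{0,1,2,3\}$, while $g$, the equation of $2C$ (or of a convenient second generator), vanishes to high order at the base points lying on $L$.

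Next, I would verify that the coefficients $(g_{ij})$ satisfy one of the vanishing patterns in case (a), (b), (c) or (d) of Theorem \ref{nsreg}, selected according to the order of vanishing of $f$ along $L$ and the incidence of $Q$, $L'$ and $C$ at the points of $L$. An alternative shortcut, already used in the preceding proofs, is to read off the Pl\"ucker coordinates $(m_{ijkl})$ in the chosen basis and check directly that they satisfy the vanishing pattern of Case (3) or Case (4) of Theorem \ref{notstable2}.

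The main obstacle is bookkeeping: a reducible fiber whose support is of the form triple line $+$ conic $+$ line admits several sub-configurations (the three components may be in general position, or two of them may share a base point of $\mathcal{P}$; and $L'$ may be tangent or transverse to $Q$), and each sub-configuration yields slightly different constraints on $g$. One must verify case by case that each configuration still falls into a prescribed pattern of Theorem \ref{nsreg}. In the most degenerate configurations, where three components of $B$ meet at a common base point $P$ so that $\mathrm{mult}_P(B)=6$, it is simpler to appeal to Lemma \ref{m6} directly, exactly as was done in the earlier proposition handling the triple line, double line and another line concurrent at a base point.
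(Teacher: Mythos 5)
Your proposal matches the paper's proof: the paper likewise invokes the explicit coordinates from the corresponding example in \cite{azconstr} (Example 7.51) and checks that the Pl\"ucker coordinates satisfy Case (3) of Theorem \ref{notstable2}, noting that one can alternatively apply Theorem \ref{nsreg} since $B=3L+(Q+L')$ is of the form $3L+C'$. The extra case analysis and the appeal to Lemma \ref{m6} you anticipate are not needed in the paper, since the cited construction already fixes the incidence configuration, but they do no harm.
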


\begin{proof}
Let $\mathcal{P}$ and $Y$ be as above. We can find coordinates in $\mathbb{P}^2$ as in \cite{azconstr}*{Example 7.51} so that the Pl\"{u}cker coordinates of $\mathcal{P}$ with respect to these coordinates satisfy the conditions in Case (3) of Theorem \ref{notstable2} and we conclude $\mathcal{P}$ is not stable. Alternatively, we can also apply Theorem \ref{nsreg}.
\end{proof}

\begin{prop}
If $Y$ contains a fiber $F$ of type $III^*$ and $B\doteq \pi(F)$ consists of two triple lines,
then $\mathcal{P}$ is not stable.
\label{iiistarns1}
\end{prop}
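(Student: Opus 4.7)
The plan is to follow the same strategy used for the analogous configurations in Propositions \ref{triple+cubicuns}--\ref{line5uns} and Propositions \ref{iiistarns}--\ref{iiistarns} above: produce explicit equations for $B$ and $C$ in suitably chosen coordinates, and then either verify the relevant vanishing conditions among the Pl\"ucker coordinates in Theorem \ref{notstable2} or, more efficiently, invoke Theorem \ref{nsreg} directly. Since $B = 3L_1 + 3L_2$ can be written as $3L + C_1$ with $L = L_1$ and $C_1 = 3L_2$ a (degenerate, reducible) cubic, the pencil $\mathcal{P}$ contains a curve of the form $3L + C_1$ and Theorem \ref{nsreg} is indeed available.

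First I would invoke the geometric analysis from \cite{azconstr} for the case where $F$ is of type $III^*$ and $B$ consists of two triple lines: this determines the contact conditions between each line $L_i$ and the (unique, necessarily smooth or mildly singular) cubic $C$, which are dictated by the dual graph $\tilde{E}_7$ after the minimal resolution of the base locus. From these contact data I would extract normal forms, choosing coordinates $[x,y,z]$ so that $L_1 = \{x=0\}$ and $L_2 = \{y=0\}$, yielding $B : x^3 y^3 = 0$ and a specific one-parameter family of cubics $C$ passing through the corresponding nine base points (with prescribed multiplicities along $L_1$ and $L_2$).

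With $B$ and $C$ written explicitly in these coordinates, I would then read off the coefficients of $f$ (a defining polynomial of $B$) and $g$ (a defining polynomial of $2C$). Because $f = x^3 y^3$ has all coefficients equal to zero except $f_{33}$, the nonzero minors $m_{ijkl}$ all involve $f_{33}$, and the vanishing of $m_{ijkl}$ reduces to the vanishing of the corresponding $g_{kl}$. I would then match these vanishings against one of the cases in Theorem \ref{nsreg}, most plausibly case (d), and conclude that $\mathcal{P}$ is not stable. Alternatively, one checks directly that the pairs $(r,s)$ with $M_{rs}=\{0\}$ cover one of the lists in Theorem \ref{notstable2}, by analogy with the neighboring propositions.

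The main obstacle is the first step: verifying that the contact conditions forced on $C$ by the $III^*$ fiber type really do place $f = x^3 y^3$ and $g$ in the configuration required by Theorem \ref{nsreg} (or Theorem \ref{notstable2}). In the parallel $II^*$ situation (Proposition \ref{2triplelinesuns}) one line was an inflection line of $C$ and the other was simply tangent to $C$, and a strictly weaker contact condition is expected in the $III^*$ case; the bookkeeping must be done carefully to make sure the resulting vanishing of coefficients of $g$ is still strong enough to land in one of the non-stable cases. Once the normal form is established, the remaining verification is purely mechanical.
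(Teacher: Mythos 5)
Your plan goes a much longer way around than the paper does, and it stops short of the decisive step. The paper's entire proof is ``It follows from Lemma \ref{m6}'': when $B$ consists of two triple lines, the point $P = L_1 \cap L_2$ is a base point of $\mathcal{P}$ at which $\text{mult}_P(B) = 3+3 = 6$ (and $\text{mult}_P(2C) \geq 2$ automatically, since $2C$ is a doubled curve through every base point), so \cite{azstabd}*{Theorem 1.3} gives non-stability with no choice of coordinates, no normal form for $C$, and no Pl\"ucker computation. This is exactly the same mechanism the paper uses for the configuration ``triple line, double line and another line concurrent at a base point.'' Your proposal never notices this multiplicity-$6$ point, which is the whole content of the argument.

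As written, your alternative route has a genuine gap rather than being merely longer. You correctly identify that everything hinges on extracting from \cite{azconstr} the precise contact conditions between $L_1$, $L_2$ and $C$ forced by the $III^*$ fiber, and then checking that the resulting coefficients of $g$ (the equation of $2C$) land in one of the subcases of Theorem \ref{nsreg}(d) --- but you explicitly leave that verification open (``the bookkeeping must be done carefully''). Since the contact conditions in the $III^*$ case are strictly weaker than in the $II^*$ case of Proposition \ref{2triplelinesuns}, it is not automatic that the vanishing of the $g_{kl}$ is strong enough, and nothing in your write-up establishes it. Either carry out that verification from the relevant example in \cite{azconstr}, or --- far better --- replace the whole argument by the one-line appeal to Lemma \ref{m6}.
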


\begin{proof}
It follows from Lemma \ref{m6}.
\end{proof}

Combining Propositions \ref{iiistarns} through \ref{iiistarns1} and \cite{azconstr}*{Theorem 5.16} we obtain:

\begin{thm}
If $Y$ contains a fiber $F$ of type $III^*$ and $B\doteq \pi(F)$ is unstable, then $\mathcal{P}$ is not stable.
\end{thm}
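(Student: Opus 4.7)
The plan is to combine the classification in \cite{azconstr}*{Theorem 5.16} of the plane curves $B$ that can arise as $\pi(F)$ when $F$ is of type $III^*$ with the case-by-case analysis already carried out in Propositions \ref{iiistarns} through \ref{iiistarns1}. Since the classification produces a finite list of nine geometric configurations (types (i)--(ix) listed at the beginning of Section \ref{seciiistar}), and the hypothesis that $B$ is unstable removes exactly those realizations which are semistable as plane sextics, the statement reduces to verifying that every remaining configuration has already been shown to force $\mathcal{P}$ to be non-stable.

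More concretely, I would first recall that among the nine types of $B$ only type (ii) (a double conic plus another conic) and type (iii) (a triple conic) are semistable, by \cite{shah}; these are precisely the ones marked ``(semistable)'' in the list. Under the hypothesis that $B$ is unstable we are therefore reduced to types (i), (iv), (v), (vi), (vii), (viii) and (ix). I would then match each remaining type with the proposition (or lemma) that already handles it: type (v) is covered by Proposition \ref{iiistarns} in the general-position subcase and by Lemma \ref{m6} in the concurrent subcase (where the common point of the three lines becomes a base point of multiplicity six); type (i) is handled by the proposition using \cite{azconstr}*{Example 7.45}; types (viii) and (ix) both contain a line of multiplicity four and so fall under the proposition appealing to Case $1$ of Theorem \ref{gennotstable}; type (vii) uses \cite{azconstr}*{Example 7.52}; type (vi) uses \cite{azconstr}*{Example 7.51}; and type (iv) follows from Lemma \ref{m6}, since two triple lines in a Halphen pencil of index two necessarily meet at a base point of multiplicity six. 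In each case the conclusion is that $\mathcal{P}$ is not stable, which is precisely what is required.

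The only real obstacle, modest as it is, is bookkeeping: one must verify that the nine types in \cite{azconstr}*{Theorem 5.16} are matched correctly with the hypotheses of the intermediate propositions, and in particular that the concurrent subcases of (v) and the two-triple-lines case (iv) truly produce a base point of multiplicity six so that Lemma \ref{m6} applies. This requires arguing from the structure of a type $III^*$ fiber under $\pi$ that the common intersection point of the lines is forced to lie on $C$, which can be extracted either directly from the constructions in \cite{azconstr} or by noting that any point where $B$ has multiplicity six must be in the base locus (otherwise the generic fiber would be singular there, contradicting smoothness of $Y_\eta$).
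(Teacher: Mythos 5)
Your proposal is correct and follows essentially the same route as the paper: the theorem is obtained by combining the classification of type $III^*$ realizations in \cite{azconstr}*{Theorem 5.16} with the case-by-case Propositions \ref{iiistarns} through \ref{iiistarns1}, exactly as you describe, and your matching of the nine configurations to the individual propositions (and to Lemma \ref{m6} for the concurrent and two-triple-line cases) agrees with the paper's. The only difference is that the paper leaves the bookkeeping implicit and, for the multiplicity-six base point, relies on the explicit constructions in \cite{azconstr} rather than on a general argument.
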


\begin{rmk}
Note that when $F$ is of type $III^*$ and $B\doteq \pi(F)$ is semistable we can refer to Corollary \ref{badfiberstable} and Theorem \ref{2sssbad}.
\end{rmk}

So the remaining question is: Can $\mathcal{P}$ be unstable? We will show the answer to this questions is no.

\begin{lema}
Let $\mathcal{P}$ be a Halphen pencil of index two containing a curve $B$ such that $B=4L+Q$, where $L$ is a line and $Q$ is a conic (possibly reducible). Letting $2C$ denote the unique multiple cubic in $\mathcal{P}$ we have that if $\mathcal{P}$ is unstable, then either
\begin{enumerate}[(i)]
\item $L$ is an inflection line of $C$ or 
\item $L$ is tangent to $C$ at a point where $L$ and $Q$ also intersect 
\end{enumerate}
\label{uns4line}
\end{lema}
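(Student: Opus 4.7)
The plan is to combine the classification of unstable pencils containing a curve of the form $4L+Q$ from Theorem \ref{unsreg} with the fact that one generator of $\mathcal{P}$ is the square $g=c^2$ of the cubic $c$ defining $C$, and then to invoke the Halphen index-two structure to exclude configurations incompatible with (i) or (ii).

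First, by Theorem \ref{unsreg}, after a projective change of coordinates I may take $L=\{x=0\}$ and assume $f$ lies in one of the three prescribed subspaces, so that $\mathcal{P}$ falls into one of the four sub-cases (a1), (a2), (b), or (c) of that theorem. Writing $c=\sum c_{ij}x^iy^jz^{3-i-j}$ and expanding $g=c^2$, one obtains identities such as
\[
g_{00}=c_{00}^2,\qquad g_{02}=2c_{00}c_{02}+c_{01}^2,\qquad g_{20}=2c_{00}c_{20}+c_{10}^2,
\]
together with analogous formulas for the other relevant $g_{ij}$, which allow one to solve the coefficient vanishings on $g$ recursively in terms of the $c_{ij}$. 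The resulting conditions are $c_{00}=c_{01}=c_{02}=0$ in case (a1), $c_{00}=c_{01}=c_{10}=0$ in case (a2), and $c_{00}=c_{01}=0$ in cases (b) and (c).

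Set $P=(0:0:1)$; then $P\in L\cap C$. When $c_{10}\neq 0$, the tangent line to $C$ at $P$ is $c_{10}x+c_{01}y=0$, which coincides with $L$ since $c_{01}=0$; so $L$ is tangent to $C$ at $P$. In case (a1) the extra vanishing $c_{02}=0$ forces $c|_L=c_{03}y^3$ with $c_{03}\neq 0$ (properness of the Halphen pencil forbids $L\subset C$), so $i_P(L,C)=3$ and $L$ is an inflection line of $C$, which is conclusion (i). In cases (b) and (c) the hypothesis $f_{40}=0$ forces $Q(P)=0$, so $P\in L\cap Q$; combined with $L$ being tangent to $C$ at $P$, this is conclusion (ii).

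The main obstacle will be ruling out case (a2) and, more generally, the possibility $c_{10}=0$ in the remaining cases. If $c_{10}=0$ then $C$ is singular at $P$, so $\text{mult}_P(2C)\geq 4$; since $L$ is a component of $B$ with multiplicity four and $P\in L$, also $\text{mult}_P(B)\geq 4$. Hence every member of $\mathcal{P}$ has multiplicity at least four at $P$, forcing $P$ to be a base point of multiplicity at least four in the pencil. This contradicts the defining property of a Halphen pencil of index two, namely that each of its nine base points has multiplicity exactly two. Therefore $c_{10}\neq 0$, which excludes case (a2) and establishes the asserted dichotomy.
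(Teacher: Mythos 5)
Your proof is correct and follows the same route as the paper, whose entire argument is the citation ``It follows from Theorem \ref{unsreg}''; you simply supply the details that citation leaves implicit, namely translating the coefficient vanishings on $g\equiv \beta c^2 \pmod{x^4}$ into conditions on $c$, and ruling out case (a2) (and the sub-case $c_{10}=0$) via the observation that a point where both $B$ and $2C$ have multiplicity at least four would be a base point of multiplicity greater than two, contradicting the definition of a Halphen pencil of index two. Those filled-in steps all check out.
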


\begin{proof}
It follows from Theorem \ref{unsreg}.
\end{proof}

\begin{prop}
If $Y$ contains a fiber $F$ of type $III^*$ and $B\doteq \pi(F)$ contains a line with multiplicity four,
then $\mathcal{P}$ is semistable.
\label{iiistar4line}
\end{prop}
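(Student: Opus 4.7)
The plan is to argue by contradiction. Suppose $\mathcal{P}$ is unstable. Since $B \doteq \pi(F)$ contains a line with multiplicity four, we write $B = 4L + Q$ where $Q$ is a conic (possibly reducible). Then by Lemma \ref{uns4line} at least one of the following holds: (i) $L$ is an inflection line of $C$, so that $(L \cdot C)_P = 3$ at some point $P$; or (ii) $L$ is tangent to $C$ at a point of $L \cap Q$. The goal is therefore to show that neither (i) nor (ii) is compatible with the hypothesis that $F$ is of Kodaira type $III^*$, i.e.\ that the resolution of $\mathcal{P}$ produces a $\tilde{E}_7$ dual graph over the corresponding point of $\mathbb{P}^1$.

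The next step is to invoke the explicit construction of type $III^*$ Halphen pencils of index two with $B = 4L + Q$ from \cite{azconstr}*{Theorem 5.16} (together with the examples realizing the two geometric configurations (viii) a conic plus a line-with-multiplicity-four, and (ix) a line-with-multiplicity-four plus two other lines). In each case the incidence pattern of $L$ with $C$ and with $Q$ at the nine base points of $\mathcal{P}$ is prescribed: $L$ meets $C$ transversely at three distinct base points (or at two points of intersection multiplicities $1$ and $2$) and the tangency data is pinned down by the requirement that, after blowing up, the exceptional components assemble into the graph $\tilde{E}_7$ rather than $\tilde{E}_8$.

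The main obstacle — and the heart of the proof — is to check carefully that assuming either (i) or (ii) forces the Kodaira type over that base to degenerate from $III^*$ to a worse singular fiber. Concretely, if $L$ were an inflection line of $C$, then at the triple contact point the blow-ups needed to resolve the base locus would add an extra $(-2)$-curve to the $\tilde{E}_7$ chain, producing an $\tilde{E}_8$ (type $II^*$) fiber, contradicting $F \cong III^*$; indeed, the $II^*$ examples with $B = 4L + Q$ in Section \ref{seciistar} (notably Proposition \ref{line4uns}) are realized in precisely this way. A parallel analysis rules out (ii): enforced tangency of $L$ and $C$ at a point of $L \cap Q$ forces an extra infinitely near base point and again collapses the dual graph to $\tilde{E}_8$. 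These verifications can be done by choosing the normalized coordinates used in the relevant examples of \cite{azconstr} and tracking the strict transforms through the sequence of blow-ups.

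Having excluded both (i) and (ii), Lemma \ref{uns4line} cannot apply, contradicting our assumption that $\mathcal{P}$ is unstable. Hence $\mathcal{P}$ must be semistable, completing the proof.
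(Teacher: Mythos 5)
Your proposal is correct and follows essentially the same route as the paper: assume instability, apply Lemma \ref{uns4line} to reduce to the two tangency/inflection configurations for $L$ and $C$, and then rule both out for a type $III^*$ fiber by appealing to the explicit classification of these configurations in \cite{azconstr} (the paper cites Section 6 there for exactly this exclusion). The extra detail you supply about the dual graph degenerating from $\tilde{E}_7$ to $\tilde{E}_8$ is a reasonable elaboration of why the exclusion holds, but it is not a different argument.
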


\begin{proof}
If $\mathcal{P}$ were unstable, then $\mathcal{P}$ (and $B$) would be as in (i) or (ii) in Lemma \ref{uns4line}. In \cite{azconstr}*{Section 6} we show that this is not case for a fiber of type $III^*$.
\end{proof}

\begin{lema}
Let $\mathcal{P}$ be a Halphen pencil of index two containing a curve $B$ such that $B=3L+C'$, where $L$ is a line and $C'$ is a cubic (possibly reducible). Letting $2C$ denote the unique multiple cubic in $\mathcal{P}$ we have that if $\mathcal{P}$ is unstable, then either
\begin{enumerate}
\item $L$ is an inflection line of $C$  at a point where the intersection multiplicity of $L$ and $C'$ is $\geq 2$.
\item $L$ is tangent to $C$ at a point where the intersection multiplicity of $L$ and $C'$ is $3$.
\end{enumerate}
\label{uns3line}
\end{lema}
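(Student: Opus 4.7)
The plan is to parallel the proof of Lemma \ref{uns4line} and invoke Theorem \ref{unsreg1}, which classifies (up to choice of coordinates and generators) the proper unstable pencils in $\mathscr{P}_6$ containing a curve of the form $3L+C'$. Since every Halphen pencil of index two is proper, if $\mathcal{P}$ is unstable then one of the six cases (a)--(f) of Theorem \ref{unsreg1} holds, and the task reduces to translating each algebraic case into the geometric dichotomy (1)--(2).

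To set up the translation, choose coordinates so that $L=\{x=0\}$, write $f=x^3 q$ where $q=\sum q_{ij}x^iy^jz^{3-i-j}$ defines $C'$, and write $2C$ as $g=h^2$ where $h=\sum h_{ij}x^iy^jz^{3-i-j}$ defines $C$. Then $f_{i+3,\,j}=q_{ij}$, and each $g_{ij}$ is an explicit quadratic polynomial in the $h_{ij}$. At the candidate point $P=(0:0:1)$, the intersection multiplicities of $L$ with $C$ and with $C'$ equal the smallest $k$ such that $h_{0k}\neq 0$ and $q_{0k}\neq 0$, respectively. A direct computation with $g=h^2$ shows that $g_{00}=g_{02}=0$ forces $h_{00}=h_{01}=0$ (tangency of $L$ to $C$ at $P$), and additionally $g_{04}=0$ forces $h_{02}=0$ (inflection of $L$ at $P$); analogous identities relate the vanishing of the mixed coefficients $g_{1k}$ and $g_{20}$ to further vanishing of the $h_{ij}$.

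With this dictionary the case analysis proceeds mechanically. In cases (c) and (e) the $g$-conditions force $h_{00}=h_{01}=h_{02}=0$, while the $f$-conditions force $q_{00}=q_{01}=0$ (and moreover $q_{02}=0$ in case (e)); this yields $L$ inflectional to $C$ at $P$ with $L\cdot C'\geq 2$ at $P$, which is conclusion (1). In case (f) the $g$-conditions only force $h_{00}=h_{01}=0$, while the $f$-conditions force $q_{00}=q_{01}=q_{02}=0$, giving $L$ tangent to $C$ at $P$ with $L\cdot C'=3$ at $P$, which is conclusion (2) (or again conclusion (1) if moreover $h_{02}=0$). Cases (a), (b), and (d) each come with the ``in particular'' clause that $(0:0:1)$ has multiplicity $\geq 3$ in $C_g$, equivalently that $C$ is singular at $P$; these cases must be ruled out using the Halphen hypothesis, since a double point of $C$ on the line $L\subset B$ produces local intersection numbers incompatible with the base-point structure of a Halphen pencil of index two.

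The main obstacle is this final step: showing that a singular point of $C$ on $L$ cannot arise when $\mathcal{P}$ is Halphen of index two containing $B=3L+C'$. This amounts to a local intersection-theoretic computation at the base points of $\mathcal{P}$, using the constraint $B\cdot C=18$ on $\mathbb{P}^2$ together with the prescribed multiplicities dictated by the Halphen structure (and the earlier constructions in \cite{azconstr}). Once these three cases are excluded, the lemma follows directly from the mechanical dictionary established above.
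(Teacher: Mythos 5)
Your overall strategy coincides with the paper's: the printed proof of Lemma \ref{uns3line} is the single line ``It follows from Theorem \ref{unsreg1},'' and you are supplying the translation that this line elides. Your dictionary for cases (c), (e) and (f) is correct. (It is worth making explicit the point you use silently: since $f$ contains no monomial of $x$-degree less than $3$, the vanishing conditions on $g_{0j}$, $g_{1j}$, $g_{2j}$ are unchanged if $C_g$ is replaced by any other generator distinct from $B$, so one may indeed take $g=h^2$ with $h$ defining $C$; then $g_{00}=g_{02}=g_{04}=0$ forces $h_{00}=h_{01}=h_{02}=0$, and $g_{20}=0$ forces $h_{10}=0$, exactly as you claim.)

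The gap is precisely where you located it, and what you wrote does not close it. In cases (a), (b) and (d) of Theorem \ref{unsreg1} the conditions force $C$ to be singular at $P=(0:0:1)\in L$ while the intersection multiplicity $I_P(L,C')$ equals $0$, $1$ and $2$ respectively, so neither conclusion (1) nor (2) can be read off; these cases must be shown not to occur for a Halphen pencil of index two. Your proposed justification --- that a double point of $C$ on $L$ gives ``local intersection numbers incompatible with the base-point structure'' --- is only a gesture: the naive estimate gives, e.g. in case (a), $I_P(B,2C)=6\,I_P(L,C)\geq 18$, which is still below the total $B\cdot 2C=36$ and hence yields no immediate contradiction. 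To exclude these cases one must either analyze the cluster of (possibly infinitely near) base points of multiplicity two lying over $P$, or invoke the explicit constructions in \cite{azconstr} describing which curves $3L+C'$ arise from fibers and how they meet $C$; neither is carried out in your proposal, so the proof is incomplete as written. (The same issue is latent in the paper's one-line proof, and in the parallel Lemma \ref{uns4line} via case (a2) of Theorem \ref{unsreg}.)
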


\begin{proof}
It follows from Theorem \ref{unsreg1}.
\end{proof}

\begin{prop}
If $Y$ contains a fiber $F$ of type $III^*$ and $B\doteq \pi(F)$ contains a triple line,
then $\mathcal{P}$ is semistable.
\label{iiistar3line}
\end{prop}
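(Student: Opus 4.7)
The plan is to argue by contradiction, mirroring the structure of the proof of Proposition \ref{iiistar4line}. Suppose $\mathcal{P}$ were unstable. Since by hypothesis $B\doteq \pi(F)$ contains a triple line, we can write $B=3L+C'$ where $L$ is a line and $C'$ is a cubic (possibly reducible). By Lemma \ref{uns3line}, the unstability of $\mathcal{P}$ forces one of the two geometric configurations $(1)$ or $(2)$ of the lemma to hold, namely: $(1)$ $L$ is an inflection line of the cubic $C$ at a point where $L$ and $C'$ meet with multiplicity at least two, or $(2)$ $L$ is tangent to $C$ at a point where $L$ and $C'$ meet with multiplicity exactly three.

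The next step is to consult \cite{azconstr}*{Theorem 5.16}, which lists the possible decompositions of $B$ when $F$ is of type $III^*$. The cases in which $B$ contains a triple line are precisely items (iv) two triple lines, (v) a triple line, a double line and another line, (vi) a triple line, a conic and a line, and (vii) a triple line and a cubic. For each of these, the explicit constructions carried out in \cite{azconstr}*{Section 6} describe how $L$ sits with respect to the unique multiple cubic $C$ of the Halphen pencil and with respect to the residual curve $C'$. In particular, one reads off from these constructions the tangency profile of $L$ against $C$ (whether $L$ is an inflection line, a simple tangent, or transverse) and the intersection multiplicities of $L$ with $C'$ at each point.

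The task then is to verify that in each of the four cases above, neither of the configurations $(1)$ nor $(2)$ of Lemma \ref{uns3line} can occur. For instance, when $B$ consists of two triple lines or of a triple line together with a concurrent double line, three of the four intersection points of $L$ with $C'$ collapse onto base points that force a different tangency behavior of $L$ along $C$ than what $(1)$ or $(2)$ require; in the remaining cases the explicit coordinates from \cite{azconstr}*{Section 6} exhibit $L$ as transverse to $C$ or tangent at a point disjoint from the required intersection with $C'$.

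The main obstacle is the bookkeeping of the four subcases together with the possible degenerations of $C'$ (smooth, nodal, cuspidal, or reducible). This is not a deep step but requires keeping track of how the configuration of the nine base points of $\mathcal{P}$ on $C$ constrains the contact order of $L$ with $C$ at each candidate point. Once these geometric incidences are collected from \cite{azconstr}*{Section 6}, the verification that $(1)$ and $(2)$ both fail is immediate in each case, producing the desired contradiction and showing that $\mathcal{P}$ must be semistable.
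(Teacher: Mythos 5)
Your proposal is correct and follows essentially the same route as the paper: assume $\mathcal{P}$ is unstable, invoke Lemma \ref{uns3line} to force one of the two tangency configurations of $L$ against $C$ and $C'$, and then rule both out using the explicit constructions for type $III^*$ fibers in \cite{azconstr}*{Section 6}. The extra case-by-case bookkeeping you describe is just an expansion of the same verification the paper delegates to that reference.
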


\begin{proof}
If $\mathcal{P}$ were unstable, then $\mathcal{P}$ (and $B$) would be as in (i) or (ii) in Lemma \ref{uns3line}. In \cite{azconstr}*{Section 6} we show that this is not case for a fiber of type $III^*$.
\end{proof}

\begin{prop}
If $Y$ contains a fiber $F$ of type $III^*$ and $B\doteq \pi(F)$ consists of a double line, a cubic and another line,
then $\mathcal{P}$ is semistable.
\label{iiistar2line}
\end{prop}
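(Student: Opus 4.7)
The plan is to argue by contradiction, closely following the pattern established for Propositions \ref{iiistar4line} and \ref{iiistar3line}. Suppose $\mathcal{P}$ is unstable. Write the fiber as $B = 2L + L' + C'$ and regard it as $2L + Q$, where $Q \doteq L' + C'$ is a (reducible) quartic. Then Theorem \ref{unsreg2} applies directly and produces coordinates $[x,y,z]$ on $\mathbb{P}^2$ together with generators $C_f = B$ and $C_g = 2C$ such that $L = \{x = 0\}$, $f$ is supported on the listed monomials, and the point $P = (0:0:1)$ has multiplicity $\geq 3$ on $C_g = 2C$. Since $\mathrm{mult}_P(2C) = 2\,\mathrm{mult}_P(C)$, this forces $\mathrm{mult}_P(C) \geq 2$, so $C$ has a singular point lying on the double line $L$.

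The next step is to promote this observation to a geometric lemma analogous to Lemmas \ref{uns4line} and \ref{uns3line}: unstability of a Halphen pencil of index two containing a curve of the form $2L + Q$ forces the double line $L$ to pass through a singularity of $C$ (in fact, to make that point a common singularity of both generators with the multiplicities dictated by Theorem \ref{unsreg2}). This step is essentially a rephrasing of the conclusion above into the intrinsic language of $(L, C)$, independent of the choice of coordinates.

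Finally, I would appeal to the explicit construction of fibers of type $III^*$ carried out in \cite{azconstr}*{Section 6}. There, one lists every way in which a plane curve of the form $2L + L' + C'$ can be realized as $\pi(F)$ with $F$ of type $III^*$, together with the associated cubic $C$. Inspecting each case, one checks that $C$ is never singular at a point of $L$: either $C$ is smooth, or its node/cusp lies off $L$. This contradicts the conclusion of the first step, and hence $\mathcal{P}$ must be semistable.

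The main obstacle is the case analysis in the last step: one has to traverse every realization of a $III^*$ fiber whose image has the form $2L + L' + C'$ and verify the non-incidence of $\mathrm{Sing}(C)$ with $L$. Because the unstability criterion from Theorem \ref{unsreg2} only constrains the pair $(L, C)$ (and not the additional components $L'$ and $C'$ of $B$ directly), the check must be carried out for every admissible distribution of the nine base points that yields a fiber of type $III^*$ with the prescribed reducible structure.
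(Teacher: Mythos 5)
Your overall skeleton (argue by contradiction and feed $B=2L+Q$ into Theorem \ref{unsreg2}) is the paper's, but the pivotal step of your argument does not follow from that theorem. Theorem \ref{unsreg2} only asserts the existence of \emph{some} pair of generators $C_f$, $C_g$ with the stated properties; it does not say that $C_g=2C$. Moreover, the condition ``$(0:0:1)$ has multiplicity $\geq 3$ in $C_g$'' does not propagate to the other members of the pencil: since $f$ is allowed to have $f_{20}\neq 0$, the generator $C_f$ can have multiplicity exactly $2$ at $(0:0:1)$, and then $2C=\alpha C_f+\beta C_g$ with $\alpha\neq 0$ again has multiplicity exactly $2$ there. (Contrast this with Lemmas \ref{uns4line} and \ref{uns3line}, where the vanishing conditions on $g$ concern monomials that are absent from $f$'s span, so they do transfer to every member, in particular to $2C$.) So your conclusion that $C$ is singular at a point of $L$ is not justified; the triple point could sit on a different, possibly reduced, member of the pencil, and your proposed case-by-case inspection of $\mathrm{Sing}(C)$ for the $III^*$ realizations would not detect that.

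The contradiction is in fact immediate and requires neither the identification $C_g=2C$ nor any traversal of the $III^*$ configurations from \cite{azconstr}*{Section 6} — which is consistent with the paper's proof citing only Theorem \ref{unsreg2}, unlike Propositions \ref{iiistar4line} and \ref{iiistar3line}. Indeed, $f$ is divisible by $x^2$ and $C_g$ has multiplicity $\geq 3$ at $(0:0:1)$, so both generators vanish there and $(0:0:1)$ is a base point of $\mathcal{P}$. But for a Halphen pencil of index two every member $D$ satisfies $36=D\cdot D'=2\sum_{i}m_{P_i}(D)$ against a general member $D'$, with $m_{P_i}(D)\geq 2$ at each of the nine base points, forcing $m_{P_i}(D)=2$ for all $i$. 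Hence no member — in particular no generator $C_g$ — can have multiplicity $\geq 3$ at a base point, contradicting the necessary condition of Theorem \ref{unsreg2}. You should replace your second and third steps with this observation.
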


\begin{proof}
It follows from Theorem \ref{unsreg2}.
\end{proof}

\subsection{The stability of $\mathcal{P}$ when $F$ is of type $IV^*$}
\label{secivstar}

Finally, we describe the stability of $\mathcal{P}$ when $F$ is of type $IV^*$. We will show that either $\mathcal{P}$ is stable or $C$ is singular and $B$ is semistable, in which case we are in the situation of Theorem \ref{2sssbad}.

\begin{lema}
Let $\mathcal{P}$ be a Halphen pencil of index two containing a curve $B$ such that $B=3L+C'$, where $L$ is a line and $C'$ is a cubic (possibly reducible). Letting $2C$ denote the unique multiple cubic in $\mathcal{P}$ we have that if $\mathcal{P}$ is not stable, then either
\begin{enumerate}[(i)]
\item $L$ is an inflection line of $C$ or 
\item $L$ is tangent to $C$ at a point where $L$ and $C'$ also intersect or
\item there is a base point where $L$ and $C$ intersect and where the intersection multiplicity of $L$ and $C'$ is $3$ 
\end{enumerate}
\label{nstripleline}
\end{lema}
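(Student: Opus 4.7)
The plan is to deduce the conclusion directly from Theorem \ref{nsreg}. Since $\mathcal{P}$ is a Halphen pencil of index two its base locus is finite, so $\mathcal{P}$ is a proper pencil; as we are assuming $\mathcal{P}$ is not stable and it contains the curve $B=3L+C'$, Theorem \ref{nsreg} provides coordinates $[x,y,z]$ on $\mathbb{P}^2$ and generators $C_f,C_g$ of $\mathcal{P}$ satisfying one of the four cases (a)--(d) of that theorem. By changing basis in $\mathcal{P}$ we may assume $f$ is the defining polynomial of $B$, with $L=\{x=0\}$ and residual cubic factor $C'$, and that $g=c^2$ is the defining polynomial of $2C$, writing $c=\sum c_{ij}x^iy^jz^{3-i-j}$. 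Set $P=(0:0:1)$.

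I would next translate each list of vanishing conditions on $g$ into vanishing conditions on $c$. Expanding $g_{ij}=\sum_{(i_1,j_1)+(i_2,j_2)=(i,j)}c_{i_1j_1}c_{i_2j_2}$ and processing the equations in the order given by Theorem \ref{nsreg}, the first relation $g_{00}=c_{00}^2=0$ forces $c_{00}=0$, and each subsequent $g_{ij}=0$, after substituting the previously enforced vanishings, takes the form $c_{pq}^2=0$ and hence yields a new $c_{pq}=0$. The resulting vanishing patterns read off geometrically at $P$: the conditions $c_{00}=c_{01}=c_{02}=0$ give $L\cdot C=3P$, i.e.\ $L$ is an inflection line of $C$ (alternative (i)); the weaker $c_{00}=c_{01}=0$ say $L$ is tangent to $C$ at $P$; and $c_{00}=0$ alone says $P\in L\cap C$. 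In parallel, the case (a)--(d) of Theorem \ref{nsreg} that one is in is determined by which of $f_{30},f_{31},f_{32},f_{33}$ is nonzero, i.e.\ by the intersection multiplicity of $L$ and $C'$ at $P$ (ranging from $0$ in case (a) to $3$ in case (d)).

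Combining the two pieces of information matches each subcase of Theorem \ref{nsreg} with one of the three alternatives of the lemma: when $L\cdot C=3P$ we are in (i); when $L$ is only tangent to $C$ at $P$ and the case assumption gives $P\in L\cap C'$ we are in (ii); and when the vanishings of $g$ yield just $P\in C$ while those of $f$ yield $L\cdot C'=3P$, the point $P$ becomes a base point of $\mathcal{P}$ and we are in (iii). The main obstacle is the extra Pl\"{u}cker vanishings $m_{ijkl}=0$ that appear in subcases (c2), (d2) and (d3) of Theorem \ref{nsreg}: these do not reduce directly to vanishings of the $c_{pq}$ but, combined with the vanishings already derived, produce vanishings of certain $f_{ij}$, i.e.\ of coefficients of $C'$, and it is precisely these that force the condition $\mathrm{mult}_P(L\cdot C')=3$ appearing in alternative (iii). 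Verifying that every subcase lands in (i), (ii) or (iii) is then a finite bookkeeping exercise.
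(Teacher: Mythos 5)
Your proposal is correct and follows essentially the same route as the paper, whose entire proof of this lemma is the single line ``It follows from Theorem \ref{nsreg}''; you have simply made the translation explicit, and your bookkeeping does land every subcase (a1)--(d3) of that theorem in one of the alternatives (i)--(iii), since the conditions $g_{00}=\cdots=g_{04}=0$, $g_{00}=\cdots=g_{03}=0$ and $g_{00}=0$ on $g=c^2$ force $c_{00}=c_{01}=c_{02}=0$, $c_{00}=c_{01}=0$ and $c_{00}=0$ respectively. One small inaccuracy: in the subcase producing alternative (iii) the condition $\mathrm{mult}_P(L\cdot C')=3$ comes from the shape of $f$ in case (d) of Theorem \ref{nsreg} (namely $f_{30}=f_{31}=f_{32}=0$ and $f_{33}\neq 0$), not from the extra Pl\"{u}cker vanishings $m_{ijkl}=0$, which are in fact not needed to conclude the lemma.
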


\begin{proof}
It follows from Theorem \ref{nsreg}.
\end{proof}

\begin{prop}
If $Y$ contains a fiber $F$ of type $IV^*$ and $B\doteq \pi(F)$ contains a triple line,
then $\mathcal{P}$ is stable.
\label{ivstartripleline}
\end{prop}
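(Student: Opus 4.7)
The plan is to argue by contradiction via Lemma \ref{nstripleline}. Suppose $\mathcal{P}$ is not stable. Since $B = 3L + C'$ contains a triple line, Lemma \ref{nstripleline} forces one of the three geometric conditions (i), (ii), or (iii) to hold, where $2C$ is the unique multiple cubic in $\mathcal{P}$. So the proof reduces to showing that when the fiber $F$ of $Y$ associated to $B$ is of type $IV^*$, none of (i)-(iii) can occur. This is exactly the strategy used for the analogous semistability statements in the $III^*$ case (Propositions \ref{iiistar3line} and \ref{iiistar4line}), except that here we must rule out the strictly semistable configurations as well, which is why we invoke the ``not stable'' lemma rather than its ``unstable'' counterpart.

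The first step is to recall from \cite{azconstr}*{Section 6} the explicit list of configurations $(B,C)$, with $B = 3L + C'$, that give rise (after resolving the nine base points) to a rational elliptic surface of index two with a fiber of type $IV^*$. In each entry on this list the intersection data of $L$ with $C$ and of $L$ with $C'$ at the nine base points is recorded, since this data is precisely what controls the dual graph $\tilde{E}_6$ of the resulting singular fiber.

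The second step is a case-by-case check. For each configuration on the list, read off the intersection multiplicities $(L \cdot C)_P$ and $(L \cdot C')_P$ at the base points $P$ lying on $L$. The combinatorics of $\tilde{E}_6$ (more precisely, the requirement that the strict transform of $3L$ contribute the correct chain of $(-2)$-curves meeting with the prescribed multiplicities, together with the fact that the $(-2)$-curve configuration for $IV^*$ is more symmetric and less ``spread out'' than for $III^*$) forces $L$ to meet $C$ transversely at ordinary points, and to meet $C'$ with multiplicity at most $2$ at each base point on $L$. This rules out (i), since an inflection line would require $(L \cdot C)_P = 3$ at some smooth point $P$; it rules out (ii), since a tangency of $L$ to $C$ combined with $L$ meeting $C'$ at the same point is incompatible with the $IV^*$ numerics; and it rules out (iii), since $(L \cdot C')_P = 3$ at a base point on $C$ is likewise excluded.

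The main obstacle will be the bookkeeping: the list in \cite{azconstr}*{Section 6} splits into several subcases according to the shape of $C'$ (smooth cubic, conic plus a line, three lines, etc.) and the position of the possibly infinitely-near base points, and each must be inspected individually. Fortunately, the rigidity imposed by the $\tilde{E}_6$ dual graph drastically limits the admissible intersection profiles, so in practice each subcase is disposed of by a short local computation already implicit in the constructions of \cite{azconstr}*{Section 6}. Having exhausted the list, we conclude that none of (i)-(iii) can hold, contradicting the assumption that $\mathcal{P}$ is not stable. Hence $\mathcal{P}$ is stable.
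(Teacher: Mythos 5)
Your proposal is correct and follows essentially the same route as the paper: the paper's proof likewise argues that if $\mathcal{P}$ were not stable then $B$ would satisfy one of conditions (i)--(iii) of Lemma \ref{nstripleline}, and then cites \cite{azconstr}*{Section 6} to rule out each of these for a fiber of type $IV^*$. The additional detail you supply about the case-by-case verification is consistent with, and merely expands on, the citation the paper relies on.
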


\begin{proof}
If $\mathcal{P}$ were not stable, then $\mathcal{P}$ (and $B$) would be as in (i),(ii) or (iii) in Lemma \ref{nstripleline}. In \cite{azconstr}*{Section 6} we show that this is not case for a fiber of type $IV^*$.
\end{proof}

\begin{lema}
Let $\mathcal{P}$ be a Halphen pencil of index two containing a curve $B$ such that $B=2L+Q$, where $L$ is a line and $Q$ is a quartic (possibly reducible). Letting $2C$ denote the unique multiple cubic in $\mathcal{P}$ we have that if $\mathcal{P}$ is not stable, then the intersection multiplicity of $L$ and $Q$ at some base point is $4$.
\label{nsdoubleline}
\end{lema}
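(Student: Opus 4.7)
The plan is to read off the conclusion from Theorem \ref{nsreg1}, using the Halphen hypothesis $2C\in\mathcal{P}$ to single out its case (e). Since $\mathcal{P}$ is a proper pencil containing $B=2L+Q$ and is not stable, Theorem \ref{nsreg1} yields coordinates $[x:y:z]$ on $\mathbb{P}^2$ and generators $C_f,C_g$ of $\mathcal{P}$ fitting one of its five cases (a)--(e). I would take $C_f = B$, so that $L=\{x=0\}$ and $f = x^2\tilde Q(x,y,z)$ with $\tilde Q$ a quartic defining $Q$; the labels (a), (b), (c), (d), (e) then correspond to the smallest index $j \in \{0,1,2,3,4\}$ with $f_{2j}\neq 0$.

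The first step is a direct geometric dictionary. Since $f_{2j}$ equals the coefficient of $y^jz^{4-j}$ in $\tilde Q$, the condition of case ${*}$ says $\tilde Q|_L = \tilde Q(0,y,z) = f_{2j}\,y^jz^{4-j} + (\text{higher order in }y)$, i.e.\ $I_{(0:0:1)}(L,Q) = j$. Moreover, in each case the vanishing conditions on $g$ include $g_{00} = 0$, while $f_{00} = 0$ is automatic from $f = x^2\tilde Q$, so $(0{:}0{:}1)$ is a base point of $\mathcal{P}$ throughout. In case (e) we therefore have $I_{(0:0:1)}(L,Q) = 4$ at a base point, and the lemma follows at once.

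The key remaining step is to exclude cases (a)--(d) using the Halphen hypothesis. The idea is to exploit $2C\in\mathcal{P}$: writing $2C = \alpha f + \beta g$ and restricting to $L=\{x=0\}$ gives $\beta\, g(0,y,z) = C(0,y,z)^2$, so $g|_L$ must be a perfect square of a cubic. Combining this with the explicit monomial form that Theorem \ref{nsreg1} imposes on $g$ in each of cases (a)--(d), together with the bookkeeping that the generic fiber of $Y$ meets $B$ in nine base points each of multiplicity two (equivalently, $B\cdot 2C = 36$ distributed over nine base points), I expect to rule out each of cases (a)--(d), leaving only case (e).

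The main obstacle is carrying out this elimination case by case, in particular checking that the ``missing'' intersections of $L$ with $Q$ in cases (a)--(d) cannot be absorbed by additional base points on $L\setminus\{(0{:}0{:}1)\}$. The cleanest way to settle this is likely to invoke the explicit configuration data for reducible fibers of rational elliptic surfaces of index two worked out in \cite{azconstr}*{Section 6}, which constrains how $L$ and $Q$ may meet at base points of $\mathcal{P}$, rather than to grind directly through the defining equations of the perfect-square cubic $C$ in each case.
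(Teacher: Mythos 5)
Your reading of Theorem \ref{nsreg1} is the right one, and your case~(e) dictionary is correct: with $C_f=B$, $L=\{x=0\}$ and $f=x^2\tilde Q$, the label of the case is the least $j$ with $f_{2j}\neq 0$, which equals $I_{(0:0:1)}(L,Q)$, and $(0{:}0{:}1)$ is a base point since $f_{00}=g_{00}=0$ throughout. You are also right --- and this is a point the paper's one-line proof (``It follows from Theorem \ref{nsreg1}'') passes over in silence --- that cases (a)--(d) genuinely must be excluded: in case (d), for instance, $\tilde Q(0,y,z)=f_{23}y^3z+f_{24}y^4$, so $L$ meets $Q$ with multiplicities $3$ and $1$ and the conclusion of the lemma would simply be false. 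The gap is that you do not actually exclude them: the elimination is announced (``I expect to rule out each of cases (a)--(d)'') rather than carried out, and the route you sketch is doubtful. The perfect-square constraint $\beta\, g|_L=c(0,y,z)^2$ does not bite in case (a), where $g|_L=g_{06}y^6$ is already a perfect square; and falling back on the configuration data of \cite{azconstr}*{Section 6} imports exactly the kind of case analysis the lemma is meant to package.

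There is a much shorter argument that closes the gap using only the ``in particular'' clauses of Theorem \ref{nsreg1}. Every member of a Halphen pencil of index two has multiplicity exactly $2$ at each of the nine base points: on the resolution $Y$ the strict transform of any member is a full fiber, of class $6H-2\mathcal{E}_1-\cdots-2\mathcal{E}_9$, and these classes form a basis of $\operatorname{Pic}(Y)$. (Equivalently, $\sum m_i=18$ and $\sum m_i^2=36$ over nine points forces all $m_i=2$.) But in cases (c) and (d) the theorem records that $(0{:}0{:}1)$ --- a base point, as you noted --- has multiplicity $\geq 3$ on $C_g$, and in cases (a) and (b) the stated vanishing of the $g_{ij}$ forces multiplicity $6$, respectively $\geq 4$, there. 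All four cases therefore contradict the multiplicity-two property, leaving only case (e), where your computation $I_{(0:0:1)}(L,Q)=4$ finishes the proof. With this substitution your argument is complete and, if anything, more informative than the proof in the paper.
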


\begin{proof}
It follows from Theorem \ref{nsreg1}.
\end{proof}

\begin{thm}
If $Y$ contains a fiber of type $IV^*$ and $\mathcal{P}$ is not stable, then $C$ is singular and $B$ is semistable.
\label{nsivstar}
\end{thm}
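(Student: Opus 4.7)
The plan is to establish the two conclusions separately: first that $B$ must be semistable, and then, using this, that $C$ must be singular.

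For the conclusion that $C$ is singular, I would argue by contradiction using Corollary~\ref{badfiberstable}. If $C$ were smooth and (by the other half of the theorem) $B$ semistable, then since $F$ is of type $IV^*$, Corollary~\ref{badfiberstable} would force $\mathcal{P}$ to be stable, contradicting the hypothesis. So this half is essentially free once the semistability of $B$ is in hand.

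For the conclusion that $B$ is semistable, the strategy is again by contradiction. Assume $B$ is unstable. By the classification from \cite{azconstr} of possible realizations of $B$ when $F$ is of type $IV^*$, any unstable realization must either contain a triple line, contain a line of multiplicity $\geq 4$, or fit the form $B = 2L + Q$ with $L$ a line and $Q$ a quartic. The triple-line case is ruled out immediately by Proposition~\ref{ivstartripleline}, which asserts $\mathcal{P}$ is stable in that situation. The higher-multiplicity cases reduce either to Lemma~\ref{m6} (when a base point of multiplicity $6$ is forced, which is automatic for a line of multiplicity $\geq 4$ in a sextic meeting $C$ appropriately) or to the explicit unstable equations of Theorem~\ref{unsreg}, none of which are compatible with the existence of a $IV^*$ fiber as built in \cite{azconstr}*{Section 6}. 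For $B = 2L + Q$, Lemma~\ref{nsdoubleline} forces $L$ and $Q$ to intersect with multiplicity $4$ at some base point, and one then needs to verify that this cannot occur for any $IV^*$ realization.

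The main obstacle I anticipate is precisely this last subcase, $B = 2L + Q$: one must examine each $IV^*$ configuration of this form appearing in the classification and check that the intersection multiplicity of $L$ and $Q$ at every base point is at most $3$. This parallels the verifications carried out in Propositions~\ref{iiistar4line}, \ref{iiistar3line}, and~\ref{iiistar2line} for the $III^*$ case, but now exploits the distinct constraints that the $\tilde{E}_6$ dual graph of $IV^*$ places on the arrangement of lines and quartics and on the chain of infinitely-near base points along $L$; the expected contradiction is that an intersection of order $4$ between $L$ and $Q$ at a base point would force the resolution to produce either a longer $\tilde{E}_n$ chain or a non-$IV^*$ configuration, which is exactly what the section-by-section analysis in \cite{azconstr} was designed to exclude.
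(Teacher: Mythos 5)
Your proposal follows essentially the same route as the paper: Corollary \ref{badfiberstable} supplies the dichotomy ($C$ singular or $B$ unstable), Proposition \ref{ivstartripleline} eliminates the triple-line realizations, Lemma \ref{nsdoubleline} rules out the remaining unstable realizations of the form $2L+Q$, and Corollary \ref{badfiberstable} applied once more yields that $C$ is singular. The final verification you flag as the main obstacle is handled in the paper simply by citing the two explicit $IV^*$ configurations of this form (a double line, a nodal cubic and another line, and two double lines and a conic) from \cite{azconstr}*{Examples 7.35 and 7.38}, where the relevant intersection multiplicities can be read off directly.
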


\begin{proof}
If $\mathcal{P}$ is not stable, then it follows from Corollary \ref{badfiberstable} that either $C$ is singular or $B$ is unstable. Now, if $B$ is unstable, then the results from \cite{azconstr}*{Sections 6 and 7} and \cite{shah}*{Section 2} together with Theorem \ref{ivstartripleline} tell us $B$ can only (possibly) be
\begin{enumerate}
\item a double line, a nodal cubic and another line (\cite{azconstr}*{Example 7.35}) or
\item two double lines and a conic (\cite{azconstr}*{Example 7.38})
\end{enumerate}
but in any case $\mathcal{P}$ is stable by Lemma \ref{nsdoubleline}.
\end{proof}

\section*{Acknowledgments}
I thank my advisor, Antonella Grassi, for the many helpful discussions and the numerous suggestions on earlier versions of this paper. I also would like to thank Eduardo Esteves for pointing out the reference on stability of foliations. This work is part of my PhD thesis and it was partially supported by a Dissertation Completion Fellowship at the University of Pennsylvania.

\bibliography{references}

\end{document}